\documentclass[10pt,preprint]{amsart}
\usepackage{amssymb,amsfonts,amsthm,amsmath,amscd,relsize}
\usepackage{hyperref}
\usepackage{enumerate}
\usepackage{comment}

 \usepackage{epsfig}
 \usepackage{graphicx}
 \usepackage{lipsum}
\usepackage{xcolor}
\usepackage{float}
\usepackage{soul}
\usepackage{adjustbox}

\usepackage{enumitem}

\usepackage{pgf,tikz,pgfplots}
\usepackage{xcolor}
\pgfplotsset{compat=1.13}
\usepackage{mathrsfs}
\usetikzlibrary{arrows}

\definecolor{qqqqff}{rgb}{0,0,1}
\definecolor{qqwuqq}{rgb}{0,0.39215686274509803,0}
\definecolor{uuuuuu}{rgb}{0.266,0.266,0.266}

\def\id{\operatorname{id}}
\def\N{\mathbb N}
\def\Z{\mathbb Z}

\def\a{\alpha}
\def\g{q}
\def\v{\varphi}
\def\k{\kappa}

\theoremstyle{plain}
\newtheorem{theorem}{Theorem}
\newtheorem*{Oz_1}{Probl\`eme XXIII}
\newtheorem*{Oz_2}{Probl\`eme XXVII}
\newtheorem{proposition}{Proposition}
\newtheorem{lemma}{Lemma}
\newtheorem{corollary}{Corollary}
\newtheorem*{RT}{Rosenberger's Theorem}

\theoremstyle{definition}
\newtheorem{definition}{Definition}

\theoremstyle{remark}
\newtheorem{remark}{Remark}
\newtheorem{example}{Example}
\newtheorem*{notation}{Notation}
\newtheorem*{thks}{Thanks}

\makeatletter
\@addtoreset{footnote}{page}
\makeatother

\textheight 195mm \textwidth 125mm

\let\svthefootnote\thefootnote

\begin{document}

\title{Lattice Equable Quadrilaterals I - Parallelograms}

\author{Christian Aebi and Grant Cairns$^*$}\let\thefootnote\relax\footnote{$*$ corresponding author}
\addtocounter{footnote}{-1}\let\thefootnote\svthefootnote

\address{Coll\`ege Calvin, Geneva, Switzerland 1211}
\email{christian.aebi@edu.ge.ch}
\address{Department of Mathematics, La Trobe University, Melbourne, Australia 3086}
\email{G.Cairns@latrobe.edu.au}

\begin{abstract}
This paper studies equable parallelograms whose vertices lie on the integer lattice. Using Rosenberger's Theorem on generalised Markov equations, we show that the g.c.d.~of the side lengths of such parallelograms can only be 3, 4 or 5, and in each of these cases the set of parallelograms naturally forms an infinite tree all of whose vertices  have degree 4, bar the root. The paper then focuses on what we call Pythagorean  equable parallelograms. These are lattice equable parallelograms whose complement in a circumscribing rectangle consists of two Pythagorean triangles. We prove that for these parallelograms the shortest side can only be  3, 4,  5, 6 or 10, and there are five infinite families of such parallelograms,  given by solutions to corresponding Pell-like equations. 
\end{abstract}

\maketitle

\section{Introduction} 
A polygon
 with integer sides is said to be \emph{equable} if its perimeter equals its area. Equable  polygons have  fascinated recreational mathematical amateurs at least as far back as the 17th century \cite{JO}:\

\begin{Oz_1}  D\'ecrire un triangle rectangle, dont l'aire soit en nombres \'egale au contour.\footnote{Describe a right triangle having the same area as perimeter.}
\end{Oz_1}

\begin{Oz_2} D\'ecrire  un parall\'elogramme rectangle, dont l'aire soit en nombres \'egale au contour. \footnote{Describe a rectangle having the same area as perimeter.}
\end{Oz_2}

All of the integer solutions given by Ozanam can easily be demonstrated in high school today, since they  rely merely on the ability of  \emph{completing a rectangle}, instead of the usual \emph{square}.

(a) For the Pythagorean triangle with side lengths $a<b<\sqrt{a^2+b^2}$ we have
 \begin{align*}
   \frac{1}{2}ab  =a+b+\sqrt{a^2+b^2} &\implies 0=(ab -2a-2b)^2-4(a^2+b^2)\\
   &\hskip1.15cm = ab(ab-4a-4b+8)\\
   & \implies (a-4)(b-4) =8, 
\end{align*}
and hence $(a,b)=(5,12)$ and $(6,8)$ are the only two possibilities.

(b)  For a rectangle with integer side lengths $a,b$  with $a\le b$ we have
  \[
 ab=2(a+b) \iff 0=ab -2a-2b = (a-2)(b-2) - 4 \iff (a-2)(b-2)=4,
  \]
giving $(a,b)=(4,4)$ and $(3,6)$.

The striking feature of equable  integer sided triangles is that apart from the two Pythagorean triangles given above, there are only 3 other possibilities; see the Appendix.
In this paper we study equable parallelograms. Unlike  equable  triangles,  at first sight equable parallelograms are sadly disappointing, as they are just far too common. Indeed, it is easier to itemise the non-equable cases.

\begin{proposition}\label{P:exist} If $a,b$ are positive integers with $a\le b$, then there is an equable parallelogram with sides $a,b$ unless  one of the following holds: 
\begin{enumerate}
\item[\rm(a\rm)]  $a=1$ or $2$, and $b$ arbitrary with $a\le b$.
\item[\rm(b\rm)] $a=3$ and $b=3,4$ or $ 5$.
\end{enumerate}
\end{proposition}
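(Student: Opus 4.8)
The plan is to reduce the existence question to a single inequality in $a$ and $b$. A nondegenerate parallelogram with side lengths $a$ and $b$ and included angle $\theta\in(0,\pi)$ has perimeter $2(a+b)$ and area $ab\sin\theta$, so it is equable precisely when
\[
ab\sin\theta=2(a+b),\qquad\text{i.e.}\qquad \sin\theta=\frac{2(a+b)}{ab}.
\]
Since $a,b>0$ the right-hand side is positive, so an admissible angle $\theta$ exists if and only if this quantity is at most $1$. Thus an equable parallelogram with sides $a,b$ exists iff $2(a+b)\le ab$, which by the same completing-the-rectangle move as in Ozanam's part (b) is equivalent to $(a-2)(b-2)\ge 4$. (Equality gives $\theta=\pi/2$, i.e.\ an equable rectangle, which is still an admissible parallelogram; this is why the inequality should be taken non-strict.)

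It then remains to determine, for integers $1\le a\le b$, exactly when $(a-2)(b-2)<4$, and I would do this by a short case analysis on $a$. For $a=1$ we get $(a-2)(b-2)=2-b\le 1$, and for $a=2$ we get $(a-2)(b-2)=0$; in both cases the product is below $4$ for every $b$, yielding exception (a). For $a=3$ the product equals $b-2$, which is less than $4$ exactly when $b\in\{3,4,5\}$ (recalling $b\ge a=3$), yielding exception (b), while $b\ge 6$ produces a genuine equable parallelogram (the $3\times 6$ rectangle when $b=6$). Finally, if $a\ge 4$ and $b\ge a$ then $(a-2)(b-2)\ge 2\cdot 2=4$, so an equable parallelogram always exists. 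These cases are exhaustive and disjoint, so the exceptions are exactly those in (a) and (b).

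I do not expect a serious obstacle: the whole argument rests on the elementary fact that the area of a parallelogram with fixed sides can be tuned continuously through the angle, so that the only constraint is the upper bound $ab\ge 2(a+b)$. The only points demanding care are the boundary case $\sin\theta=1$ (so that the non-strict inequality, hence the $a=3,\,b=6$ rectangle, is classified correctly) and verifying that the case split on $a$ is complete.
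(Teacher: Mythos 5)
Your proof is correct and takes essentially the same approach as the paper: both reduce existence to the single inequality $ab\ge 2(a+b)$, equivalently $(a-2)(b-2)\ge 4$, and then enumerate the failing integer pairs to obtain exactly exceptions (a) and (b). The only cosmetic difference is that you solve $\sin\theta=\frac{2(a+b)}{ab}$ explicitly, whereas the paper shears a rectangle and invokes continuity of the area; your version makes the same idea slightly more direct, and you handle the boundary case $\sin\theta=1$ (the equable rectangles, such as $3\times 6$) correctly with the non-strict inequality.
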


\begin{proof}
Repeating the argument we used above for Ozanam's Probl\`eme XXVII, the area is less than the perimeter when
\[
ab<2(a+b) \iff 4 > (a-2)(b-2),
\]
and this condition is satisfied for precisely those values of $a,b$ itemised in the possible conditions of the proposition. So in these cases, there is no equable parallelogram with side lengths $a,b$. Conversely, if  $ab\ge 2(a+b)$, start with the rectangle with side lengths $a,b$, and hence area $ab$, and gradually push the parallelogram over, while maintaining its side lengths, as in Figure \ref{F:shear}. Ultimately, when the parallelogram becomes flat,  the area is zero. So by continuity, somewhere in the process the area equals $2(a+b)$ and the parallelogram is equable.
\end{proof}

\begin{figure}[H]
\begin{tikzpicture}[scale=.6][line cap=round,line join=round,>=triangle 45,x=1cm,y=1cm]
\draw[line width=0.8pt,color=qqwuqq,fill=qqwuqq,fill opacity=0.1] (2.73,0) -- (2.73,0.2703485445925461) -- (3,0.2703485445925461) -- (3,0) -- cycle; 
\draw [line width=0.8pt,color=qqqqff] (0,0)-- (0,4);
\draw [line width=0.8pt,color=qqqqff] (0,4)-- (3,4);
\draw [line width=0.8pt,color=qqqqff] (3,4)-- (3,0);
\draw [line width=0.8pt,color=qqqqff] (3,0)-- (0,0);
\draw [->,line width=0.8pt] (4.5,1) -- (5.5,1);
\draw [line width=0.8pt,color=qqqqff] (6,0)-- (6+2.64575,3);
\draw [line width=0.8pt,color=qqqqff] (6+2.64575,3)-- (9+2.64575,3);
\draw [line width=0.8pt,color=qqqqff] (9+2.64575,3)-- (9,0);
\draw [line width=0.8pt,color=qqqqff] (6,0)-- (9,0);
\draw [->,line width=0.8pt] (11.7,1) -- (12.7,1);
\draw [line width=0.8pt,color=qqqqff] (13,0)-- (13+3.4641,2);
\draw [line width=0.8pt,color=qqqqff] (13+3.4641,2)-- (16+3.4641,2);
\draw [line width=0.8pt,color=qqqqff] (16+3.4641,2)-- (16,0);
\draw [line width=0.8pt,color=qqqqff] (13,0)-- (16,0);
\draw (1.5,-.4) node {$a$};
\draw (7.5,-.4) node {$a$};
\draw (14.5,-.4) node {$a$};
\draw (-.4,2) node {$b$};
\draw (6.9,1.9) node {$b$};
\draw (14.6,1.5) node {$b$};
\end{tikzpicture}
\caption{Collapsing a  parallelogram}\label{F:shear}
\end{figure}
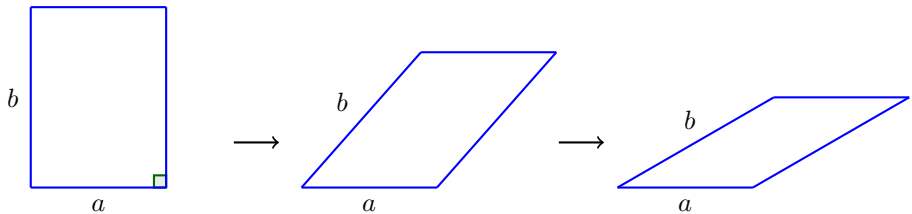

Given the above, to make life interesting we restrict ourselves to considering a subclass of equable parallelograms that has a wealth of interesting members, without becoming mundane.

\begin{definition}
A \emph{lattice equable parallelogram} (or \emph{LEP}, for short) is a parallelogram whose perimeter equals its area and whose vertices lie on the integer lattice $\Z^2$. 
\end{definition}

Notice that in the above definition, we do not require that the side lengths be integers. Indeed, as we show in Lemma \ref{L:bcint} below, this fact can be deduced from the other hypotheses. Throughout this paper we will denote the side lengths by the symbols $a$ and $b$. Moreover, for brevity, we will at times drop the word \emph{length} and simply refer to $a,b$ as the \emph{sides}. Note that a LEP is completely determined, up to a Euclidean motion, by its sides $a,b$. Indeed, if $\theta$ denotes one of the angles between the sides, then the area is $ab\sin\theta$ and so by equability, $\sin\theta=2(a+b)/ab$ is determined by $a$ and $b$. So our main aim is this paper in to study the values of $a,b$ for which a LEP exists with sides $a,b$. 

In Section \ref{S:props} we use a result of Paul Yiu on Heronian triangles to prove the following criteria. 

\begin{theorem}\label{T:suff}
Given positive integers $a,b$, a lattice equable parallelogram with sides $a,b$ exists if and only if  $a^2 b^2 -4(a+b)^2$ is a square.
\end{theorem}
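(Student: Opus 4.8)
The plan is to coordinatise a lattice equable parallelogram by the two integer edge-vectors issuing from a single vertex, and to extract the criterion from the two-dimensional Lagrange identity. Place one vertex at the origin and let $A,B\in\Z^2$ be the edge-vectors meeting there, so that $a=|A|$ and $b=|B|$ (these lengths being the prescribed integers). Because the four vertices lie in $\Z^2$, the area of the parallelogram is $|\det(A,B)|$, and equability forces this to equal the perimeter $2(a+b)$. The identity
\[
\det(A,B)^2+(A\cdot B)^2=|A|^2|B|^2=a^2b^2
\]
then rearranges to $(A\cdot B)^2=a^2b^2-4(a+b)^2$. Since $A\cdot B$ is an integer, the left-hand side is a perfect square, which settles the forward implication at once: the existence of a LEP with sides $a,b$ forces $a^2b^2-4(a+b)^2$ to be a square.

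For the converse I would reverse this reading. Suppose $a^2b^2-4(a+b)^2=k^2$ with $k\in\Z_{\ge0}$. It suffices to produce $A,B\in\Z^2$ with $|A|=a$, $|B|=b$ and $A\cdot B=k$, for then the identity above gives $|\det(A,B)|=\sqrt{a^2b^2-k^2}=2(a+b)$, so the parallelogram spanned by $A,B$ has integer vertices and area equal to its perimeter. The hypothesis says precisely that $k^2+\big(2(a+b)\big)^2=(ab)^2$, so the angle $\theta$ at the shared vertex must satisfy $\cos\theta=k/(ab)$ and $\sin\theta=2(a+b)/(ab)$, both rational. In other words I must realise over the lattice two integer-length sides $a,b$ enclosing an angle with rational sine and cosine; this is exactly the configuration addressed by the result of Yiu invoked above, which I would apply to place the triangle forming half of the parallelogram with all its vertices in $\Z^2$. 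Note that the third side of that triangle need not be an integer (for $a=b=5$, $k=15$ it is $\sqrt{20}$), so the genuine input is the rationality of the angle rather than the Heronian property of this particular triangle.

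Concretely I would carry out the construction in the Gaussian integers $\Z[i]\cong\Z^2$. Set $\omega=k+2(a+b)i$, so that its norm is $N(\omega)=k^2+4(a+b)^2=a^2b^2$. Writing $A,B$ as Gaussian integers, the three requirements $|A|=a$, $|B|=b$, $A\cdot B=k$ amount to the single equation $\bar A B=\omega$ together with $N(A)=a^2$: indeed $\operatorname{Re}(\bar A B)=A\cdot B$ and $\operatorname{Im}(\bar A B)=\det(A,B)$, while $N(B)=N(\omega)/N(A)=b^2$ is then automatic, and non-degeneracy follows from $|\det(A,B)|=2(a+b)>0$. Thus everything reduces to exhibiting a divisor $\bar A\mid\omega$ in $\Z[i]$ of norm exactly $a^2$, after which $B=\omega/\bar A$ completes the parallelogram.

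I expect the heart of the matter to be this last divisibility step: verifying that the Gaussian integer $\omega$, whose norm factors as $a^2\cdot b^2$, really does possess a divisor of norm $a^2$. By unique factorisation in $\Z[i]$ this becomes a bookkeeping question about how the prime factors of $a$ distribute among the Gaussian primes dividing $\omega$ — the inert primes $p\equiv 3\pmod 4$, the split primes $p\equiv 1\pmod 4$, and the ramified prime $2$ — and one must check in each case that the required partial product of norm $a^2$ can be assembled from factors that genuinely divide $\omega$. This is precisely the content that Yiu's result packages, so rather than grind through the prime-by-prime argument I would cite that result to certify the embedding, having first confirmed that the configuration meets its hypotheses.
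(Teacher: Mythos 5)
Your proposal is correct, and its decisive step coincides with the paper's: both proofs of the converse come down to checking that the triangle spanned by the two sides is \emph{geodetic} in Yiu's sense (rational area, side lengths that are square roots of integers) and then citing the final paragraph of Yiu's paper for lattice realizability, after which doubling the triangle gives the LEP. Where you genuinely differ is the necessity direction: the paper derives it via Heron's formula applied to the triangle on a diagonal (Lemma \ref{L:diag}), obtaining $d^2=(a^2+b^2)\pm 2\sqrt{a^2b^2-4(a+b)^2}$ and reading off squareness from the integrality of $d^2$, whereas you get it in one line from the Lagrange identity $\det(A,B)^2+(A\cdot B)^2=a^2b^2$ together with $|\det(A,B)|=2(a+b)$ and $A\cdot B\in\Z$ --- a cleaner route that also identifies the square root concretely as $|A\cdot B|$, and your construction of the triangle from two sides and an included angle $\theta\in(0,\pi)$ with $\sin\theta=2(a+b)/(ab)$ (well defined since the hypothesis gives $a^2b^2\ge 4(a+b)^2$) quietly dispenses with the triangle-inequality verification $a,b<d<a+b$ that the paper carries out. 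Your Gaussian-integer reformulation --- $\bar A B=\omega:=k+2(a+b)i$ with $N(\omega)=a^2b^2$, so that a lattice realization is exactly a divisor of $\omega$ of norm $a^2$ --- is a genuine added insight; for instance it explains structurally why several $E(\Z^2)$-classes can share the same $(a,b)$, corresponding to different divisor choices, as with the paper's two inequivalent $(5,10)$ examples. But note that in your write-up this reformulation is not load-bearing: you do not prove the norm-$a^2$ divisor exists, you obtain the embedding from Yiu exactly as the paper does, so a self-contained proof would still require the split/inert/ramified bookkeeping you only sketch. Finally, your remark that the genuine hypothesis is the geodetic rather than Heronian property (third side $\sqrt{20}$ for $a=b=5$) is accurate and matches the paper's citation of the geodetic extension in Yiu's last paragraph.
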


In Section \ref{S:345} we use Rosenberger's Theorem on generalised Markov equations to prove  the following result.

\begin{theorem}\label{T:345}
 The set of ordered pairs $(a,b)$ of positive integers $a\le b$, for which $a^2 b^2 -4(a+b)^2$ is a square, is given by the disjoint union 
$\mathcal T_3 \cup \mathcal T_4 \cup \mathcal T_5$ where
\begin{align*}
\mathcal T_3 =\{(3q, 3r) \in\N^2 \ &|\  \exists (m,n) \in\N^2, m^2 + n^2 + q^2 = 3mnq
\ \&\   q \le r =3mn-q\},\\
\mathcal T_4 =\{(4q,4r)\in\N^2\ &|\ \exists (m,n) \in\N^2, m^2 + n^2+2q^2 =4mnq
\ \&\     q \le r =2mn-q\},\\
\mathcal T_5 =\{(5q,5r)\in\N^2\ &|\ \exists (m,n) \in\N^2, m^2 + n^2+5q^2 =5mnq
\ \&\     q \le r =mn-q\}.
\end{align*}
Furthermore, for each $i=3,4,5$, if $(a,b)\in \mathcal T_i$, then $\gcd(a,b)=i$.
\end{theorem}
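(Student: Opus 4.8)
The plan is to prove the claimed equality by establishing the two inclusions separately, and then to read off disjointness from the final gcd assertion. For the inclusion $\mathcal T_3\cup\mathcal T_4\cup\mathcal T_5\subseteq\{(a,b):a^2b^2-4(a+b)^2\text{ is a square}\}$ I would substitute each parametrisation and simplify using the defining Markov equation. Take $\mathcal T_3$: writing $a=3q$, $b=3r$ with $r=3mn-q$, the relation $m^2+n^2+q^2=3mnq$ gives $qr=3mnq-q^2=m^2+n^2$ and $q+r=3mn$, hence $a+b=9mn$ and $ab=9(m^2+n^2)$; therefore
\[
a^2b^2-4(a+b)^2=81\big((m^2+n^2)^2-4m^2n^2\big)=\big(9(m^2-n^2)\big)^2.
\]
The same computation, resting on the common identity $(m^2+n^2)^2-4m^2n^2=(m^2-n^2)^2$, yields $\big(8(m^2-n^2)\big)^2$ for $\mathcal T_4$ (where $a+b=8mn$, $ab=8(m^2+n^2)$) and $\big(5(m^2-n^2)\big)^2$ for $\mathcal T_5$ (where $a+b=5mn$, $ab=5(m^2+n^2)$).

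For the reverse inclusion, suppose $a\le b$ and $a^2b^2-4(a+b)^2=k^2$. Put $g=\gcd(a,b)$ and $a=gq$, $b=gr$ with $\gcd(q,r)=1$ and $q\le r$. Since $a^2b^2-4(a+b)^2=g^2\big(g^2q^2r^2-4(q+r)^2\big)$, we have $g\mid k$, and writing $\ell=k/g$ we obtain $\ell^2+4(q+r)^2=g^2q^2r^2$, a Pythagorean-type relation in which $\gcd(qr,q+r)=1$. The plan is to run a descent on this relation (equivalently, to factor $\ell+2(q+r)i$ in $\mathbb{Z}[i]$) so as to produce integers $m,n$ and a positive integer $c$ with $m^2+n^2=c\,qr$ and $g\,mn=c(q+r)$. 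Granting this, $q$ and $r$ are precisely the two roots of $cx^2-g\,mn\,x+(m^2+n^2)=0$, so $(m,n,q)$ is a positive solution of the generalised Markov equation $x^2+y^2+cz^2=g\,xyz$, and $r=(g/c)mn-q$ is the Vieta partner of $q$.

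Now I would invoke Rosenberger's Theorem. Because the coefficients of $m^2$ and $n^2$ are both equal to $1$, the quadruple $(1,1,c,g)$ must appear in Rosenberger's list, which forces $(c,g)\in\{(1,3),(2,4),(5,5)\}$; the only other admissible quadruple, $(1,2,3,6)$, is excluded exactly because it has no two coefficients equal to $1$. Feeding these three values into $r=(g/c)mn-q$ recovers $r=3mn-q$, $r=2mn-q$ and $r=mn-q$ respectively, so $(a,b)$ lies in $\mathcal T_3$, $\mathcal T_4$ or $\mathcal T_5$, with $g\in\{3,4,5\}$.

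It remains to prove the gcd assertion, from which disjointness is immediate since the three families then carry distinct gcds. Here I would argue directly from the parametrisation: for $(a,b)\in\mathcal T_3$ with $a=3q$, $b=3r$, it suffices to show $\gcd(q,r)=1$. Any common prime $p$ divides $q+r=3mn$ and $qr=m^2+n^2$; as the entries of a Markov triple are pairwise coprime, $\gcd(mn,m^2+n^2)=1$, so $p\mid 3$, and reducing $m^2+n^2$ modulo $3$ (with $m,n$ both prime to $3$) rules out $p=3$. The families $\mathcal T_4,\mathcal T_5$ are treated identically, with the parity and residue bookkeeping adapted to the relevant modulus, giving $\gcd(a,b)=g$ throughout. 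The main obstacle is the descent step in the reverse inclusion: extracting the integers $m,n$ and the coefficient $c$ — that is, recognising $\ell+2(q+r)i$ as essentially a square in $\mathbb{Z}[i]$ — and verifying that the resulting equation meets Rosenberger's divisibility hypotheses with two unit coefficients, so that the exceptional quadruple $(1,2,3,6)$ cannot intervene.
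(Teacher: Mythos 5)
Your skeleton is the paper's: parametrise the Pythagorean relation, feed the resulting generalised Markov equation into Rosenberger's Theorem, and recover $r$ as the Vieta partner of $q$; your forward inclusion is identical to the paper's computation, and your gcd argument is a sound variant of its Lemma~\ref{L:primed} plus case analysis. But your Rosenberger case analysis contains a concrete error. Five of the six equations on Rosenberger's list --- not three --- have two unit coefficients: besides M $(1,1,1,3)$, R1 $(1,1,2,4)$ and R3 $(1,1,5,5)$, the list contains R4, $x^2+y^2+z^2=xyz$, i.e.\ $(1,1,1,1)$, and R5, $x^2+y^2+2z^2=2xyz$, i.e.\ $(1,1,2,2)$. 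Your claim that ``the only other admissible quadruple, $(1,2,3,6)$, is excluded because it has no two coefficients equal to $1$'' therefore does not force $(c,g)\in\{(1,3),(2,4),(5,5)\}$: it leaves open $(c,g)=(1,1)$ and $(2,2)$, i.e.\ putative solutions with $\gcd(a,b)=1$ or $2$, which are exactly what the theorem must exclude. The paper kills these two cases by a separate congruence argument (Remark~\ref{R:equivs}): every solution of R4 has all entries divisible by $3$, every solution of R5 has all entries even, contradicting $\gcd(m,n)=1$. The same repair is available in your normalisation, since $\gcd(q,r)=1$: if $(m,n,q)$ solved R4 then $3$ would divide $m$, $n$ and $q$, hence also $r=mn-q$; similarly $2\mid\gcd(q,r)$ in the R5 case. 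But this step must actually be made; it does not follow from counting unit coefficients.

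Separately, the step you yourself flag as the ``main obstacle'' --- extracting $m,n$ and $c$ with $m^2+n^2=c\,qr$, $g\,mn=c(q+r)$, and verifying Rosenberger's divisibility hypothesis $c\mid g$ --- is precisely where most of the paper's work lies, so leaving it as a plan leaves the proof unproved at its pivot. Two specific issues hide there. First, the Pythagorean parametrisation of $\ell^2+4(q+r)^2=(gqr)^2$ has two cases, according to whether the even leg $2(q+r)$ plays the role of $2kmn$ or of $k(n^2-m^2)$ (the paper's \eqref{E:py1} versus \eqref{E:py2}); reducing the second case to the first requires a genuine parity argument (the paper shows $k$ must be even there and substitutes $x=n-m$, $y=m+n$, with a further adjustment when $\gcd(x,y)=2$), and ``$\ell+2(q+r)i$ is essentially a square in $\mathbb{Z}[i]$'' covers only one of the two associations of the legs. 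Second, the divisibility $c\mid g$ needed to invoke Rosenberger at all is not automatic: the paper engineers it by writing $k=fs^2$ with $f$ squarefree and deducing successively $f\mid a$ and then $s\mid a/f$, arriving at $m^2+n^2+fq^2=fs\,mnq$ in which the third coefficient $f$ visibly divides $fs$. So: correct strategy, essentially the paper's route, but the post-Rosenberger case analysis is wrong as stated, and the parametrisation/descent step is asserted rather than proved.
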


Theorems \ref{T:suff} and \ref{T:345} provide a classification of LEPs,
but we need to clarify what we mean by \emph{classification}, as there is more than one natural notion. First, arguing as we explained above, it is clear that \emph{up to the full group of Euclidean motions}, a LEP is determined by its side lengths $a,b$. Alternately, one can consider the smaller group  $E(\Z^2)$ of isometries of the integer lattice $\Z^2$. This  is the semi-direct product of $\Z^2$ with its automorphism group, which is the dihedral group of order 8. Explicitly, $E(\Z^2)$ is generated by integral translations and by two generators of finite order, the rotation $(x, y) \mapsto (-x, y)$ and the reflection $(x, y) \mapsto (y, x)$. The set of LEPs are of course preserved by the action of the $E(\Z^2)$, so the orbits of LEPs define equivalence classes. It is natural to imagine  that there might  only be one orbit of LEPs with given side lengths $a,b$. In fact, this is not the case. For example,  the LEPs with vertices $(0,0),(6,8),(6,13),(0,5)$ and $(0,0),(10,0),(14,3),(4,3)$ both have side-lengths 5, 10,  but these LEPs do not belong to the same $E(\Z^2)$-equivalence class. So classification \emph{up to $E(\Z^2)$-equivalence} is more complicated than classification up to Euclidean motions. In this paper we will restrict ourselves to the latter, simpler notion of equivalence, so that for us a LEP will be determined by its side lengths. For this reason, from Section~\ref{S:forest} onwards, we will identify a LEP with its corresponding pair $(a,b)$ of integers satisfying the condition of Theorem~\ref{T:suff}.

In Section \ref{S:forest} we describe how all LEPs can be derived from three fundamental examples by successive applications of four functions. This gives the forest of LEPs, consisting of three trees corresponding to the three possible values of $\gcd(a,b)$: see Figures \ref{F:tree35} and \ref{F:tree4}. 
Figure \ref{F:base} shows 9 LEPs, three from each of the trees.
The top-left most three are the root with side lengths $(3,6)$ and the adjacent LEPs  with side lengths $(3,15)$ and $(6,39)$.
The middle three are the root with side lengths $(4,4)$, the adjacent LEP  with side lengths $(4,20)$, and the LEQ adjacent to that with side lengths $(20,116)$.
The bottom three are the root with side lengths $(5,5)$ and the adjacent LEPs  with side lengths $(5,10)$ and $(5,85)$.

\begin{figure}[h]
\begin{tikzpicture}[scale=.1,line cap=round,line join=round,>=triangle 45,x=1cm,y=1cm]
\begin{axis}[
x=1cm,y=1cm,
axis lines=middle,
ymajorgrids=true,
xmajorgrids=true,
xmin=0,
xmax=100,
ymin=-10,
ymax=100,
xtick={0,10,...,100},
ytick={-10,0,...,100},]
\def\xg{10};
\def\yg{-10};
\def\xr{0};
\def\yr{30};
\fill[shift={(\xg,\yg)},line width=2pt,color=green] (0,0) --(3,-4) -- (8,-4) -- (5,0) -- cycle;
\fill[shift={(\xg,\yg)},line width=2pt,color=green!40] (8,-4) -- (18,-4) -- (15,0) -- (5,0) -- cycle;
\fill[shift={(\xg,\yg)},line width=2pt,color=green!15] (0,0) -- (5,0) -- (82,36) -- (77,36) -- cycle;
\fill[shift={(\xr,\yr)},line width=2pt,color=red!20] (3,-6) -- (18,30) -- (18,36) -- (3,0) -- cycle;
\fill[shift={(\xr,\yr)},line width=2pt,color=red!200] (0,0) -- (3,0) -- (3,-6) -- (0,-6) -- cycle;
\fill[shift={(\xr,\yr)},line width=2pt,color=red!45] (0,-6) -- (9,-18) -- (12,-18) -- (3,-6) -- cycle;
\fill[line width=2pt,color=blue!150] (4,-4) -- (0,-4) -- (0,0) -- (4,0) -- cycle;
\fill[line width=2pt,color=blue!20] (0,0) -- (16,12) -- (96,96) -- (80,84) -- cycle;
\fill[line width=2pt,color=blue!45] (0,0) -- (4,0) -- (20,12) -- (16,12) -- cycle;
\draw[shift={(\xg,\yg)},line width=2pt] (0,0) --(3,-4) -- (8,-4) -- (5,0) -- cycle;
\draw[shift={(\xg,\yg)},line width=2pt] (8,-4) -- (18,-4) -- (15,0) -- (5,0)  -- cycle;
\draw[shift={(\xg,\yg)},line width=2pt] (0,0) -- (5,0) --(82,36) -- (77,36)-- cycle;
\draw[shift={(\xr,\yr)},line width=2pt] (3,-6) -- (18,30) -- (18,36) -- (3,0)  -- cycle;
\draw[shift={(\xr,\yr)},line width=2pt] (0,0) -- (3,0) -- (3,-6) -- (0,-6)  -- cycle;
\draw[shift={(\xr,\yr)},line width=2pt] (0,-6) -- (9,-18) -- (12,-18) -- (3,-6) -- cycle;
\draw[line width=2pt] (4,-4) -- (0,-4) -- (0,0) -- (4,0) -- cycle;
\draw[line width=2pt] (0,0) -- (16,12) -- (96,96) -- (80,84) -- cycle;
\draw[line width=2pt] (0,0) -- (4,0) -- (20,12) -- (16,12) -- cycle;
\end{axis}
\end{tikzpicture}
\caption{Nine LEPs, three from each tree}\label{F:base}
\end{figure}
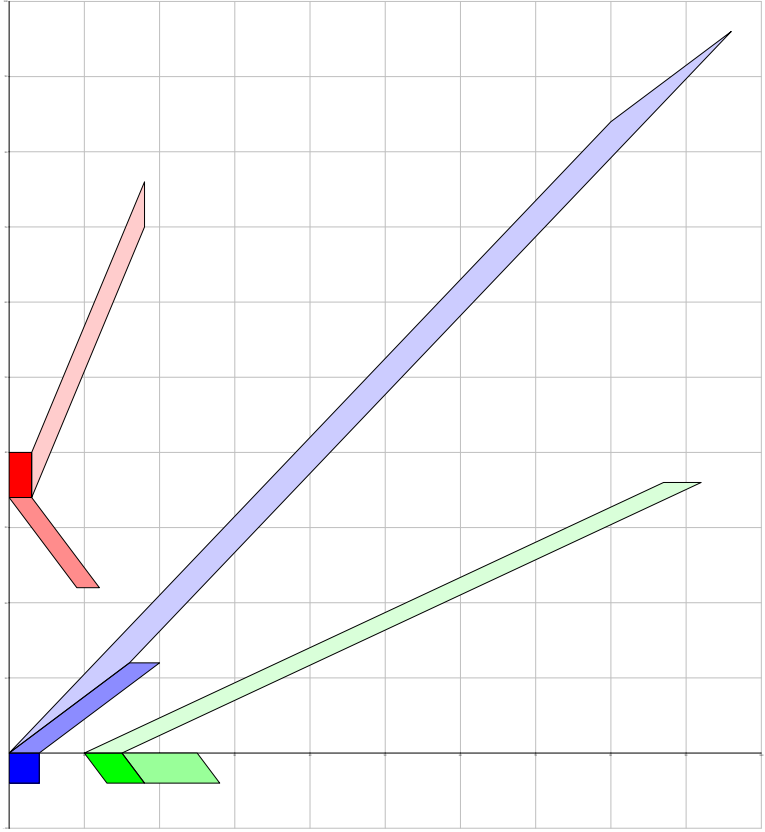

We investigate various aspects and applications of these results in Sections \ref{S:rec} and \ref{S:dah}. 
Then in Section \ref{S:PEPs} we analyse a large natural  family of LEPs that we call \emph{Pythagorean  equable parallelograms}. These are LEPs whose complement in a circumscribing rectangle consists of two Pythagorean triangles; see Definition \ref{D:PEPs}. We use Theorem \ref{T:345} to prove the following: 

\begin{theorem}\label{T:PEPs}
If a Pythagorean equable parallelogram has sides $a,b$ with $a\le b$ then $a=3,4,5,6$ or $10$. There are 5 infinite families of such parallelograms, which are given by the solutions of corresponding Pell or Pell-like equations: 
\begin{enumerate}
\item[{\emph{(F1)}}]  $a=3$, $b=\frac{3(x^2+y^2)}{2}$, where  $y^2-5x^2 =4$.
\item[{\emph{(F2)}}] $a=4$, $b=4(x^2+y^2)$, where  $y^2-3x^2 =1$.
\item[{\emph{(F3)}}] $a=5$, $b=\frac{x^2+y^2}{2}$, where  $3y^2-7x^2 =20$.
\item[{\emph{(F4)}}]  $a=6$, $b=3(x^2+y^2)$, where  $y^2-2x^2 =1$.
\item[{\emph{(F5)}}] $a=10$, $b=x^2+y^2$, where  $2y^2-3x^2 =5$.
\end{enumerate}
\end{theorem}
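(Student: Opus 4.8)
The plan is to translate the geometric definition of a Pythagorean equable parallelogram into the Rosenberger parametrisation of Theorem~\ref{T:345}, and then convert the resulting Markov-type relations into Pell equations.

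First I would fix coordinates. Because the complement of the parallelogram in the circumscribing rectangle is to be \emph{two} triangles rather than four, one pair of parallel sides must run along a pair of sides of the rectangle; after a reflection in $E(\Z^2)$ I may place that pair horizontally, so the parallelogram has vertices $(0,0),(s,0),(s+c,h),(c,h)$ with integers $c,h\ge 0$, horizontal base of length $s$, and slant side of length $t=\sqrt{c^2+h^2}$. The two triangles then have legs $c,h$ and hypotenuse $t$, so the Pythagorean condition is exactly that $c,h,t$ are integers. Equability reads $sh=2(s+t)$, giving $h=2(s+t)/s$; writing the sides as $a,b$ with $a\le b$ and taking $s=a$ (I will justify below that the shorter side may always serve as base), a direct computation yields the identity
\[
a^2c^2=a^2b^2-4(a+b)^2 .
\]
By Theorem~\ref{T:suff} the right-hand side is a perfect square, say $D^2$, so $c=D/a$ and the whole problem collapses to the two divisibility conditions $a\mid 2b$ (so that $h\in\Z$) and $a\mid D$ (so that $c\in\Z$).

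Next I would feed in Theorem~\ref{T:345}. With $k=\gcd(a,b)\in\{3,4,5\}$ and $a=ka'$, $b=kb'$, $\gcd(a',b')=1$, the condition $a\mid 2b$ forces $a'\mid 2$, hence $a\in\{k,2k\}\subseteq\{3,4,5,6,8,10\}$; these are precisely the cases $q=1$ and $q=2$ in the description of $\mathcal T_k$. For each admissible pair I substitute $a=kq$ and the corresponding expression for $b$ from Theorem~\ref{T:345}, together with the Markov-type relation defining $\mathcal T_k$, into $c^2=b^2-h^2$. Setting $P=mn$, the relation expresses both $(m+n)^2$ and $(m-n)^2$ as explicit linear functions of $P$, and their product collapses to $c^2=\lambda(m^2-n^2)^2$ for a rational constant $\lambda$ depending on $(k,q)$; the factors of $2$ needed for $c$ to be integral in the cases $a=6,10$ are automatic, since there $m^2+n^2$ is even and so $m\equiv n\pmod 2$. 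Thus every lattice equable parallelogram with $a\in\{k,2k\}$ is automatically Pythagorean.

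The hard part will be eliminating $a=8$, that is $(k,q)=(4,2)$, whose Markov equation is $m^2+n^2+8=8mn$. I would show it has no integer solutions: reduction modulo $8$ forces $m,n$ even, and then $m=2m_1$, $n=2n_1$ gives $m_1^2+n_1^2+2=8m_1n_1$, which is impossible because it requires $m_1^2+n_1^2\equiv 6\pmod 8$, whereas a sum of two squares is never $\equiv 6\pmod 8$. Hence $\mathcal T_4$ contains no pair with $a=8$, and the admissible values collapse to $a\in\{3,4,5,6,10\}$. Finally, in each of the five surviving cases I substitute $x,y$ equal to $m-n,\,m+n$ (or their halves, which are integral precisely when $m\equiv n\pmod 2$) into the linear formulas for $(m\pm n)^2$: the Markov relation becomes exactly the stated Pell or Pell-like equation, while $b$ rewrites in the stated form in $x^2+y^2$, producing families (F1)--(F5). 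Infinitude of each family is inherited from the infinite Rosenberger solution trees underlying $\mathcal T_k$ in Theorem~\ref{T:345}. It remains to justify that the base may be taken to be the shorter side: putting the longer side $b$ on the axis instead forces $b\mid 2a$, hence $b\in\{a,2a\}$, and the few resulting parallelograms (the rhombus $a=b$ and the $b=2a$ case, e.g.\ $(5,5)$ and $(5,10)$, together with the degenerate equable rectangles) are already realised with the shorter side as base, so no parallelograms are missed.
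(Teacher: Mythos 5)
Your proposal is correct and follows the same pipeline as the paper's proof: reduce the Pythagorean condition to the divisibility $a\mid 2b$, combine with Theorem~\ref{T:345} to get $a\in\{k,2k\}$ for $k=\gcd(a,b)\in\{3,4,5\}$, eliminate $a=8$, and then convert the Markov-type relation \eqref{E:xb} into Pell equations via $x,y=n\mp m$ (or their halves, with the same parity checks). There are two local deviations worth noting. First, where you rederive the equivalence of Pythagorean with $a\mid 2b$ by explicit coordinates, the paper has already packaged this as Proposition~\ref{P:pythag}; in particular your observation that $a\mid D$ (integrality of $c=D/a$) is automatic once $a\mid 2b$ holds is exactly the content of the implication (c)$\implies$(b) there, which the paper proves by the slicker remark that $x^2=b^2-(2+i)^2$ is an integer whose rational square root must be an integer, whereas you verify it case-by-case from $D=k(n^2-m^2)$ and parity — both are fine. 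Second, to kill $a=8$ the paper cites Remark~\ref{R:div} (when $\gcd(a,b)=4$, neither side is divisible by $2^3$), while you give a self-contained mod-$8$ descent on $m^2+n^2+8=8mn$; your descent is valid, and can even be shortened, since mod $8$ already forces $m,n$ both even, contradicting the coprimality of $m,n$ built into the parametrisation of Theorem~\ref{T:345}. One wording slip: a reflection in $E(\Z^2)$ cannot rotate an arbitrary pair of lattice sides to the horizontal — a general Euclidean motion is needed, and checking that this preserves the lattice property is precisely the point of Proposition~\ref{P:pythag}(a)$\implies$(d). This is harmless in your argument, because your computation of $h$, $c$, $t$ uses only the integrality of the side lengths of $P$ and of the circumscribing rectangle, not lattice coordinates, and the lattice re-enters only in the converse direction, where your explicit vertices $(0,0),(a,0),(a+c,h),(c,h)$ do exhibit a genuine lattice realisation; still, you should say ``Euclidean motion'' rather than ``reflection in $E(\Z^2)$''.
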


At the end of Section \ref{S:PEPs} we locate the Pythagorean equable parallelograms on certain branches of the trees of LEPs given in Figures \ref{F:tree35} and \ref{F:tree4}. 

The paper concludes with an appendix that revisits the classic theorem  on equable triangles that dates to 1904.

\begin{notation} In this paper, we employ the term \emph{positive} in the strict sense. So $\N=\{n\in\Z \ |\ n>0\}$.
\end{notation}


\section{LEPs:  general properties and special cases}\label{S:props}

Let us begin with a trivial remark that we will use on several occasions.

\begin{remark}\label{L:int}
If the square root of an integer is rational, then it is an integer. 
Consequently, if the distance $d$ between two integer lattice points is rational, then $d$ is an integer.
\end{remark}

\begin{lemma}\label{L:bcint}
LEPs have integer side lengths.\end{lemma}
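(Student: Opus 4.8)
The plan is to exploit the hypothesis in two complementary ways: the lattice condition forces $a^2$ and $b^2$ to be integers, while equability forces $a+b$ to be rational. Combining these will show that $a$ and $b$ are rational, whereupon Remark~\ref{L:int} upgrades rationality to integrality.

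First I would record the consequences of the vertices lying in $\mathbb{Z}^2$. Writing the parallelogram as the one spanned by two integer vectors $\mathbf{u},\mathbf{v}$ emanating from a common vertex, the squared side lengths $a^2=|\mathbf{u}|^2$ and $b^2=|\mathbf{v}|^2$ are sums of squares of integers, hence integers. Moreover the area equals $|\det(\mathbf{u},\mathbf{v})|$, the absolute value of a determinant with integer entries, and is therefore a positive integer.

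Next I would bring in equability. The perimeter is $2(a+b)$ and the area is the integer just described, so $2(a+b)$ is an integer and in particular $a+b$ is rational. Since $a^2-b^2$ is an integer and $a+b>0$, the identity $a-b=(a^2-b^2)/(a+b)$ shows that $a-b$ is rational as well. Adding and subtracting then gives that $a=\tfrac12\big((a+b)+(a-b)\big)$ and $b=\tfrac12\big((a+b)-(a-b)\big)$ are both rational.

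Finally, each of $a$ and $b$ is the distance between two integer lattice points (the endpoints of a side) and has just been shown to be rational, so Remark~\ref{L:int} yields that $a$ and $b$ are integers, as required. The only point requiring genuine care is the observation that the area of a lattice parallelogram is an honest integer rather than merely a half-integer; this is what makes $a+b$ rational and drives the whole argument, while the remaining manipulations are routine.
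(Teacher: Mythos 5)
Your proposal is correct and follows essentially the same route as the paper's proof: both observe that the area $\lvert\det(\mathbf{u},\mathbf{v})\rvert$ (written as $xv-yu$ in the paper) is an integer, use equability to make $2(a+b)$ an integer, deduce that $a-b=(a^2-b^2)/(a+b)$ is rational since $a^2,b^2\in\Z$, and then invoke Remark~\ref{L:int} to upgrade the rational side lengths to integers. The only difference is notational (edge vectors and a determinant versus explicit coordinates), so there is nothing to add.
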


\begin{proof}
Consider a LEP $P$ with vertices 
$O(0,0), A(x,y),B(z,w),C(u,v)$,
in anticlockwise order, where $z=x+u,w=y+v$. Let $a$ denote the length of $OA$ and $b$ the length of $OC$.
The area of $P$ is $xv-yu$, which is an integer. By the equability hypothesis, $2(a+b)$ is an integer. So, as $a^2,b^2$ are integers,
$a-b=(a^2-b^2)/(a+b)$ is rational. Thus $a=\frac{a+b}2+\frac{a-b}2$ is rational, and hence $a$ is an integer, by Remark~\ref{L:int}. 
So $b$ is also rational, and hence an integer.
\end{proof}

\begin{remark}
In fact, the above lemma is a special case of a general result: every lattice equable polygon has integer sides. Indeed, lattice polygons can be partitioned into lattice triangles, and every lattice triangle has integer or half-integer area, by the same argument we used in the proof of the above lemma. So by the equability hypothesis, the sum of the side lengths of a lattice equable polygon is rational. But each side length is a square root of an integer, and it is well known that if $\sum_{i=1}^n\sqrt{a_i}$ is rational for integers $a_1,\dots,a_n$, then $a_i$ is rational for each $i$; see for example \cite{art} or \cite{Yuan}. But then the side lengths are all integers by Remark~\ref{L:int}.
\end{remark}

\begin{proposition}\label{P:partit}
No LEP can be partitioned into two congruent right triangles.
\end{proposition}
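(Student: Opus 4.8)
The plan is to reduce the statement to a property of a single triangle. First I would observe that any partition of a convex quadrilateral into two triangles is cut by a diagonal: the two pieces share the cut as a common edge, so together they have $3+3-2=4$ distinct vertices, which must be the four corners of the quadrilateral, forcing the cut to join two opposite corners. For a parallelogram the two triangles so obtained are automatically congruent, being interchanged by the central symmetry; hence the word \emph{congruent} carries no extra information and the real content is that these triangles cannot be right angled. Writing the LEP as $OABC$ with $a=|OA|$, $b=|OC|$, and letting $d$ be the length of the chosen diagonal, each triangle has sides $a,b,d$, and by equability its area is half the area of the LEP, namely $a+b$. Since $a,b$ are integers by Lemma~\ref{L:bcint}, a right-angled such triangle with all integer sides would force $d\in\N$ as well, and it is this possibility I must exclude.

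Next I would divide into cases according to which side is the hypotenuse. If the hypotenuse is the diagonal $d$, the right angle lies between the sides $a$ and $b$, so the LEP is a rectangle and the area condition $\tfrac12 ab=a+b$ becomes $(a-2)(b-2)=4$, giving only $(a,b)=(3,6)$ or $(4,4)$. If instead a side is the hypotenuse, say $a$, then $a^2=b^2+d^2$ combined with the area condition $\tfrac12 bd=a+b$ eliminates $d$ and yields $b^2(a-b)=4(a+b)$; the symmetric case with hypotenuse $b$ gives $a^2(b-a)=4(a+b)$. One may cross-check these against Theorem~\ref{T:suff}: substituting the two diagonal lengths $d^2=a^2+b^2\pm 2k$, where $k^2=a^2b^2-4(a+b)^2$, into the Pythagorean relations reproduces exactly the same equations.

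The two off-diagonal equations I would dispatch by elementary divisibility. Setting $m$ to be the shorter side and $t\ge 1$ the difference of the two sides, each equation rearranges to $t(m^2-4)=8m$, so $m^2-4$ divides $8m$ and in particular $m^2-4\le 8m$, forcing $m\le 8$. A finite check over $m\in\{3,\dots,8\}$ shows $t=8m/(m^2-4)$ is never a positive integer, so neither equation has a solution. There remains only the rectangle case: although $(3,6)$ and $(4,4)$ satisfy the area condition, the corresponding diagonals $\sqrt{45}$ and $\sqrt{32}$ are irrational, so $d\notin\N$ and the two right triangles are not integer-sided. Hence no LEP admits the required decomposition.

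I expect the rectangle case to be the genuine obstacle, since it is the only configuration in which real right angles actually arise: the diagonal of any rectangle cuts it into two congruent right triangles, so the proposition can hold only once one reads \emph{right triangle} in the integral sense that the lattice setting dictates, and the argument must then turn on the arithmetic fact that the two rectangular LEPs have non-square values of $a^2+b^2$. The non-rectangular cases, by contrast, are unconditional: the equations $b^2(a-b)=4(a+b)$ and $a^2(b-a)=4(a+b)$ have no positive integer solutions whatsoever, so no integrality of $d$ need be invoked there.
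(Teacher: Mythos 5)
Your proposal is correct, and it is in fact more complete than the paper's own proof, which proceeds quite differently: the paper re-assembles the two congruent right triangles into an isosceles triangle with sides $2a,b,b$ having the same perimeter and area as the LEP, and then invokes the classification of the five equable integer triangles (none of which is isosceles), offering as an alternative the direct computation $(a^2-4)(h-4)=16$ where $h$ is the diagonal. Both of the paper's arguments silently presuppose the configuration of Figure~\ref{F:PneP}, in which the right angle lies between a side and the cutting diagonal --- precisely your two ``side is the hypotenuse'' cases, which you dispatch by the equivalent divisibility condition $t(m^2-4)=8m$ (with $t$ a positive integer since the sides are integers by Lemma~\ref{L:bcint}) and a finite check over $m\le 8$. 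Your preliminary reduction --- that any two-triangle partition of the parallelogram must be a diagonal cut, and that the two halves are then automatically congruent via the central symmetry --- is a point the paper leaves entirely implicit, and your argument for it is essentially right (one should strictly verify that the cut is a full edge of each piece with endpoints at corners, but for a parallelogram this is routine).

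Where you go beyond the paper is your third case, the right angle between the sides $a$ and $b$, and your instinct there is exactly on target: the $3\times 6$ rectangle and the $4\times 4$ square \emph{are} LEPs (the paper itself identifies them as such in Remark~\ref{R:widths}), and each is partitioned by its diagonal into two congruent right triangles. So with ``right triangle'' read in the ordinary Euclidean sense, the proposition as stated is false, and the paper's proof skips over the one configuration in which it fails; neither the recombination argument (gluing the rectangle's halves along a leg yields an isosceles triangle of perimeter $2a+2d\neq 2a+2b$, which is not equable) nor the computation $(a^2-4)(h-4)=16$ applies there. Your repair --- reading ``right triangle'' as integer-sided, i.e.\ Pythagorean, which is the sense the paper uses in Definition~\ref{D:PEPs} and which is consistent with the later assertion that the corollary on Heronian triangles generalises this proposition --- makes the statement true, and your argument then proves it in full: the side-hypotenuse cases are impossible unconditionally, and in the rectangle case the hypotenuses $\sqrt{45}$ and $\sqrt{32}$ are irrational, so the triangles are never integer-sided. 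In short: modulo that (correctly flagged) reinterpretation, your proof is valid, more elementary than the paper's appeal to the equable-triangles classification, and exposes a genuine oversight in the paper's statement and proof.
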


\begin{proof}
Suppose a LEP $P$ can be partitioned into two congruent right triangles. Then $P$ can be cut in half and recombined to form an equable isosceles triangle with integer sides, as in Figure \ref{F:PneP}. The proposition then follows from the fact that the complete list of 5 equable triangles includes no isosceles triangles; see the Appendix below. 

\begin{figure}[H]
\begin{tikzpicture}[scale=.6][line cap=round,line join=round,>=triangle 45,x=1cm,y=1cm]
\draw[line width=0.8pt,color=qqwuqq,fill=qqwuqq,fill opacity=0.1] (2.73,0) -- (2.73,0.2703485445925461) -- (3,0.2703485445925461) -- (3,0) -- cycle; 
\draw[line width=0.8pt,color=qqwuqq,fill=qqwuqq,fill opacity=0.1] (3.270348544592546,4) -- (3.270348544592546,3.73) -- (3,3.73) -- (3,4) -- cycle; 
\draw [line width=0.8pt,color=qqqqff] (0,0)-- (3,4);
\draw [line width=0.8pt,color=qqqqff] (3,4)-- (6,4);
\draw [line width=0.8pt,color=qqqqff] (3,0)-- (0,0);
\draw [line width=0.8pt,color=qqqqff] (3,0)-- (6,4);
\draw [line width=0.8pt,dash pattern=on 1pt off 1pt] (3,4)-- (3,0);
\draw (1.5,-.4) node {$a$};
\draw (2.7,2) node {$h$};
\draw (1.5,2.5) node {$b$};
\draw [->,line width=0.8pt] (6.5,2) -- (7.5,2);
\draw[line width=0.8pt,color=qqwuqq,fill=qqwuqq,fill opacity=0.1] (11.27,0) -- (11.27,0.27) -- (11,0.27) -- (11,0) -- cycle; 
\draw [line width=0.8pt,color=qqqqff] (8,0)-- (11,4);
\draw [line width=0.8pt,color=qqqqff] (11,4)-- (14,0);
\draw [line width=0.8pt,color=qqqqff] (14,0)-- (8,0);
\draw [line width=0.8pt,dash pattern=on 1pt off 1pt] (11,4)-- (11,0);
\draw (9.5,-.4) node {$a$};
\draw (10.7,2) node {$h$};
\draw (9.5,2.5) node {$b$};
\end{tikzpicture}
\caption{A particular non-equable parallelogram}\label{F:PneP}
\end{figure}
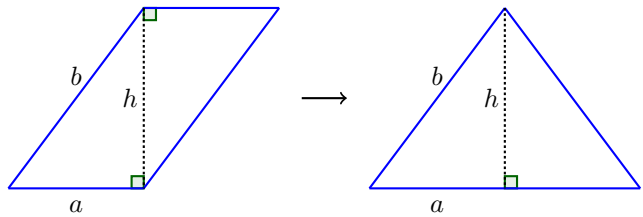

A simple and more direct proof of the proposition is as follows. The equability hypothesis gives
\begin{align}
ha=2a+2\sqrt{a^2+h^2}  \implies a^2(h-2)^2=4(a^2+h^2) &\iff 0=a^2h-4a^2-4h  \nonumber\\
&\iff  (a^2-4)(h-4)=16.\label{E:bh}
\end{align}
Consequently $h$ is rational and so by Remark~\ref{L:int}, $h$ is an integer. Then by \eqref{E:bh}, $a^2-4$ is a factor of 16. But there is obviously no such positive integer $a$. \end{proof}

\begin{lemma}\label{L:diag}
Suppose a LEP $P$ has  sides $a,b$. Then the lengths of the diagonals of $P$ are given by the following formula:
\[
d^2=(a^2  +  b^2)\pm 2\sqrt{a^2 b^2 -4(a+b)^2}.
\]
In particular,  $a^2 b^2 -4(a+b)^2$ is a square.
\end{lemma}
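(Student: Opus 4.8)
The plan is to express the diagonal lengths via the law of cosines and then eliminate the angle using the equability condition. Let $\theta$ denote one of the interior angles of the parallelogram, so that the sides $a,b$ meet at angle $\theta$. Each diagonal splits $P$ into two triangles with sides $a,b$ and included angle either $\theta$ or $\pi-\theta$, so applying the law of cosines gives
\[
d^2 = a^2 + b^2 \pm 2ab\cos\theta,
\]
with the two signs corresponding to the two diagonals.

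Next I would bring in equability. As already observed in the introduction, the area of $P$ is $ab\sin\theta$, so the equability hypothesis forces $ab\sin\theta = 2(a+b)$. Using the Pythagorean identity in the form $(ab\cos\theta)^2 = a^2b^2 - (ab\sin\theta)^2$ and substituting $(ab\sin\theta)^2 = 4(a+b)^2$ yields
\[
(ab\cos\theta)^2 = a^2 b^2 - 4(a+b)^2.
\]
The right-hand side is non-negative precisely because the parallelogram exists, i.e.\ $\sin\theta\le 1$. Taking square roots gives $2ab\cos\theta = 2\sqrt{a^2 b^2 - 4(a+b)^2}$, the sign ambiguity being absorbed into the $\pm$ already present; substituting into the diagonal formula produces the claimed expression.

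For the final assertion I would invoke the lattice hypothesis. Each diagonal joins two vertices of $P$, both lattice points, so $d^2$ is an integer. Since $a,b$ are integers by Lemma~\ref{L:bcint}, the quantity $a^2+b^2$ is an integer, and hence $2\sqrt{a^2 b^2 - 4(a+b)^2}$ must be an integer as well. Thus $\sqrt{a^2 b^2 - 4(a+b)^2}$ is rational, and as $a^2 b^2 - 4(a+b)^2$ is an integer, Remark~\ref{L:int} forces this square root to be an integer; equivalently, $a^2 b^2 - 4(a+b)^2$ is a perfect square. I expect the only delicate point to be the bookkeeping of which sign of $\cos\theta$ goes with which diagonal and checking that the radicand is non-negative, both of which are settled by the existence of the configuration rather than by any substantial computation.
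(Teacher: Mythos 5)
Your proof is correct, but it reaches the formula by a genuinely different route from the paper's. The paper applies Heron's formula to the triangle with sides $a,b,d$ cut off by a diagonal: equability becomes $(a+b+d)(-a+b+d)(a-b+d)(a+b-d)=16(a+b)^2$, which rearranges to the quadratic $d^4-2(a^2+b^2)d^2+(a^2-b^2)^2+16(a+b)^2=0$ in $d^2$, whose two roots give the stated expression. You instead eliminate the angle trigonometrically, combining the law of cosines $d^2=a^2+b^2\pm 2ab\cos\theta$ with the equability relation $ab\sin\theta=2(a+b)$ and the Pythagorean identity. The two derivations are algebraically equivalent (Heron's formula is what one obtains by merging the law of cosines with the sine area formula), but yours buys some clarity at no cost: the nonnegativity of the radicand is manifest, being $(ab\cos\theta)^2$; the matching of the two signs to the two diagonals is automatic, since they subtend supplementary angles---whereas the paper's quadratic a priori only shows that each $d^2$ is \emph{one} of the two roots; and your endgame is more scrupulous than the paper's one-line ``as $a,b,d^2$ are integers, $4a^2b^2-16(a+b)^2$ is a square,'' since you record explicitly that $d^2\in\Z$ because a diagonal joins two lattice points, and you invoke Remark~\ref{L:int} to pass from $2\sqrt{a^2b^2-4(a+b)^2}\in\Z$ to the radicand itself being a perfect square, which is exactly the assertion of the lemma. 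What the paper's route buys instead is an angle-free computation and the explicit quartic satisfied by $d$. Your trigonometric identity is in fact the one the paper deploys for the converse direction in the proof of Theorem~\ref{T:suff}, so your argument sits comfortably alongside the rest of the paper.
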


\begin{proof}
Consider a diagonal of length $d$. By Heron's formula, the triangle with sides $a,b,d$ has area
\[
\frac14\sqrt{(a+b+d)(-a+b+d)(a-b+d)(a+b-d)}.
\]
Hence the equability hypothesis is $(a+b+d)(-a+b+d)(a-b+d)(a+b-d)=16(a+b)^2$. Rearranging, this gives $d^4 - 2 (a^2  +  b^2)d^2 + (a^2  -  b^2)^2 +16(a+b)^2=0$,
so $d^2=(a^2  +  b^2)\pm \sqrt{(a^2  +  b^2)^2-(a^2  -  b^2)^2 -16(a+b)^2}$. Simplifying, we have
\[
d^2=(a^2  +  b^2)\pm \sqrt{4a^2 b^2 -16(a+b)^2},
\]
as required. In particular, as $a,b,d^2$ are integers, $4a^2 b^2 -16(a+b)^2$ is a square.
\end{proof}

\begin{remark}
Suppose a LEP $P$ has  sides $a,b$ and diagonals $d_1,d_2$. Then the above lemma gives
$d_1d_2=(a  +  b)\sqrt{16+(a-b)^2}$.
\end{remark}

\begin{proof}[Proof of Theorem \ref{T:suff}]
The necessity of the condition was shown in Lemma \ref{L:diag}. Therefore, assume that $a,b$ are positive integers such that $4a^2 b^2 -16(a+b)^2$ is a square.
Consider a triangle $T$ with sides $a,b$ and $d:=\sqrt{a^2  +  b^2+ 2\sqrt{a^2 b^2 -4(a+b)^2}}$. Notice that such a triangle exists because $a,b < d <a+b$, where the latter inequality holds as 
$a^2  +  b^2+ 2\sqrt{a^2 b^2 -4(a+b)^2} <(a+b)^2$ since $\sqrt{a^2 b^2 -4(a+b)^2} <ab$.
Let $\theta$ denote the angle between sides $a,b$ and note that $\theta$ is obtuse since $d^2\ge a^2+b^2$. So 
\[
d^2=a^2+b^2-2ab\cos\theta=a^2+b^2+2\sqrt{a^2b^2-a^2b^2\sin^2\theta}.
\]
So, from the definition of $d$, we have $ab\sin\theta= 2(a+b)$. Hence, since $ab\sin\theta$ is twice the area of $T$, the area of $T$ is $a+b$. Now consider the parallelogram $P$ made from two copies of $T$. From what we have just seen, $P$ is equable. It remains to show that $P$ can be realised as a lattice parallelogram, or equivalently, that  $T$ can be realised as a lattice triangle. But since $a,b$ are integers, and $d^2$ is an integer, and the area of $T$ is an integer, $T$ is \emph{geodetic}, in the terminology of Paul Yiu. (A triangle is geodetic if  
it has rational area and side lengths $\sqrt{X}, \sqrt{Y}, \sqrt{Z}$ for integers $X, Y, Z$). Thus $T$ can be realised as a lattice triangle; see the last paragraph of \cite{Yiu}.
\end{proof}

\begin{remark}
Observe that LEPs are not determined up to Euclidean motion by their area. Indeed, by Theorem \ref{T:suff}, there are LEPs with $(a,b)=(3,87), (5,85),(25,65)$ that each have area 180. 
A larger example is given by $(a,b)=(85,1525)$ and $(445,1165)$, which each have area 1610. 

\end{remark}

\begin{corollary}\label{C:rhom}\ 
\begin{enumerate}
\item[{\emph{(a)}}]  The only rhombi that are LEPs are the $4\times4$ square and the rhombus with side length 5 and area 20.
\item[{\emph{(b)}}]  If a LEP  has  sides $a,b$ with  $b=2a$, then $a=3$ or $5$.
\end{enumerate}
\end{corollary}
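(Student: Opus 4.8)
The plan is to deduce both statements directly from the classification in Theorem~\ref{T:suff}: a LEP with integer sides $a,b$ exists precisely when $a^2b^2-4(a+b)^2$ is a perfect square. In each part I would substitute the relevant relation between $a$ and $b$, show that the resulting expression is a perfect square only for finitely many $a$, and pin these down by factoring a difference of two squares.

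For part (a), a rhombus that is a LEP has equal sides $a=b$, so the condition of Theorem~\ref{T:suff} becomes that $a^4-4(2a)^2=a^2(a^2-16)$ is a perfect square. Since $a^2$ is already a square, this holds if and only if $a^2-16$ is a perfect square; in particular $a\ge 4$. Writing $a^2-16=k^2$ with $k\ge 0$ gives $(a-k)(a+k)=16$, and since the two factors have the same parity they must both be even. Enumerating the factorisations of $16$ into two even factors yields only $a=4$ (with $k=0$) and $a=5$ (with $k=3$). It then remains to identify the two rhombi geometrically: for $a=4$ one computes $\sin\theta=2(a+b)/(ab)=1$, so the rhombus is the $4\times4$ square, while for $a=5$ one gets $\sin\theta=4/5$ and hence area $a^2\sin\theta=20$, as claimed.

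Part (b) is entirely analogous. Setting $b=2a$, the condition becomes that $a^2(2a)^2-4(3a)^2=4a^2(a^2-9)$ is a perfect square, which (as $4a^2$ is a square) is equivalent to $a^2-9$ being a perfect square, forcing $a\ge 3$. Writing $a^2-9=k^2$ gives $(a-k)(a+k)=9$, whose factorisations over the positive integers produce only $a=3$ (with $k=0$) and $a=5$ (with $k=4$). Hence $a=3$ or $a=5$.

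The computations are routine; the only points that require a little care are the two reduction steps asserting that $a^2(a^2-16)$, respectively $4a^2(a^2-9)$, is a square if and only if the bracketed factor is, which rely on the leading factor being a perfect square, together with the parity constraint that governs the difference-of-squares factorisations. No serious obstacle is anticipated.
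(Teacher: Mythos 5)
Your proposal is correct and takes essentially the same approach as the paper: both use the if-and-only-if criterion of Theorem~\ref{T:suff} to reduce part (a) to $a^2-16$ being a perfect square and part (b) to $a^2-9$ being a perfect square, with existence in part (a) coming for free from the equivalence in Theorem~\ref{T:suff}. The only difference is that you spell out the difference-of-squares enumeration $(a-k)(a+k)=16$ (resp.\ $=9$) with its parity constraint, a detail the paper dismisses as obvious.
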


\begin{proof} From Theorem \ref{T:suff}, a rhombus with side length $a$ is a LEP if and only if  $a^4  -16a^2$ is a square, that is, if $a^2  -16$ is a square. But obviously this only occurs when $a$ is 4 or 5. The first case is the $4\times4$ square, while the second case is exhibited in Figure \ref{F:rhom}. Similarly,  if $b=2a$, then $4a^4-36a^2$ is a square and so $a^2-9$ is a square. Hence $a=3$ or $5$.
\end{proof}

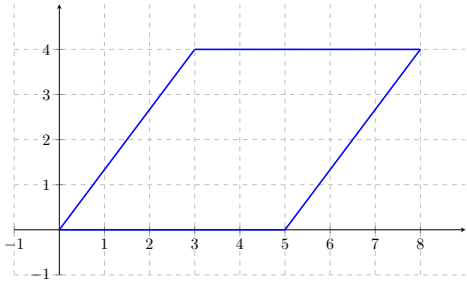
\begin{figure}[h]
\begin{tikzpicture}[scale=.6][line cap=round,line join=round,>=triangle 45,x=1cm,y=1cm]
\begin{axis}[
x=1cm,y=1cm,
axis lines=middle,
grid style=dashed,
ymajorgrids=true,
xmajorgrids=true,
xmin=-1,
xmax=9,
ymin=-1,
ymax=5,
xtick={-1,0,...,8},
ytick={-1,0,...,4},]
\clip(-3.5730154925130506,-1.5) rectangle (11.024141051110055,8.447022588171118);
\draw [line width=1pt,color=qqqqff] (8,4)-- (3,4);
\draw [line width=1pt,color=qqqqff] (3,4)-- (0,0);
\draw [line width=1pt,color=qqqqff] (0,0)-- (5,0);
\draw [line width=1pt,color=qqqqff] (5,0)-- (8,4);
\end{axis}
\end{tikzpicture}
\caption{Equable Rhombus}\label{F:rhom}
\end{figure}

We conclude this section with an elementary result whose proof will hopefully give the reader a better feel for the nature of LEPs.

\begin{theorem}\label{T:symm}
There are only three LEPs lying in the first quadrant with one vertex at the origin and a diagonal lying on the line $x=y$. They are the $4\times 4$ square, the LEP with vertices  $(0,0),(3,0),(12,12),(9,12)$, and its reflection in the line $y=x$.
\end{theorem}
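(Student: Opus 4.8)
The plan is to set up coordinates that exploit the imposed symmetry and then reduce the geometric problem to the arithmetic classification already available. Place one vertex at the origin $O(0,0)$ and let the diagonal lie along $x=y$, so the opposite vertex is $D(t,t)$ for some positive $t$. The other two vertices $A$ and $C$ are then reflections of each other across the line $y=x$ (since the parallelogram is symmetric about its diagonal, it must in fact be a rhombus, or at least the two triangles cut by the diagonal $OD$ are congruent). First I would argue that a parallelogram with a diagonal on an axis of symmetry of the figure forces the two sides meeting at $O$ to be mirror images, hence equal in length: this makes the LEP a rhombus. But by Corollary~\ref{C:rhom}(a), the only rhombic LEPs are the $4\times 4$ square and the side-$5$ rhombus of area $20$, so I would immediately suspect the statement is really about parallelograms whose \emph{diagonal} lies on $x=y$ without the whole figure being symmetric.

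On reflection the cleaner route is purely coordinate-based. Write the two vertices adjacent to $O$ as $A(p,q)$ and $C(u,v)$ with $D=A+C=(p+u,q+v)$, and impose that $D$ lies on $x=y$, i.e. $p+u=q+v$. The sides are $a=|OA|=\sqrt{p^2+q^2}$ and $b=|OC|=\sqrt{u^2+v^2}$, both integers by Lemma~\ref{L:bcint}, and the area is $|pv-qu|$. The equability condition is $|pv-qu|=2(a+b)$. The plan is to combine the diagonal constraint $p+u=q+v$ with the requirement that all four vertices lie in the first quadrant, and then run through the finitely many admissible $(a,b)$ supplied by Theorem~\ref{T:suff} together with the structural data of Theorem~\ref{T:345}. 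For each candidate pair $(a,b)$ one computes the angle $\theta$ from $\sin\theta=2(a+b)/(ab)$, places the parallelogram with $O$ at the origin and $A$ along a suitable direction, and checks whether the diagonal $OD$ can be rotated onto $x=y$ while keeping every vertex in the first quadrant with integer coordinates.

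The key computational step is to translate ``the diagonal $OD$ lies on $x=y$'' into a condition on $a$, $b$, and $\theta$. The diagonal through $O$ has length $d$ with $d^2=a^2+b^2+2\sqrt{a^2b^2-4(a+b)^2}$ (or the other sign) by Lemma~\ref{L:diag}, and it bisects neither angle in general; the angle it makes with side $OA$ is determined by the law of cosines. Requiring the configuration to sit in the first quadrant with $O$ at the origin and $OD$ along $(1,1)$ pins down the directions of $OA$ and $OC$ up to reflection. I would then use the fact that $A$ and $C$ must be lattice points: the coordinates of $A$ are $a(\cos\alpha,\sin\alpha)$ where $\alpha$ is the angle between $OA$ and the diagonal, and lattice-ness forces $a\cos\alpha,\,a\sin\alpha\in\Z$ after the $45^\circ$ rotation aligning the diagonal with an axis. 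This is where the arithmetic bites, and I expect it to cut the infinitely many LEPs down to the three listed.

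The main obstacle I anticipate is the first-quadrant constraint interacting with the integrality of the vertex coordinates: it is easy to verify that the three claimed examples work, but harder to prove that \emph{no other} LEP admits such a placement. I would handle this by showing that the diagonal condition $p+u=q+v$ plus first-quadrant positivity plus $|pv-qu|=2(a+b)$ forces one of the sides to be short (realistically $a\le $ a small bound), after which Theorem~\ref{T:345} and a direct finite check over the small-$\gcd$ families $\mathcal T_3,\mathcal T_4,\mathcal T_5$ finishes the argument. Concretely, once one diagonal is fixed along $x=y$, the second diagonal's slope and length are determined by $(a,b)$, and demanding that both endpoints have nonnegative integer coordinates is a strong divisibility condition; I would expect only $(4,4)$ and the pair giving $(0,0),(3,0),(12,12),(9,12)$ (together with its mirror image) to survive, matching the statement.
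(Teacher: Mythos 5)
Your proposal stalls exactly where the real work begins: it contains no effective bound that reduces the problem to finitely many cases, and both finiteness mechanisms you gesture at are unavailable. Theorem~\ref{T:suff} does \emph{not} supply ``finitely many admissible $(a,b)$'' --- it admits infinitely many pairs --- and your fallback (show $a$ is small, then do ``a direct finite check over $\mathcal T_3,\mathcal T_4,\mathcal T_5$'') cannot close the argument either, because for a fixed small $a$ there are still infinitely many LEPs, e.g.\ $(3,6),(3,15),(3,87),(3,582),\dots$; bounding one side leaves an infinite family to rule out, and you never derive a constraint on the second side. The phrases ``I would expect it to cut\ldots down to the three listed'' and ``I would expect only\ldots to survive'' sit precisely at the missing step. (Your opening manoeuvre is also wrong, though you retract it: a diagonal lying on $x=y$ does not make the parallelogram symmetric about that line --- the example $(0,0),(3,0),(12,12),(9,12)$ is visibly not symmetric --- so no rhombus reduction is available.)

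The paper's proof gets the needed bound from the first-quadrant hypothesis by a single elementary observation. Writing the vertices as $O(0,0)$, $A(x,y)$, $B(z,z)$, $A'(z-x,z-y)$ with $0\le y<x\le z$, the point $A$ lies in the triangle with vertices $O$, $(z,0)$, $B$, so the path $O\to A\to B$ is no longer than $O\to(z,0)\to B$; hence $\sigma:=a+b\le 2z$. The triangle $OAB$ has area $\tfrac12 z(x-y)$, so equability ($\text{area of }P=2\sigma$) gives $\tfrac12 z(x-y)=\sigma\le 2z$, i.e.\ $x-y\le 4$. That leaves four cases: $x-y=4$ forces equality in the triangle inequality, hence the $4\times4$ square; $x-y=3$ yields, after solving for $z$ and using $x\le z$, the bound $x\le 12$ and the condition that $x^2+(x-3)^2$ be a square, which holds only for $x=3$ and $x=12$, giving the two remaining LEPs; and $x-y=1,2$ are impossible since then the area ($z/2$ or $z$) is smaller than $\sigma>\sqrt2\,z$. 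Your coordinate setup ($p+u=q+v$, equability, lattice constraints after a $45^\circ$ rotation) is a reasonable frame, but without an inequality of this kind --- one that exploits the quadrant constraint to bound $x-y$ (equivalently, the distance from $A$ to the diagonal) --- the plan does not terminate.
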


\begin{proof}
Consider a parallelogram $P$ with vertices 
$O(0,0), A(x,y),B(z,z),A'(z-x,z-y)$,
in clock-wise order, where $x,y,z$ are non-negative integers and $y<x\le z$, as in diagram on the left of Figure \ref{F:symm}. Let us first calculate the area $\alpha$ of the triangle $OAB$. The diagonal $OB$ has length $\sqrt{2}z$ and the distance of $A$ to the diagonal is $(x-y)/\sqrt2$. So $\alpha=\frac12z(x-y)$. The sum $\sigma$ of the lengths $OA$ and $AB$ is
\begin{equation}\label{E:s}
\sigma=\sqrt{x^2+y^2}+\sqrt{(z-x)^2+(z-y)^2}.\end{equation}
Moreover, $\sigma$ is no greater than the sum of the lengths of the segments $0(z,0)$ and $(z,0)B$. That is, $\sigma\le 2z$. Assume that $P$ is a LEP. The equable hypothesis, $\alpha=\sigma$,  gives 
\begin{equation}\label{E:eqq}
\frac12z(x-y)\le 2z,
\end{equation}
and hence $y<x\le y+4$. So we have 4 cases to consider:

\begin{figure}[h]
\begin{tikzpicture}[scale=.75,line cap=round,line join=round,>=triangle 45,x=1cm,y=1cm]
\pgfplotsset{ticks=none}
\begin{axis}[
x=1cm,y=1cm,
axis lines=middle,
grid style=dashed,
ymajorgrids=true,
xmajorgrids=true,
xmin=-1.1,
xmax=6.1357147342069185,
ymin=-1,
ymax=6.5,
xtick={0,1,...,6},
ytick={0,1,...,6},]
\draw [line width=0.8pt,dotted,domain=-1.267579864707057:6.1357147342069185] plot(\x,{(-0-1*\x)/-1});
\draw [line width=1pt,color=qqqqff] (3,1)-- (5,5);
\draw [line width=1pt,color=qqqqff] (0,0)-- (3,1);
\draw [line width=1pt,color=qqqqff] (5,5)-- (2,4);
\draw [line width=1pt,color=qqqqff] (2,4)-- (0,0);
\draw (5,-0.26) node {$(z,0)$};
\draw [fill=uuuuuu] (0,0) circle (1.5pt);
\draw [color=black] (-0.5542415830408668,0.17805771266111486) node {$O(0,0)$};
\draw [fill=black] (3,1) circle (1.5pt);
\draw[color=black] (3.6872292809202647,0.8913959943273053) node {$A(x,y)$};
\draw [fill=black] (5,5) circle (1.5pt);
\draw[color=black] (4.6319205188025165,5.248543336396468) node {$B(z,z)$};
\draw [fill=uuuuuu] (2,4) circle (1.5pt);
\draw[color=black] (1.7014497400657351,4.4195285766222465) node {$A'(z-x,z-y)$};
\end{axis}
\end{tikzpicture}
\hskip .4cm
\begin{tikzpicture}[scale=.375][line cap=round,line join=round,>=triangle 45,x=1cm,y=1cm]
\begin{axis}[
x=1cm,y=1cm,
axis lines=middle,
grid style=dashed,
ymajorgrids=true,
xmajorgrids=true,
xmin=-2,
xmax=13,
ymin=-2,
ymax=13,
xtick={0,2,...,12},
ytick={0,2,...,12},]
\draw [line width=2pt,color=qqqqff] (12,12)-- (9,12);
\draw [line width=2pt,color=qqqqff] (9,12)-- (0,0);
\draw [line width=2pt,color=qqqqff] (0,0)-- (3,0);
\draw [line width=2pt,color=qqqqff] (3,0)-- (12,12);
\draw [dashed,line width=.5pt] (0,0)-- (12,12);
\end{axis}
\end{tikzpicture}
\caption{Parallelograms with diagonal on line $y=x$}\label{F:symm}
\end{figure}
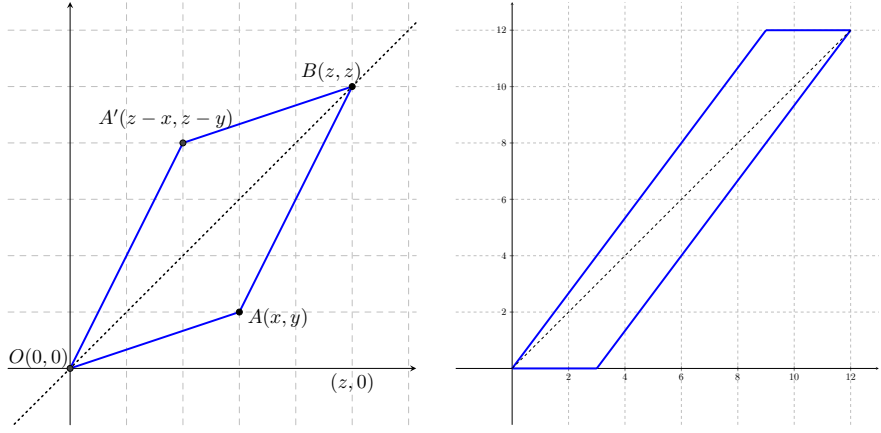

 Case $x= y+4$. Here \eqref{E:eqq} is an equality, from which we conclude that the given parallelogram is a square, with side length $z$. So, by equability, $z^2=4z$ and hence $z=4$. So $A=(4,0),B=(4,4)$.

Case $x= y+3$. Substituting this in \eqref{E:s}, equability gives
\[
\frac32z=\sqrt{x^2+(x-3)^2}+ \sqrt{(z-x)^2+(z-x+3)^2}.
\]
Writing this as $3z-2\sqrt{x^2+(x-3)^2}= 2\sqrt{(z-x)^2+(z-x+3)^2}$ and squaring both sides  and simplifying gives
$9z^2  -12z\sqrt{x^2+(x-3)^2} =4(2z^2   -2z(2x-3) )$.  
Rearranging and dividing by $z$ gives
$z = 4(6 - 4 x + 3 \sqrt{x^2+(x-3)^2})$.
In particular, $x^2+(x-3)^2$ is a  square. Moreover, $x\le z$ gives 
$4(6 - 4 x + 3 \sqrt{x^2+(x-3)^2})\ge x$
 and so
$144(x^2+(x-3)^2)\ge (17x-24)^2$
and simplifying, 
\[
0\ge x^2 +48x -720=(x+60)(x-12).
\]
Thus $x\le12$. Calculations show that the only such values for which $x^2+(x-3)^2$ is a  square are $x=3$ and $x=12$. These values correspond to the parallelogram $(0,0),(3,0),(12,12),(9,12)$, and its reflection in the line $y=x$; see the diagram on the right of Figure \ref{F:symm}.

Cases $x= y+2$ and $x= y+1$.  The area is respectively $z$ and $z/2$ in these cases, but the sum $\sigma$ of the sides is greater than the length of the diagonal, so $\sigma>\sqrt2 z$. Thus equability is impossible in both cases.
\end{proof}

\begin{remark}
The above result does not hold without the assumption that the LEPs lies in the first quadrant. For example, consider the following 6 LEPs with vertices $O(0,0), A(x,y),B(z,z),A'(z-x,z-y)$,
in clock-wise order:
\begin{align*}
&A=(0,-3),\ B=(60,60),\\
&A=(0,-4),\ B=(12,12),\\
&A=(-12,-16),\ B=(68,68),\\
&A=(-9,-12),\ B=(348,348),\\
&A=(-60,-63),\ B=(2028,2028),\\
&A=(-80,-84),\ B=(396,396).
\end{align*}
Notice incidentally that in all these cases $x-y\le 4$, as in the proof of the above theorem, even though in these cases the diagonal  on $y=x$ is the short diagonal of the LEP. This is a general fact; see Theorem \ref{T:altbd} below.
\end{remark}


\section{The LEP restriction: $\gcd(a,b)=3,4,5$}\label{S:345}

The aim of this section is to prove Theorem \ref{T:345}. We first show that the elements of the sets $\mathcal T_3,\mathcal T_4,\mathcal T_5$ have the required property.
 First suppose that $(a,b)=(3q, 3r)\in \mathcal T_3$, where for positive integers $m,n$ we have
 $ m^2 + n^2 + q^2 = 3mnq$ and $q <r =3mn-q$. 
  Hence $q,r$ are the two solutions of the quadratic equation
 \[
 Z^2-3mnZ+ (m^2+n^2)=0,
 \]
 in $Z$, and so $q+r=3mn$ and $qr=m^2+n^2$. Thus we have
 \[
 a^2b^2-4(a+b)^2=9^2q^2r^2-4\cdot 9(q+r)^2=9^2((m^2+n^2)^2-4m^2n^2)=9^2(m^2-n^2)^2,
 \]
 which is a square, as required. Similarly, suppose that $(a,b)=(4q, 4r)\in \mathcal T_4$, where for positive integers $m,n$ we have
 $ m^2 + n^2 + 2q^2 = 4mnq$ and $q <r =2mn-q$. Here $q,r$ are the two solutions of the quadratic equation
 \[
 2Z^2-4mnZ+ (m^2+n^2)=0,
 \]
 and so $q+r=2mn$ and $qr=\frac12(m^2+n^2)$. Thus we have
 $a^2b^2-4(a+b)^2=8^2(m^2-n^2)^2$,
 which is again a square. Finally, suppose that $(a,b)=(5q, 5r)\in \mathcal T_5$, where for positive integers $m,n$ we have
 $ m^2 + n^2 + 5q^2 = 5mnq$ and $q <r =mn-q$. Here $q,r$ are the two solutions of the quadratic equation
 \[
 5Z^2-5mnZ+ (m^2+n^2)=0,
 \]
 and so $q+r=mn$ and $qr=\frac15(m^2+n^2)$. Thus we have
 $a^2b^2-4(a+b)^2=5^2(m^2-n^2)^2$,
 which is again a square.

We now show that every solution is in $\mathcal T_3\cup \mathcal T_4\cup \mathcal T_5$.
The main tool we use  is Rosenberger's Theorem on generalised Markov equations. Recall that in \cite{Ro} Rosenberger considered equations of the form 
\begin{equation}\label{E:Ro}
ax^2+by^2+cz^2=dxyz,
\end{equation}
where $a,b,c$ are pairwise relatively prime positive integers with $a\le b\le c$ such that $a,b,c$ all divide $d$. We are only interested in positive integer solutions, that is, $x,y,z\in\N$, so we use the word \emph{solution} to mean positive integer solution. Rosenberger's remarkable result is that only 6 such equations have a solution and when such a solution exists, there are infinitely many solutions. We use the R1--R5 notation of \cite{BU}.

\begin{RT}[\cite{Ro}]
Equation \eqref{E:Ro} only has a solution in the following 6 cases:
\begin{enumerate}
\item[{\emph{M:}}] $x^2 + y^2 + z^2 = 3xyz$ (Markov's equation),
\item[{\emph{R1:}}] $x^2 + y^2 + 2z^2 = 4xyz$,
\item[{\emph{R2:}}]  $x^2 + 2y^2 + 3z^2 = 6xyz$,
\item[{\emph{R3:}}]  $x^2 + y^2 + 5z^2 = 5xyz$,
\item[{\emph{R4:}}]  $x^2 + y^2 + z^2 = xyz$,
\item[{\emph{R5:}}]  $x^2 + y^2 + 2z^2 = 2xyz$,
\end{enumerate}
\end{RT}

\begin{remark}\label{R:equivs}
As mentioned in \cite{Ro}, it is easy to see by considering R4 modulo 3, that if $(x,y,z)$ is a solution to R4, then $x,y,z$ are each divisible by 3, and  $(x,y,z)$ is a solution to  Markov's equation if and only if $(3x,3y,3z)$ is a solution to R4.  
Similarly, by considering R5 modulo 4,  if $(x,y,z)$ is a solution to R5, then $x,y,z$ are each even, and $(x,y,z)$ is a solution to R1 if and only if $(2x, 2y, 2z)$ is a solution to R5. 
Concerning the other cases, it is easy to see from the theorem that for M, R1, R2 and R3, if $(x,y,z)$ is a solution, then $x,y,z$ are relatively prime.
\end{remark}

Returning to LEPs, suppose that $a,b$ are positive integers such that $a^2 b^2 -4(a+b)^2$ is a square, and $a\le b$. Using the standard characterisation of Pythagorean triples  we have relatively prime positive integers $m,n$ and a  positive integer $k$ such that either
\begin{equation}\label{E:py1}
ab= k(m^2+n^2), \quad 2(a+b)=2kmn,\quad \sqrt{a^2 b^2 -4(a+b)^2}= k(n^2-m^2),
\end{equation}
or 
\begin{equation}\label{E:py2}
ab= k(m^2+n^2), \quad \sqrt{a^2 b^2 -4(a+b)^2}=2kmn,\quad 2(a+b)= k(n^2-m^2).
\end{equation}
We will show that the second possibility can be reduced to the first. Indeed, suppose that \eqref{E:py2} holds. Assume for the moment that $k$ is odd. As $\sqrt{a^2 b^2 -4(a+b)^2}=2kmn$,  it follows that  $ab$ is even.
Consider  the equation $ab= k(m^2+n^2)$. As $ab$ is even and $k$ is odd and $m,n$ are relatively prime, $m,n$ must both be odd. Thus, from $ab= k(m^2+n^2)$, we have $ab\equiv 2 \pmod 4$. So exactly one of $a,b$ is even. Thus $2(a+b) \equiv 2 \pmod 4$. But as $m,n$ are all odd, $n^2-m^2\equiv 0 \pmod 4$, contradicting $2(a+b)= k(n^2-m^2)$.
Thus $k$ is even, $k=2\k$ say. Let $y=m+n,x=n-m$, so $n=\frac{y+x}2,m=\frac{y-x}2$. Then \eqref{E:py2} can be written
\[
ab=k(m^2+n^2)=\k(x^2+y^2),\quad  2(a+b)=k(n^2-m^2)=2\k xy,\quad 2kmn=\k(y^2-x^2),
\]
which has the same form as \eqref{E:py1}. Since $m,n$ are relatively prime, $\gcd(x,y)=1$ or $2$. In the latter case we can replace $x$ and $y$ by $x'=x/2$ and $y'=y/2$ respectively, and replace $\kappa$ by $\kappa'=4\kappa$. We then have the same form as \eqref{E:py1} again but now $x',y'$ are relatively prime. We may therefore assume \eqref{E:py1} for what follows.

 Note that $a,b$ are solutions to the equation
\begin{equation}\label{E:xbc}
x^2-kmn x+k(m^2+n^2)=0.
\end{equation}
In particular, we have
\begin{equation}\label{E:xb}
km^2+kn^2+a^2=kamn.
\end{equation}
Let $k=fs^2$, where $f$ is square-free.
From \eqref{E:xb}, $f$ divides $a^2$ and hence $f$ divides $a$. Let $a=f\a$. Dividing \eqref{E:xb} by $f$ gives
\begin{equation}\label{E:xb2}
s^2m^2+s^2n^2+f\a^2=fs^2 mn\a.
\end{equation}
From \eqref{E:xb2}, $s^2$ divides $f\a^2$ and hence $s^2$ divides $\a^2$, and thus $s$ divides $\a$. Let $\a=s\g$.  Dividing \eqref{E:xb2} by $s^2$ gives
\begin{equation}\label{E:xb3}
m^2+n^2+f\g^2=f\!s\, mn\g.
\end{equation}
Thus by Rosenberger's Theorem, using $x=m,y=n,z=\g$, there are five possibilities: 
\begin{itemize}
\item \eqref{E:xb3} is M, $f=1, s=3,k=9, a=3q$ and $b=kmn-a=3(3mn-q)$.
\item \eqref{E:xb3} is R1, $f=2, s=2,k=8, a=4q$ and $b=kmn-a=4(2mn-q)$.
\item  \eqref{E:xb3} is R3, $f=5, s=1,k=5, a=5q$ and $b=kmn-a=5(mn-q)$.
\item \eqref{E:xb3} is R4, $f=1, s=1,k=1, a=q$ and $b=kmn-a=mn-q$.
\item \eqref{E:xb3} is R5, $f=2, s=1,k=2, a=2q$ and $b=kmn-a=2(mn-q)$. 
\end{itemize}
In the first three cases we have $(a,b)\in \mathcal T_3,\mathcal T_4,\mathcal T_5$ respectively as in Theorem \ref{T:345}. We now eliminate the last two cases.
In the second last case,  \eqref{E:xb3} is $m^2+n^2+\g^2= mn\g$. But then as noted in Remark \ref{R:equivs}, the values $m,n,\g$ are all divisible by 3, which contradicts our assumption that $m,n$ are relatively prime.
Similarly, in the last case,  \eqref{E:xb3} is $m^2+n^2+2\g^2=2 mn\g$. But then as noted in Remark \ref{R:equivs}, the values  $m,n,\g$ are all even, again contradicting our assumption that $m,n$ are relatively prime.

 It remains to show that for every solution $(a,b)$, one has $\gcd(a,b)=3,4$ or 5 (thus showing that $\mathcal T_3,\mathcal T_4,\mathcal T_5$ are disjoint).
 As before, we have relatively prime $m,n$ with
\[
ab= k(m^2+n^2),\quad 2(a+b)=2kmn,\quad \sqrt{a^2 b^2 -4(a+b)^2}= k(n^2-m^2),
\]
where from above, $k$ is either 5, 8 or 9.

\begin{lemma}\label{L:primed} The numbers $k$ and $\gcd(a,b)$ have the same prime divisors.
\end{lemma}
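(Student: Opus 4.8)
The plan is to prove the set of prime divisors of $k$ and the set of prime divisors of $g:=\gcd(a,b)$ coincide by establishing the two inclusions separately. Throughout I would work from the two symmetric relations already in hand, namely $a+b=kmn$ and $ab=k(m^2+n^2)$ (these are just Vieta's relations for the quadratic \eqref{E:xbc}), together with the factorisations $k=fs^2$ and $a=fs\g$ produced in the derivation immediately preceding the lemma. Crucially, $\gcd(m,n)=1$, and I expect this coprimality to be the one genuinely arithmetic ingredient.

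For the inclusion $\mathrm{primes}(k)\subseteq\mathrm{primes}(g)$, I would take a prime $p\mid k$. Since $k=fs^2$, such a $p$ divides $fs$, and because $a=fs\g$ this gives $p\mid a$ at once. On the other hand $p\mid k$ forces $p\mid kmn=a+b$, and hence $p\mid b$ as well. Therefore $p\mid g$. This direction is essentially free, reading off directly from the shape of $a$ and $k$ in the construction, and it requires no case analysis on the value of $k$.

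The reverse inclusion $\mathrm{primes}(g)\subseteq\mathrm{primes}(k)$ is where the coprimality of $m,n$ does the work. I would take a prime $p\mid g$, so $p\mid a$ and $p\mid b$, whence $p\mid a+b=kmn$ and $p\mid ab=k(m^2+n^2)$. Suppose, for contradiction, that $p\nmid k$. Then $p\mid kmn$ gives $p\mid mn$, and since $\gcd(m,n)=1$ the prime $p$ divides exactly one of $m,n$; say $p\mid m$ and $p\nmid n$. Then $m^2+n^2\equiv n^2\not\equiv0\pmod p$, so $p\nmid m^2+n^2$. Combined with $p\nmid k$ this yields $p\nmid k(m^2+n^2)=ab$, contradicting $p\mid a$. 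Hence $p\mid k$, and by symmetry the same conclusion follows if instead $p\mid n$.

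I expect the only real (and minor) obstacle to be bookkeeping in the reverse direction: keeping the coprimality argument symmetric in $m$ and $n$, and being careful to invoke only $p\mid ab$ rather than the stronger divisibility one might be tempted to use. No further ideas beyond the two Vieta relations and $\gcd(m,n)=1$ appear to be needed; in particular the explicit values $k\in\{5,8,9\}$ play no role, so the statement in fact holds at the level of the general factorisation $k=fs^2,\ a=fs\g$.
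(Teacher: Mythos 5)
Your proof is correct, and your converse direction is genuinely simpler than the paper's. The paper argues from the stronger divisibility $p^2\mid ab=k(m^2+n^2)$: it deduces $p^2\mid m^2+n^2$ and $p\mid mn$, hence $p\mid(m+n)^2$ and $p\mid(m-n)^2$, so $p\mid 2m$ and $p\mid 2n$; for odd $p$ this contradicts $\gcd(m,n)=1$ at once, but the case $p=2$ must then be excluded by a separate parity argument ($k$ odd and $m,n$ both odd force $a+b=kmn$ odd, contradicting $a,b$ both even). Your version, using only $p\mid ab$ as you anticipated, short-circuits all of this: $p\mid mn$ with $\gcd(m,n)=1$ means $p$ divides exactly one of $m,n$, whence $p\nmid m^2+n^2$ and so $p\nmid k(m^2+n^2)=ab$, a contradiction uniform in $p$ with no case split --- in effect you have streamlined the paper's own argument. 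In the easy direction the paper works from the quadratic relation \eqref{E:xbc}, namely $a^2-kmn\,a+k(m^2+n^2)=0$ and its analogue for $b$, to get $p\mid a^2$ and $p\mid b^2$ directly, whereas you invoke the factorisations $k=fs^2$ and $a=fs\g$ from the derivation preceding the lemma; both are legitimate at the point where the lemma is stated, though the paper's route has the minor advantage of not depending on how $a$ was decomposed. Your closing observation is also accurate: neither proof uses $k\in\{5,8,9\}$, so the lemma indeed holds for any $k$ arising as in \eqref{E:py1} with $\gcd(m,n)=1$.
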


\begin{proof}
From \eqref{E:xbc},
$a^2-kmn a+k(m^2+n^2)=0$ and $b^2-kmn b+k(m^2+n^2)=0$.  So if $p$ is any prime divisor of $k$, then $p$ divides $a^2$ and $b^2$, so $p$ divides $a$ and $b$. Hence $p$ divides  $\gcd(a,b)$. 

Conversely, let $p$ be any prime divisor of $\gcd(a,b)$, and suppose that $p$ does not divide $k$. Then $p^2$ divides $m^2+n^2$ and $p$ divides $mn$ so $p$ divides $(m+n)^2$ and $(m-n)^2$, so $p$ divides $m+n$ and $m-n$. Hence $p$ divide $2m$ and $2n$. If $p$ were odd we would have that $p$ divides $m$ and $n$, which is impossible as $m,n$ are relatively prime. So $p=2$; that is, $a,b$ are both even. Since $p$ does not divide $k$, we have that $k$ is odd and $m^2+n^2$ is even. So, as $m,n$ are relatively prime, $m,n$ are both odd. But then $a+b=kmn$ implies that $k$ is even, a contradiction. So $p$ divides $k$.
\end{proof}

Now suppose that $(a,b)\in \mathcal T_3$. So $(a,b)=(3q, 3r)$, where $m^2 + n^2 + q^2 = 3mnq$ and $r=3mn-q$. By the above lemma,  $\gcd(a,b)$ is a power of 3. Assume that $q$ is divisible by 3. Then $m^2 + n^2\equiv 0\pmod 3$ and hence $m,n\equiv 0\pmod 3$. But this contradicts the fact that
$m,n$ are relatively prime. So $\gcd(a,b)=3$.

Similarly, suppose that $(a,b)\in \mathcal T_4$. So $(a,b)=(4q, 4r)$, where $m^2 + n^2 + 2q^2 = 4mnq$. By the above lemma,  $\gcd(a,b)$ is a power of 2. Assume that $q$ is even. Then $m^2 + n^2\equiv 0\pmod 4$ and hence $m,n\equiv 0\pmod 4$, contradicting the fact that
$m,n$ are relatively prime. So $\gcd(a,b)=4$.

Finally, suppose that $(a,b)\in \mathcal T_5$. So $(a,b)=(5q, 5r)$, where $m^2 + n^2 + 5q^2 = 5mnq$ and $r =mn-q$. By the above lemma,  $\gcd(a,b)$ is a power of 5. Assume that $q,r$ are both divisible by 5. Then $mn=q+r \equiv 0\pmod 5$, so either $m$ or $n$ is divisible by 5. But we also have $m^2 + n^2\equiv 0\pmod 5$ and hence $m$ and $n$ are both divisible by 5. But this again contradicts the fact that
$m,n$ are relatively prime. So $\gcd(a,b)=5$.

This completes the proof of Theorem \ref{T:345}.\hfill\qed

\begin{remark}\label{R:div}
 In the case $\gcd(a,b)=3$, it is evident from the above proof that $a+b$ is divisible by $3^2$, but   neither $a$ nor $b$ is divisible by $3^2$. Similarly, if $\gcd(a,b)=4$,  then $a+b$ is divisible by $2^3$ but  neither $a$ nor $b$ is divisible by $2^3$.  However, a similar result does not hold in the $\gcd(a,b)=5$ case. For example, for the LEP with $a=85,b=  1525$ one has  $a=5\cdot 17, b=5^2 \cdot 61$. There is also an obvious restriction on the prime divisors of $a$ or $b$. Suppose that $(a,b)\in \mathcal T_3$, so $(a,b)=(3q, 3r)$ as before. We saw at the beginning of this section that 
 $qr=m^2+n^2$, a sum of two squares. Hence $q$ and $r$ have no prime divisor congruent to $3 \pmod 4$.
 By the same reasoning, if $(a,b)$ belongs to $\mathcal T_4$ or $\mathcal T_5$, then $a$ and $b$ have no prime divisor congruent to $3 \pmod 4$.

\end{remark}


\section{The forest of LEPs}\label{S:forest}

For a given $k$ ($=5,8$ or $9$) consider the set $S_k$ of solutions of the corresponding Markov-Rosenberger equation. Let us briefly recall Rosenberger's theory. For the reader's convenience, we recall \eqref{E:xb3}:
\[
m^2+n^2+f\g^2=f\!s\, mn\g.
\]
Following the presentation given in \cite{BU}, from a solution $x = (m,n,\g)$ to \eqref{E:xb3}, one can generate three new solutions by
applying the involutions:
\begin{align*}
\phi_1(x)&= (fs n\g-m,n,\g)\\
\phi_2(x)&= (m,fsm\g-n,\g)\\
\phi_3(x)&= (m,n,smn-\g) .
\end{align*}
The group of transformations of $S_k$ generated by the maps $\phi_i$ is the free product of three copies of $\Z_2$, and this group acts transitively on $S_k$.
Moreover, the  maps $\phi_i$ give the set $S_k$ of solutions the structure of an infinite  binary tree: each solution is a vertex and two solutions are connected by an edge if  one of the  maps $\phi_i$ sends one solution to the other.
The \emph{fundamental solutions} have the smallest values of $m+n+\g$; in M,R1,R3 they are $(m,n,\g)=(1,1,1),(1,1,1),(1,2,1)$ respectively. 

Having recalled Rosenberger's theory, we now describe how the solution trees $S_k$ of the Markov-Rosenberger equations determine the induced structure on the set $\mathcal T_{gcd}$ of  LEPs, where $gcd=3,4,5$ for $k=9,8,5$ respectively. We identify $\mathcal T_{gcd}$ as the set of ordered pairs $(a,b)$ of possible side lengths  with $\gcd(a,b)={gcd}$ and $a\le b$. A solution $(m,n,\g)$ to the corresponding Markov-Rosenberger equation corresponds to a LEP if  $n\ge m$. Let $\tau$ denote the involution of $S_k$ given by $\tau:(m,n,\g)\mapsto (n,m,\g)$. The map $\tau$ commutes with $\phi_3$, and conjugates $\phi_1$ to $\phi_2$. We form the quotient tree $\bar S_k=S_k/\tau$. The elements of $\bar S_k$ can be regarded as triples $(m,n,\g)$ with $m\le n$.
The map $\phi_3$ induces an involution $\bar \phi_3$ on $\bar S_k$. Note that by definition $ \phi_3$ leaves $m$ and $n$  unchanged and sends $\g$ to $smn-\g$, or equivalently, $a$ to $kmn-a=a+b-a=b$. Thus $(m,n,\g)$ and $ \phi_3(m,n,\g)$ correspond to the same LEP. Consequently we form $\mathcal T_{gcd}$ by contracting each of the edges of $\bar S_k$ that are given by the map $\bar \phi_3$; see Figure \ref{F:contract}. Note that contraction of edges of a tree produces another tree. So $\mathcal T_{gcd}$ is a tree for each ${gcd}=3,4,5$. Notice also that contraction turns vertices of degree 3 into vertices of degree 4. The trees $\mathcal T_3$ and $\mathcal T_5$ have the same form; they are shown in Figure \ref{F:tree35}. The tree $\mathcal T_4$  is shown in Figure \ref{F:tree4}. The \emph{fundamental solutions} are the solutions $(a,b)$ for which $a+b$ is minimal. For $\gcd(a,b)=3,4,5$, the fundamental solutions are $(3,6),(4,4),(5,5)$ respectively.
Apart from the fundamental solutions, the vertices of $\mathcal T_{gcd}$ all have degree 4. The fundamental solutions of $\mathcal T_3$ and $\mathcal T_5$ have degree 2, while the fundamental solution  of $\mathcal T_4$ has degree  1.

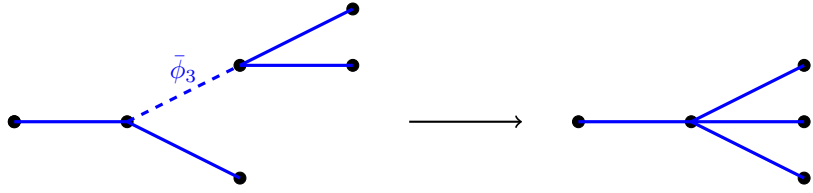
\begin{figure}[h]
\begin{tikzpicture}[scale=1.5][line cap=round,line join=round,>=triangle 45,x=1cm,y=.5cm]
\def\y{.5};
\def\d{5};
\draw [fill=black] (-1,0) circle (1.5pt);
\draw [fill=black] (0,0) circle (1.5pt);
\draw [fill=black] (1,\y) circle (1.5pt);
\draw [fill=black] (1,-\y) circle (1.5pt);
\draw [fill=black] (-1,0) circle (1.5pt);
\draw [fill=black] (2,2*\y) circle (1.5pt);
\draw [fill=black] (2,\y) circle (1.5pt);
\draw [line width=1.2pt,dashed,color=qqqqff] (0,0)-- (1,\y) node[midway,above] {$\bar\phi_3$};
\draw [line width=1.2pt,color=qqqqff] (0,0)-- (1,-\y) ;
\draw [line width=1.2pt,color=qqqqff] (0,0)-- (-1,0) ;
\draw [line width=1.2pt,color=qqqqff] (1,\y)-- (2,2*\y) ;
\draw [line width=1.2pt,color=qqqqff] (1,\y)-- (2,\y) ;
\draw [->,line width=0.8pt] (2.5,0) -- (3.5,0);
draw [fill=black] (\d-1,0) circle (1.5pt);
\draw [fill=black] (\d,0) circle (1.5pt);
\draw [fill=black] (\d+1,-\y) circle (1.5pt);
\draw [fill=black] (\d-1,0) circle (1.5pt);
\draw [fill=black] (\d+1,\y) circle (1.5pt);
\draw [fill=black] (\d+1,0) circle (1.5pt);
\draw [line width=1.2pt,color=qqqqff] (\d,0)-- (\d+1,-\y) ;
\draw [line width=1.2pt,color=qqqqff] (\d,0)-- (\d-1,0) ;
\draw [line width=1.2pt,color=qqqqff] (\d,0)-- (\d+1,\y) ;
\draw [line width=1.2pt,color=qqqqff] (\d,0)-- (\d+1,0) ;
\end{tikzpicture}
\caption{Contraction of the $\bar \phi_3$ edges}\label{F:contract}
\end{figure}

Let us look in more detail at the edges of the LEP solution trees $\mathcal T_{gcd}$.
Consider a LEP $(a,b)$. As we saw in the previous section, $k$ is determined by $\gcd(a,b)$; $k=9,8,5$ for $\gcd(a,b)=3,4,5$ respectively.   
Writing $ab= k(m^2+n^2), a+b=kmn$ as before, and imposing $n\ge m$, the values $m,n$ are given by the following formulae:
\begin{align}
kn^2&=\frac{ab+\sqrt{a^2 b^2 -4(a+b)^2}}{2}\label{E:n}\\
km^2&=\frac{ab-\sqrt{a^2 b^2 -4(a+b)^2}}{2}.\label{E:m}
\end{align}
For each $i=1,2$, let $\v_i$ denote the function induced by $\phi_i$ on the set $\mathcal T_{gcd}$ of LEP pairs $(a,b)$. Note that in the above notation, as $a+b=kmn$,
we have $(a,b)=(fs\g, kmn-fs\g)$.
From the definition of $\phi_1$, the  map $\v_1$ leaves $a$ and $n$  unchanged and $m$ is changed to $m'= an-m$. Then under $\v_1$, the value of $b$ is changed to
\begin{align*}
b'&= km'n-a=kan^2-knm-a=kan^2-(a+b)-a\\
&=\frac{a^2b+ a \sqrt{a^2b^2-4(a+b)^2}}2-2a-b. \quad  (\text{from}\  \eqref{E:n})
\end{align*}
Thus
\[
\v_1: (a,b) \mapsto \left(a,\frac{a^2b+ a \sqrt{a^2b^2-4(a+b)^2}}2-2a-b\right).
\]
Similarly, one finds that
\[
\v_2: (a,b) \mapsto \left(a,\frac{a^2b- a \sqrt{a^2b^2-4(a+b)^2}}2-2a-b\right).
\]
A calculation shows that  $\v_2\circ\v_1=\id$. However it is not true that $\v_1\circ\v_2(a,b)=(a,b)$ for all $(a,b)$. In fact, the function $\v_2$ is not injective and the function $\v_1$ is not surjective; there is no solution $(a,b)$ with $a<b$ for which $\v_1(a,b)$ is the fundamental solution.

Similarly, analogous to $\v_1,\v_2$, interchanging the roles of $a$ and $b$, we have two further maps:
\begin{align*}
\psi_1: (a,b) &\mapsto \left(\frac{ab^2+b\sqrt{a^2b^2-4(a+b)^2}}2-2b-a,b\right),\\
\psi_2: (a,b) &\mapsto \left(\frac{ab^2-b\sqrt{a^2b^2-4(a+b)^2}}2-2b-a,b\right).
\end{align*}
After applying the maps $\v_i,\psi_i$, one reverses the image $(a',b')$ if necessary so that $a'\le b'$. The edges in the trees $\mathcal T_{gcd}$ are all obtained by applications of the four maps $\v_1,\v_2,\psi_1,\psi_2$.

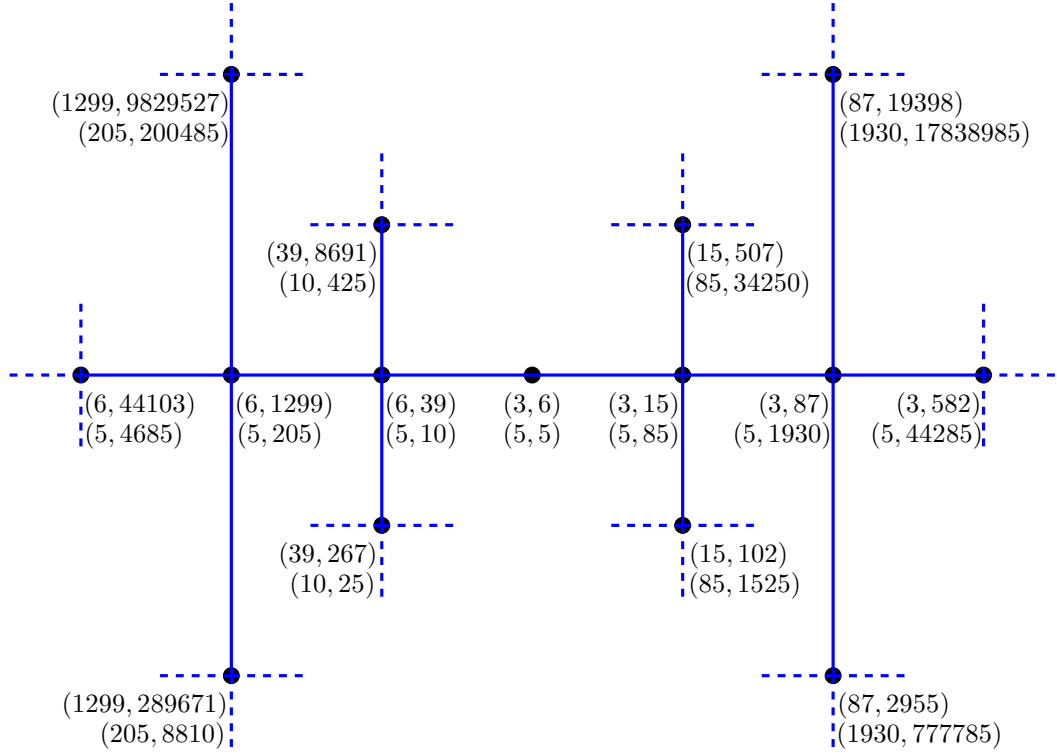
\begin{figure}[h]
\begin{tikzpicture}[scale=2][line cap=round,line join=round,>=triangle 45,x=1cm,y=1cm]
\def\fx{0};
\def\fy{0};
\def\rx{1};
\def\ry{0};
\def\rrx{2};
\def\rry{0};
\def\rrrx{3};
\def\rrry{0};
\def\rux{1};
\def\ruy{1};
\def\rdx{1};
\def\rdy{-1};
\def\rrux{2};
\def\rruy{2};
\def\rrdx{2};
\def\rrdy{-2};
\def\lx{-1};
\def\ly{0};
\def\lux{-1};
\def\luy{1};
\def\ldx{-1};
\def\ldy{-1};
\def\llx{-2};
\def\lly{0};
\def\llux{-2};
\def\lluy{2};
\def\lldx{-2};
\def\lldy{-2};
\def\lllx{-3};
\def\llly{0};
\draw [fill=black] (\fx,\fy) circle (1.5pt);
\draw[color=black] (\fx,\fy-.2) node {$(3,6)$};
\draw[color=black] (\fx,\fy-.4) node {$(5,5)$};
\draw [fill=black] (\rx,\ry) circle (1.5pt);
\draw[color=black] (\rx-.26,\ry-.2) node {$(3,15)$};
\draw[color=black] (\rx-.26,\ry-.4) node {$(5,85)$};
\draw [fill=black] (\lx,\ly) circle (1.5pt);
\draw[color=black] (\lx+.26,\ly-.2) node {$(6,39)$};
\draw[color=black] (\lx+.26,\ly-.4) node {$(5,10)$};
\draw [fill=black] (\rrx,\rry) circle (1.5pt);
\draw[color=black] (\rrx-.26,\rry-.2) node {$(3,87)$};
\draw[color=black] (\rrx-.34,\rry-.4) node {$(5,1930)$};
\draw [fill=black] (\rrrx,\rrry) circle (1.5pt);
\draw[color=black] (\rrrx-.3,\rrry-.2) node {$(3,582)$};
\draw[color=black] (\rrrx-.38,\rrry-.4) node {$(5,44285)$};
\draw [fill=black] (\llx,\lly) circle (1.5pt);
\draw[color=black] (\llx+.35,\lly-.2) node {$(6,1299)$};
\draw[color=black] (\llx+.32,\lly-.4) node {$(5,205)$};
\draw [fill=black] (\rux,\ruy) circle (1.5pt);
\draw[color=black] (\rux+.35,\ruy-.2) node {$(15,507)$};
\draw[color=black] (\rux+.43,\ruy-.4) node {$(85,34250)$};
\draw [fill=black] (\rdx,\rdy) circle (1.5pt);
\draw[color=black] (\rdx+.37,\rdy-.2) node {$(15,102)$};
\draw[color=black] (\rdx+.41,\rdy-.4) node {$(85,1525)$};
\draw [fill=black] (\rrux,\rruy) circle (1.5pt);
\draw[color=black] (\rrux+.45,\rruy-.2) node {$(87,19398)$};
\draw[color=black] (\rrux+.67,\rruy-.4) node {$(1930,17838985)$};
\draw [fill=black] (\rrdx,\rrdy) circle (1.5pt);
\draw[color=black] (\rrdx+.4,\rrdy-.2) node {$(87,2955)$};
\draw[color=black] (\rrdx+.57,\rrdy-.4) node {$(1930,777785)$};
\draw [fill=black] (\lux,\luy) circle (1.5pt);
\draw[color=black] (\lux-.4,\luy-.2) node {$(39,8691)$};
\draw[color=black] (\lux-.36,\luy-.4) node {$(10,425)$};
\draw [fill=black] (\ldx,\ldy) circle (1.5pt);
\draw[color=black] (\ldx-.36,\ldy-.2) node {$(39,267)$};
\draw[color=black] (\ldx-.33,\ldy-.4) node {$(10,25)$};
\draw [fill=black] (\lllx,\llly) circle (1.5pt);
\draw[color=black] (\lllx+.39,\llly-.2) node {$(6,44103)$};
\draw[color=black] (\lllx+.35,\llly-.4) node {$(5,4685)$};
\draw [fill=black] (\llux,\lluy) circle (1.5pt);
\draw[color=black] (\llux-.61,\lluy-.2) node {$(1299,9829527)$};
\draw[color=black] (\llux-.52,\lluy-.4) node {$(205,200485)$};
\draw [fill=black] (\lldx,\lldy) circle (1.5pt);
\draw[color=black] (\lldx-.58,\lldy-.2) node {$(1299,289671)$};
\draw[color=black] (\lldx-.46,\lldy-.4) node {$(205,8810)$};
\draw [line width=1.2pt,color=qqqqff] (\fx,\fy)-- (\rx,\ry);
\draw [line width=1.2pt,color=qqqqff] (\rx,\ry)-- (\rrx,\rry);
\draw [line width=1.2pt,color=qqqqff] (\fx,\fy)-- (\lx,\ly);
\draw [line width=1.2pt,color=qqqqff] (\rx,\ry)-- (\rux,\ruy);
\draw [line width=1.2pt,color=qqqqff] (\rx,\ry)-- (\rdx,\rdy);
\draw [line width=1.2pt,color=qqqqff] (\rrx,\rry)-- (\rrrx,\rrry);
\draw [line width=1.2pt,color=qqqqff] (\rrx,\rry)-- (\rrux,\rruy);
\draw [line width=1.2pt,color=qqqqff] (\rrx,\rry)-- (\rrdx,\rrdy);
\draw [line width=1.2pt,color=qqqqff] (\lx,\ly)-- (\llx,\lly);
\draw [line width=1.2pt,color=qqqqff] (\lx,\ly)-- (\lux,\luy);
\draw [line width=1.2pt,color=qqqqff] (\lx,\ly)-- (\ldx,\ldy);
\draw [line width=1.2pt,color=qqqqff] (\lllx,\llly)-- (\llx,\lly);
\draw [line width=1.2pt,color=qqqqff] (\llux,\lluy)-- (\llx,\lly);
\draw [line width=1.2pt,color=qqqqff] (\lldx,\lldy)-- (\llx,\lly);
\draw [line width=1.2pt,dashed,color=qqqqff] (\rux,\ruy)-- (\rux+.5,\ruy);
\draw [line width=1.2pt,dashed,color=qqqqff] (\rux,\ruy)-- (\rux,\ruy+.5);
\draw [line width=1.2pt,dashed,color=qqqqff] (\rux,\ruy)-- (\rux-.5,\ruy);
\draw [line width=1.2pt,dashed,color=qqqqff] (\rdx,\rdy)-- (\rdx+.5,\rdy);
\draw [line width=1.2pt,dashed,color=qqqqff] (\rdx,\rdy)-- (\rdx,\rdy-.5);
\draw [line width=1.2pt,dashed,color=qqqqff] (\rdx,\rdy)-- (\rdx-.5,\rdy);
\draw [line width=1.2pt,dashed,color=qqqqff] (\rrux,\rruy)-- (\rrux+.5,\rruy);
\draw [line width=1.2pt,dashed,color=qqqqff] (\rrux,\rruy)-- (\rrux,\rruy+.5);
\draw [line width=1.2pt,dashed,color=qqqqff] (\rrux,\rruy)-- (\rrux-.5,\rruy);
\draw [line width=1.2pt,dashed,color=qqqqff] (\rrdx,\rrdy)-- (\rrdx+.5,\rrdy);
\draw [line width=1.2pt,dashed,color=qqqqff] (\rrdx,\rrdy)-- (\rrdx,\rrdy-.5);
\draw [line width=1.2pt,dashed,color=qqqqff] (\rrdx,\rrdy)-- (\rrdx-.5,\rrdy);

\draw [line width=1.2pt,dashed,color=qqqqff] (\rrrx,\rrry)-- (\rrrx+.5,\rrry);
\draw [line width=1.2pt,dashed,color=qqqqff] (\rrrx,\rrry)-- (\rrrx,\rrry-.5);
\draw [line width=1.2pt,dashed,color=qqqqff] (\rrrx,\rrry)-- (\rrrx,\rrry+.5);

\draw [line width=1.2pt,dashed,color=qqqqff] (\lux,\luy)-- (\lux+.5,\luy);
\draw [line width=1.2pt,dashed,color=qqqqff] (\lux,\luy)-- (\lux,\luy+.5);
\draw [line width=1.2pt,dashed,color=qqqqff] (\lux,\luy)-- (\lux-.5,\luy);
\draw [line width=1.2pt,dashed,color=qqqqff] (\ldx,\ldy)-- (\ldx+.5,\ldy);
\draw [line width=1.2pt,dashed,color=qqqqff] (\ldx,\ldy)-- (\ldx,\ldy-.5);
\draw [line width=1.2pt,dashed,color=qqqqff] (\ldx,\ldy)-- (\ldx-.5,\ldy);
\draw [line width=1.2pt,dashed,color=qqqqff] (\llux,\lluy)-- (\llux+.5,\lluy);
\draw [line width=1.2pt,dashed,color=qqqqff] (\llux,\lluy)-- (\llux,\lluy+.5);
\draw [line width=1.2pt,dashed,color=qqqqff] (\llux,\lluy)-- (\llux-.5,\lluy);
\draw [line width=1.2pt,dashed,color=qqqqff] (\lldx,\lldy)-- (\lldx+.5,\lldy);
\draw [line width=1.2pt,dashed,color=qqqqff] (\lldx,\lldy)-- (\lldx,\lldy-.5);
\draw [line width=1.2pt,dashed,color=qqqqff] (\lldx,\lldy)-- (\lldx-.5,\lldy);

\draw [line width=1.2pt,dashed,color=qqqqff] (\lllx,\llly)-- (\lllx-.5,\llly);
\draw [line width=1.2pt,dashed,color=qqqqff] (\lllx,\llly)-- (\lllx,\llly-.5);
\draw [line width=1.2pt,dashed,color=qqqqff] (\lllx,\llly)-- (\lllx,\llly+.5);
\end{tikzpicture}
\caption{The two trees of LEPs with $\gcd(a,b)=3$ and $5$ respectively}\label{F:tree35}
\end{figure}

\begin{figure}[h]
\begin{tikzpicture}[scale=2][line cap=round,line join=round,>=triangle 45,x=1cm,y=1cm]
\def\fx{0};
\def\fy{0};
\def\rx{1};
\def\ry{0};
\def\rrx{2};
\def\rry{0};
\def\rrrx{3};
\def\rrry{0};
\def\rux{1};
\def\ruy{1};
\def\rdx{1};
\def\rdy{-1};
\def\rrux{2};
\def\rruy{2};
\def\rrdx{2};
\def\rrdy{-2};
\draw [fill=black] (\fx,\fy) circle (1.5pt);
\draw[color=black] (\fx,\fy-.2)  node {$(4,4)$};
\draw [fill=black] (\rx,\ry) circle (1.5pt);
\draw[color=black] (\rx-.26,\ry-.2) node {$(4,20)$};
\draw [fill=black] (\rrx,\rry) circle (1.5pt);
\draw[color=black] (\rrx-.3,\rry-.2) node {$(4,260)$};
\draw [fill=black] (\rrrx,\rrry) circle (1.5pt);
\draw[color=black] (\rrrx-.35,\rrry-.2) node {$(4,3604)$};
\draw [fill=black] (\rux,\ruy) circle (1.5pt);
\draw[color=black] (\rux+.4,\ruy-.2) node {$(20,1396)$};
\draw [fill=black] (\rdx,\rdy) circle (1.5pt);
\draw[color=black] (\rdx+.37,\rdy-.2) node {$(20,116)$};
\draw [fill=black] (\rrux,\rruy) circle (1.5pt);
\draw[color=black] (\rrux+.55,\rruy-.2) node {$(260,251156)$};
\draw [fill=black] (\rrdx,\rrdy) circle (1.5pt);
\draw[color=black] (\rrdx+.5,\rrdy-.2) node {$(260,18196)$};
\draw [line width=1.2pt,color=qqqqff] (\fx,\fy)-- (\rx,\ry);
\draw [line width=1.2pt,color=qqqqff] (\rx,\ry)-- (\rrx,\rry);
\draw [line width=1.2pt,color=qqqqff] (\rx,\ry)-- (\rux,\ruy);
\draw [line width=1.2pt,color=qqqqff] (\rx,\ry)-- (\rdx,\rdy);
\draw [line width=1.2pt,color=qqqqff] (\rrx,\rry)-- (\rrrx,\rrry);
\draw [line width=1.2pt,color=qqqqff] (\rrx,\rry)-- (\rrux,\rruy);
\draw [line width=1.2pt,color=qqqqff] (\rrx,\rry)-- (\rrdx,\rrdy);
\draw [line width=1.2pt,dashed,color=qqqqff] (\rux,\ruy)-- (\rux+.5,\ruy);
\draw [line width=1.2pt,dashed,color=qqqqff] (\rux,\ruy)-- (\rux,\ruy+.5);
\draw [line width=1.2pt,dashed,color=qqqqff] (\rux,\ruy)-- (\rux-.5,\ruy);
\draw [line width=1.2pt,dashed,color=qqqqff] (\rdx,\rdy)-- (\rdx+.5,\rdy);
\draw [line width=1.2pt,dashed,color=qqqqff] (\rdx,\rdy)-- (\rdx,\rdy-.5);
\draw [line width=1.2pt,dashed,color=qqqqff] (\rdx,\rdy)-- (\rdx-.5,\rdy);
\draw [line width=1.2pt,dashed,color=qqqqff] (\rrux,\rruy)-- (\rrux+.5,\rruy);
\draw [line width=1.2pt,dashed,color=qqqqff] (\rrux,\rruy)-- (\rrux,\rruy+.5);
\draw [line width=1.2pt,dashed,color=qqqqff] (\rrux,\rruy)-- (\rrux-.5,\rruy);
\draw [line width=1.2pt,dashed,color=qqqqff] (\rrdx,\rrdy)-- (\rrdx+.5,\rrdy);
\draw [line width=1.2pt,dashed,color=qqqqff] (\rrdx,\rrdy)-- (\rrdx,\rrdy-.5);
\draw [line width=1.2pt,dashed,color=qqqqff] (\rrdx,\rrdy)-- (\rrdx-.5,\rrdy);

\draw [line width=1.2pt,dashed,color=qqqqff] (\rrrx,\rrry)-- (\rrrx+.5,\rrry);
\draw [line width=1.2pt,dashed,color=qqqqff] (\rrrx,\rrry)-- (\rrrx,\rrry-.5);
\draw [line width=1.2pt,dashed,color=qqqqff] (\rrrx,\rrry)-- (\rrrx,\rrry+.5);

\end{tikzpicture}
\caption{The  tree of LEPs with $\gcd(a,b)=4$}\label{F:tree4}
\end{figure}
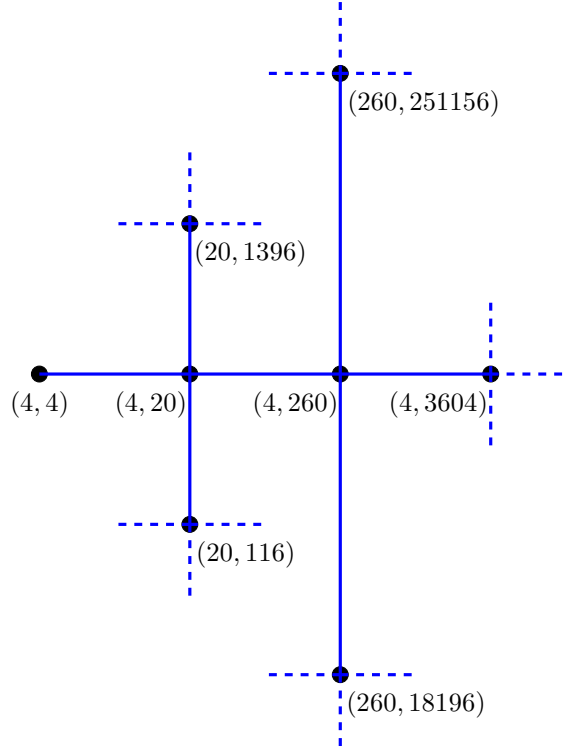


\section{Recurrence relations in the branches of the LEP trees}\label{S:rec}

The trees $\mathcal T_3,\mathcal T_4,\mathcal T_5$ contain a raft of interesting patterns that connect with phenomena that have been observed in other areas. We will give some simple examples.
First we require a technical lemma concerning recurrence relations. Consider the real function 
\[
f(x)= u x + v +  \sqrt{(u^2 - 1)  x^2 + 2 (u + 1) v x + w},
\]
 where $u,v,w$ are reals with $u>1$. Notice that for large $x$ the function $f(x)$ is asymptotic to the linear function $(u +  \sqrt{(u^2 - 1) )} x+v$, which is monotonically increasing as $u>1$.  If necessary, we restrict the domain of $f(x)$ to a maximal half-infinite interval  $[a,\infty)$ on which $f(x)$ is defined,  and has a well defined inverse.

\begin{lemma}\label{L:recu} Consider the function 
$f(x)= u x + v +  \sqrt{(u^2 - 1)  x^2 + 2 (u + 1) v x + w}$ 
 with the $u>1$. Choose $a_0$ in the domain of $f(x)$ as described above and define $a_i$ recursively by $a_i=f(a_{i-1})$. Then $a_i$ satisfies the 3rd order recurrence relation 
\[
a_i=(2u+1) a_{i-1} - (2u+1)a_{i-2} + a_{i-3}.
\]
\end{lemma}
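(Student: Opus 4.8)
The plan is to convert the defining relation $a_i=f(a_{i-1})$ into a \emph{symmetric} polynomial relation between consecutive terms, and then extract the linear recurrence from that symmetry. First I would isolate the radical, writing
\[
a_i-ua_{i-1}-v=\sqrt{(u^2-1)a_{i-1}^2+2(u+1)va_{i-1}+w},
\]
which is legitimate since the right-hand side is precisely the non-negative square root appearing in the definition of $f$. Squaring both sides and simplifying, the $a_{i-1}^2$ terms collapse to coefficient $1$ and the remaining terms combine, leaving
\[
a_i^2+a_{i-1}^2-2u\,a_ia_{i-1}-2v(a_i+a_{i-1})+(v^2-w)=0.
\]
The crucial feature of this identity is that it is \emph{symmetric} under interchanging $a_i$ and $a_{i-1}$; this symmetry is what drives the whole argument.

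Denote by $g(s,t)$ the left-hand side above, so that $g(a_i,a_{i-1})=0$ for every $i$. By symmetry we also have $g(a_{i-2},a_{i-1})=g(a_{i-1},a_{i-2})=0$. Thus, for fixed $a_{i-1}$, both $a_i$ and $a_{i-2}$ are roots of the monic quadratic $g(X,a_{i-1})=0$ in $X$, namely
\[
X^2-(2u\,a_{i-1}+2v)X+(a_{i-1}^2-2v\,a_{i-1}+v^2-w)=0.
\]
Subtracting $g(a_{i-2},a_{i-1})=0$ from $g(a_i,a_{i-1})=0$ and factoring gives
\[
(a_i-a_{i-2})\bigl(a_i+a_{i-2}-2u\,a_{i-1}-2v\bigr)=0,
\]
so, provided $a_i\ne a_{i-2}$, we obtain the second-order relation $a_i+a_{i-2}=2u\,a_{i-1}+2v$ (equivalently, this is Vieta's sum-of-roots formula for the displayed quadratic).

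To justify $a_i\ne a_{i-2}$ I would invoke the monotonicity built into the hypotheses: on its domain $f$ is increasing with asymptotic slope $u+\sqrt{u^2-1}>1$, so the recursively defined sequence $(a_i)$ is strictly monotonic and consecutive-but-one terms never coincide. Finally, to remove the additive constant $2v$, I would write the second-order relation at the indices $i$ and $i-1$,
\[
a_i+a_{i-2}=2u\,a_{i-1}+2v,\qquad a_{i-1}+a_{i-3}=2u\,a_{i-2}+2v,
\]
and subtract, obtaining $a_i-a_{i-1}+a_{i-2}-a_{i-3}=2u(a_{i-1}-a_{i-2})$, which rearranges at once to
\[
a_i=(2u+1)a_{i-1}-(2u+1)a_{i-2}+a_{i-3}.
\]
The only genuine subtlety is the symmetry observation in the first step: once that is in hand, everything reduces to Vieta's formulas followed by a single difference. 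The remaining minor point is ruling out the degenerate root coincidence $a_i=a_{i-2}$, and that is exactly where the assumption $u>1$ (via strict monotonicity of $f$) is used.
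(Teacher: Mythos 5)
Your proof is correct, and it takes a genuinely different route from the paper's. The paper first uses an affine change of variable to normalize $v=0$, computes $f^{-1}(x)=ux-\sqrt{(u^2-1)x^2+w}$ explicitly, and then verifies the identity $f(f(x))=(2u+1)f(x)-(2u+1)x+f^{-1}(x)$ by direct manipulation of the radicals. You instead eliminate the radical once and for all: consecutive terms satisfy the symmetric quadratic relation $g(s,t)=s^2+t^2-2ust-2v(s+t)+v^2-w=0$, so $a_i$ and $a_{i-2}$ are the two roots of the same monic quadratic $g(X,a_{i-1})=0$, Vieta yields the non-homogeneous second-order relation $a_i+a_{i-2}=2u\,a_{i-1}+2v$, and differencing at consecutive indices kills the constant. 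This is essentially the Vieta-jumping mechanism that drives the Markov--Rosenberger trees elsewhere in the paper, so it fits the context well; it buys a cleaner computation (no normalization, no radical identities) and a stronger intermediate statement (the second-order relation with explicit constant $2v$), whereas the paper's route makes the role of $f^{-1}$ visible --- which your argument recovers implicitly, since $a_{i-2}=f^{-1}(a_{i-1})$ is exactly the conjugate root of your quadratic. One hair-thin point: your monotonicity claim rules out $a_i=a_{i-2}$ only when $a_1\neq a_0$; if $a_0$ is a fixed point of $f$, the sequence is constant and $a_i=a_{i-2}$, but then the asserted recurrence holds trivially since $(2u+1)c-(2u+1)c+c=c$, so this degenerate case closes in one line.
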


\begin{remark}\label{R:affine}
It is not difficult to see that if a function $f(x)$ is of the form required by Lemma~\ref{L:recu}, then for any affine change of variable, $X=rx+s$, the function induced  by $f$ on $X$ also has the same form, with the same value of $u$, while $v$ changes to $rv-us+s$. In particular, as $u>1$, one can choose $s=rv/(u-1)$ to make $v$ become $0$.
\end{remark}

\begin{proof}[Proof of Lemma \ref{L:recu}] The required recurrence relation is clearly invariant under affine transformations. So, by the above remark, we may suppose that $v=0$. Thus
\[
f(x)= u x +  \sqrt{(u^2 - 1)  x^2 + w}.
\]
We start by showing that 
$f^{-1}(x)= u x  -  \sqrt{(u^2 - 1)  x^2 + w}$. Indeed, if $f(y)=x$, then
$u y +   \sqrt{(u^2 - 1)  y^2 + w} =x$.
 Rearranging and squaring gives
$  y^2 - 2 ux y  +x^2-w=0$,
and solving for $y$ gives
$y= ux  \pm\sqrt{(u^2-1)x^2+w}$.
As $u>1$, we have $f(x)>x$ for sufficiently large $x$, so $f^{-1}(x)<x$ and we take the negative root in the  formula for $y$. So $f^{-1}(x)$ is as claimed.

Let $E:=f(f(x))-((2u+1) f(x) - (2u+1)x + f^{-1}(x))$. We claim that $E=0$. 
Indeed, let 
$D=(u^2 - 1)  x^2 + w$
  and note that
\[
f(f(x))=  u^2x +   u\sqrt{D}  +  \sqrt{(u^2-1)( u x  +  \sqrt{D})^2 +   w}\]
and
\[
(2u+1) f(x) - (2u+1)x + f^{-1}(x)=(2 u^2 -1)x +   2 u \sqrt{D}.
\]
So  $E$ simplifies to
\[
-[(u^2-1) x +u \sqrt{D}] + \sqrt{(u^2-1)( u x  +  \sqrt{D})^2 +   w}.
\]
Note that as $u>1$, we have $(u^2-1) x +u \sqrt{D}>0$. Hence to show that $E=0$ we need to see that
\[
[(u^2-1) x +u \sqrt{D}]^2-[(u^2-1)( u x  +  \sqrt{D})^2 +   w]=0.
\]
Expanding and simplifying the left-hand-side we are left with
\[
-(u^2-1) x^2+D-w,
\]
which is 0, as claimed, by the definition of $D$.

As $E=0$ we have $f(f(x))=(2u+1) f(x) - (2u+1)x + f^{-1}(x)$, for all $x$ in the appropriate domain. Replacing $x$ by $f(x)$ gives
\[
f(f(f(x)))=(2u+1) f(f(x)) - (2u+1)f(x) + x.
\]
Applying this equation to $x=a_0$ gives the required recurrence relation. 
\end{proof}

For the function $\v_1 (a,b) = (a,\frac{a^2b+ a \sqrt{a^2b^2-4(a+b)^2}}2-2a-b)$ defined in the previous section, let us fix $a$ and look at the resulting function of $b$:
\begin{equation}\label{E:c}
f: b \mapsto \frac{a^2b+a\sqrt{a^2b^2-4(a+b)^2}}2-2a-b.
\end{equation}
One readily verifies that this function $f$ has the form required by Lemma \ref{L:recu}, with $u=\frac{ a^2-2}2,v=-2 a ,w=- a^4$.
Then  Lemma \ref{L:recu} gives the recurrence relation 
\begin{equation}\label{E:crel}
b_i=(a^2-1) b_{i-1} - (a^2-1)b_{i-2} + b_{i-3}.
\end{equation}

\begin{remark}
While the function $f$ of \eqref{E:c} is useful in its own right, we now briefly outline a  proof of the recurrence relation 
\eqref{E:crel} that does not require Lemma \ref{L:recu}. As we saw in the previous section, the  map $\v_1$ leaves $a$ and $n$  unchanged and $m$ is changed to $m'= an-m$. But this gives $m'>n$ so we have to then interchange $m'$ and $n$. Thus we can write $n'=an-m$ and $m'=n$. It follows that under two applications, we have $m'' =n'=an-m=am'-m$. Consequently, the value $m$ satisfies the second order recurrence relation $m_{i+1}=am_i-m_{i-1}$. Similarly, $n_{i+1}=an_i-n_{i-1}$. According to  a well known theorem of E. S. Selmer (see \cite{BS})  if one has a  second order recurrence relation $y_{i+1} = Ay_{i}+By_{i-1}$  and $x^2-Ax-B = (x-\alpha)(x-\beta)$ with $\alpha\not=\beta$, then $y_i^2$ satisfies the third order recurrence relation $y^2_{i+1} = Cy^2_{i}+Dy^2_{i-1}+Ey^2_{i-2} $,  where 
$x^3 -Cx^2 - Dx- E = (x- \alpha^2)(x- \beta^2)(x-\alpha\beta)$. In particular, if $y_{i+1} = a y_{i}-y_{i-1}$ with $a\ge 3$, then $y^2_{i+1} = (a^2-1)y^2_{i}-(a^2-1)y^2_{i-1}+y^2_{i-2}$.
So  the sequences for $m^2$ and $n^2$ both satisfy this third order recurrence relation, and hence so too does $b=k(m^2+n^2)/a$.
\end{remark}

We now examine the horizontal branches of the trees $\mathcal T_3,\mathcal T_4,\mathcal T_5$ that start at the fundamental solutions and head towards the right. These solutions have constant $a$ and are defined by repeated applications of the map $\v_1$, or equivalently, repeated applications of the function $f$ defined in \eqref{E:c}.

\begin{example}\label{Eg:3}
For $a=3$, with the above notation, \eqref{E:c} gives
\[
f: b \mapsto \frac{7}2b-6+\sqrt{\frac{45}4 b^2-54b-81}.
\]
Setting $b_0= 6$, corresponding to the fundamental LEP $a=3,b=6$, the first few terms of the sequence are
$6, 15, 87, 582, 3975$. The first 4 of these are visible in Figure \ref{F:tree35}; they lie on the central horizontal branch to the right of the fundamental solution $(3,6)$.
We employ Lemma \ref{L:recu} with $u=\frac{7}2, v=-6, w=-81$.
Thus
\[
b_i=8 b_{i-1} - 8b_{i-2} + b_{i-3},\quad\text{for}\ i\ge 3.
\]
By Remark \ref{R:div}, the area of the LEP, $2(a+b)$, is divisible by 9, and it is obviously even.
Let $A=2(a+b)/18=(3+b)/9$.
This is an affine transformation of $b$, so by Remark~\ref{R:affine}, the function induced by $f$ on the variable $A$ also has the form required by Lemma \ref{L:recu} with the same value of $u$. So by
Lemma \ref{L:recu},
\[
A_i=8 A_{i-1} - 8A_{i-2} + A_{i-3},\quad\text{for}\ i\ge 3.
\]
Setting $A_0= 2$, corresponding to the fundamental LEP $a=3,b=6$, the first few terms of the sequence are
$1,2,10,65,442$. This sequence is related to the sequence A064170, in  OEIS \cite{OEIS}, which is conjectured to satisfy the same recurrence relation.
\end{example}

\begin{example}\label{Eg:4}
For  $a=4$, \eqref{E:c} gives
\[
f: b \mapsto 7b-8+4\sqrt{3b^2-8b-16}.
\]
Setting $b_0= 4$, corresponding to the fundamental LEP $a=4,b=4$, the first few terms of the sequence are
$4, 20,260,3604,50180$. The first 4 of these are visible in Figure \ref{F:tree4}; they lie on the central horizontal branch to the right of the fundamental solution $(4,4)$.

We use Lemma \ref{L:recu} with $u=7, v=-8, w=-256$.
Thus
\[
b_i=15 b_{i-1} - 15b_{i-2} + b_{i-3},\quad\text{for}\ i\ge 3.
\]
Dividing the $b$-values by 4 we have the sequence $1,5,65,901,12545,\dots$. This is the sequence A103974 in  OEIS \cite{OEIS}; it is the sequence of smaller sides $x$ in $(x,x,x+1)$-integer triangles with integer area.

Further, by Remark \ref{R:div}, the area, $2(a+b)$ is divisible by 16.
Let $A=(a+b)/8=(4+b)/8$. As in the previous example, this is an affine transformation of $b$, so by Remark~\ref{R:affine}, the function induced by $f$ on the variable $A$ also has the form required by Lemma \ref{L:recu} with the same value of $u$.
Thus
\[
A_i=15 A_{i-1} - 15A_{i-2} + A_{i-3},\quad\text{for}\ i\ge 3.
\]
Setting $A_0= 1$, corresponding to the fundamental LEP $a=4,b=4$, the first few terms of the sequence are
$1,3,33,451,6273$. This is the sequence A011922 in  OEIS \cite{OEIS}.
\end{example}

\begin{example}\label{Eg:5}
For $a=5$, \eqref{E:c} gives
\[
f: b \mapsto 
\frac{23}2b-10 +\sqrt{\frac{21\cdot 25}4b^2-250b-625}.
\]
Setting $b_0= 5$, corresponding to the fundamental LEP $a=5,b=5$, the first few terms of the sequence are
$5, 85,1930,44285,1016605$. The first 4 of these are visible in Figure \ref{F:tree35}; they lie on the central horizontal branch to the right of the fundamental solution $(5,5)$.

We use Lemma \ref{L:recu} with $u=\frac{23}2, v=-10, w=-625$.
Thus
\[
b_i=24 b_{i-1} - 24b_{i-2} + b_{i-3},\quad\text{for}\ i\ge 3.
\]
The area $2(a+b)$ is divisible by 5.
Let $A=2(a+b)/5$.
We employ Lemma \ref{L:recu}  with the same value of $u$. Thus
\[
A_i=24 A_{i-1} - 24A_{i-2} + A_{i-3},\quad\text{for}\ i\ge 3.
\]
Setting $A_0= 4$, corresponding to the fundamental LEP $a=5,b=5$, the first few terms of the sequence are
$4,36,774,17716,406644$. 
\end{example}


\section{Diagonals, heights and altitudes}\label{S:dah} 

Let us first fix some terminology and notation; see Figure \ref{F:terms}.

\begin{definition}
Consider a non-square LEP $P$. We denote the length of its long (resp.~short) diagonal $d_l$ (resp.~$d_s$). The \emph{heights} of $P$ are the distances between opposite sides; we denote the long (resp.~short) height $h_l$ (resp.~$h_s$). 
Each diagonal $d$ partitions $P$ into two congruent triangles $T$. We will call the distance from $d$ to the third vertex of $T$ an  \emph{altitude} of $P$. We call the altitude from $d_s$ (resp.~$d_l$) the \emph{long} (resp.~\emph{short}) altitude and denote it $\eta_l$ (resp.~$\eta_s$). 
\end{definition}

\begin{remark} Our notion of altitude is not universal. Some authors use the term altitude for the concept we have called height.
\end{remark}

\begin{figure}[h]
\centering
\begin{tikzpicture}[scale=.7][line cap=round,line join=round,>=triangle 45,x=1cm,y=1cm]
\draw[line width=0.8pt,color=qqwuqq,fill=qqwuqq,fill opacity=0.1] (0,4) -- (.27,4) -- (.27,3.73) -- (0,3.73) -- cycle; 
\draw[line width=0.8pt,color=qqwuqq,fill=qqwuqq,fill opacity=0.1] (156/25-1,208/25-4/3) -- (156/25-.81/5-1,208/25-1.08/5-4/3) -- (156/25-.81/5+.216-1,208/25-1.08/5-.162-4/3) -- (156/25+.216-1,208/25-.162-4/3) -- cycle; 
\draw[line width=0.8pt,color=qqwuqq,fill=qqwuqq,fill opacity=0.1] (39/10,13/10) -- (39/10+.256,13/10+.085) -- (39/10+.256-.085,13/10+.085+.256) -- (39/10-.085,13/10+.256) -- cycle; 
\draw[line width=0.8pt,color=qqwuqq,fill=qqwuqq,fill opacity=0.1] (9.23077,-.153846) -- (9.23077+0.224654,-.153846-0.149769) -- (9.23077+0.224654+0.162,-.153846-0.149769+0.216) -- (9.23077+0.162,-.153846+0.216) -- cycle; 
\draw [line width=1.2pt,color=qqqqff] (3,4)-- (0,0);
\draw [line width=1.2pt,color=qqqqff] (3,4)-- (12,4);
\draw [line width=1.2pt,color=qqqqff] (12,4)-- (9,0);
\draw [line width=1.2pt,color=qqqqff] (9,0)-- (0,0);
\draw [line width=0.8pt,dash pattern=on 1pt off 1pt] (0,0)-- (0,4);
\draw [line width=0.8pt,dash pattern=on 1pt off 1pt] (0,4)-- (3,4);
\draw [line width=0.8pt,dash pattern=on 1pt off 1pt] (3,4)-- (156/25-1,208/25-4/3);
\draw [line width=0.8pt,dash pattern=on 1pt off 1pt] (156/25-1,208/25-4/3)-- (11,4-4/3);
\draw [line width=0.8pt,dash pattern=on 1pt off 1pt] (9.23077,-.153846)-- (12,4);
\draw [line width=0.8pt,dash pattern=on 1pt off 1pt] (9.69231,-.461538)-- (9,0);
\draw [line width=0.8pt,dashed] (0,0)-- (12,4);
\draw [line width=0.8pt,dashed] (3,4)-- (9,0);
\draw [line width=0.8pt,dash pattern=on 1pt off 1pt] (39/10,13/10)-- (3,4);
\draw (1.15,2) node {$a$};
\draw (7.5,4.25) node {$b$};
\draw (8,5.36) node {$h_l$};
\draw (-.4,2) node {$h_s$};
\draw (8,3.1) node {$d_l$};
\draw (8,1.1) node {$d_s$};
\draw (10.5,1.15) node {$\eta_l$};
\draw (3.2,2.5) node {$\eta_s$};
\end{tikzpicture}
\caption{Diagonals, heights and altitudes}\label{F:terms}
\end{figure}
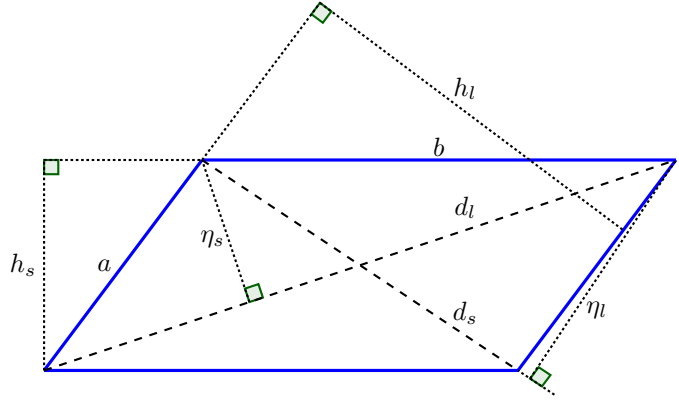

\begin{lemma}\label{L:widths}
Suppose a LEP $P$ has  sides $a,b$ with $a\le b$. Then 
\begin{enumerate}
\item[\rm(a\rm)] $\displaystyle h_l=\frac{2(a+b)}a,\qquad h_s=\frac{2(a+b)}b$.
\item[\rm(b\rm)] $\displaystyle \eta_l=\frac{2(a+b)}{d_s},\qquad \eta_s=\frac{2(a+b)}{d_l}$.
\end{enumerate}
\end{lemma}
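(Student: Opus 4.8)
The plan is to use a single input—the equability hypothesis, which says the area of $P$ equals its perimeter $2(a+b)$—and then to express that area in two complementary ways: via a side as base (for the heights of part (a)) and via a diagonal as base (for the altitudes of part (b)).

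For part (a), I would recall the elementary fact that the area of a parallelogram equals the product of any side with the perpendicular distance to its opposite parallel side. Taking a side of length $a$ as base, the area is $a$ times the distance between the two sides of length $a$; taking a side of length $b$ as base, it equals $b$ times the distance between the two sides of length $b$. Setting each expression equal to $2(a+b)$ gives these two distances as $\frac{2(a+b)}{a}$ and $\frac{2(a+b)}{b}$ respectively. Since $a\le b$, the first is at least the second, so it is the long height $h_l$ and the second is the short height $h_s$, which are exactly the claimed formulas.

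For part (b), I would use that each diagonal of length $d$ splits $P$ into two congruent triangles, each of area half that of $P$, namely $a+b$. Regarding $d$ as the base of such a triangle, its area is $\tfrac12 d\,\eta$, where $\eta$ is the corresponding altitude, i.e.\ the distance from $d$ to the third vertex; hence $\eta=\frac{2(a+b)}{d}$. Taking $d=d_s$ yields the altitude $\frac{2(a+b)}{d_s}$, which by the naming convention of the definition is the long altitude $\eta_l$, and taking $d=d_l$ yields $\frac{2(a+b)}{d_l}=\eta_s$.

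The computation is wholly elementary, so the only step demanding any care is matching each quantity to the long/short labels fixed in the definition. This reduces to the inverse-proportionality observation that a smaller base (the shorter side $a$, or the shorter diagonal $d_s$) produces a larger height or altitude; combined with $a\le b$ and $d_s\le d_l$, this confirms that the subscripts $l$ and $s$ are assigned consistently, so no genuine obstacle remains.
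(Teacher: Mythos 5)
Your proposal is correct and follows essentially the same argument as the paper: equate the area $2(a+b)$ first with $a h_l$ and $b h_s$ for part (a), then with $\eta_l d_s$ and $\eta_s d_l$ (via the half-area triangles cut off by each diagonal) for part (b). Your extra care in matching the long/short subscripts via inverse proportionality is a sound elaboration of what the paper's definition builds in implicitly.
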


\begin{proof}
By equability, the area of $P$ is $2(a+b)$, but the area is obviously also $ah_l$ and $bh_s$. This gives (a).
But the area of $P$ is also twice the area of the triangle determined by each diagonal. So the area of $P$ is both $\eta_l d_s$ and $\eta_s d_l$.  This gives~(b).
\end{proof}

\begin{remark}\label{R:widths}
If  $h_s$  is an integer, then $h_s=2+\frac{2a}b$ by Lemma~\ref{L:widths}(a), and so either $b=a$ or $b=2a$. These cases were treated in Corollary \ref{C:rhom}. 
Note also that the above lemma also gives $(h_s-2)(h_l-2)=4$.
So $h_s,h_l$ are both integers only in the cases $h_s=h_l=4$ and $h_s=3, h_l=6$. The first is the $4\times 4$ square. The second case is the $3\times 6$ rectangle.
 \end{remark}

Equable parallelograms tend to be very thin in form. The following result gives a sharp statement of this thinness.

\begin{theorem}\label{T:altbd} For every LEP the altitudes satisfy  $2<\eta_l\le 2\sqrt5$ and $2<\eta_s\le 2\sqrt2$.
\end{theorem}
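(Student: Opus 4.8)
The plan is to route everything through the diagonals, so that each of the four inequalities becomes a purely algebraic statement in $a$ and $b$. By Lemma~\ref{L:widths}(b) the altitudes are $\eta_l = 2(a+b)/d_s$ and $\eta_s = 2(a+b)/d_l$, and by Lemma~\ref{L:diag} we have $d_s^2 = a^2+b^2-2D$ and $d_l^2 = a^2+b^2+2D$, where $D := \sqrt{a^2b^2-4(a+b)^2}\ge 0$. Hence
\[
\eta_l^2 = \frac{4(a+b)^2}{a^2+b^2-2D},\qquad \eta_s^2 = \frac{4(a+b)^2}{a^2+b^2+2D},
\]
and each bound to be proved is an inequality between these rational expressions.

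The two lower bounds and the upper bound on $\eta_s$ follow from equability alone, with no arithmetic input. For $\eta_l>2$ I would observe $\eta_l>2 \iff (a+b)^2 > d_s^2 \iff 2ab+2D>0$, which is immediate, and likewise $\eta_s>2 \iff (a+b)^2 > d_l^2 \iff ab > D$, which holds because $D<ab$. For $\eta_s \le 2\sqrt5$, wait — for the upper bound $\eta_s\le 2\sqrt2$, since $D\ge 0$ the denominator satisfies $a^2+b^2+2D\ge a^2+b^2$, so $\eta_s^2 \le 4(a+b)^2/(a^2+b^2)$, and then $(a+b)^2\le 2(a^2+b^2)$ gives $\eta_s^2\le 8$; equality forces $D=0$ and $a=b$, i.e.\ the $4\times 4$ square.

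The bound $\eta_l\le 2\sqrt5$ is the substantive one. Clearing denominators, $\eta_l^2\le 20$ is equivalent to $5D \le 2a^2-ab+2b^2$ (both sides positive), and squaring and simplifying collapses this cleanly to
\[
3ab-a^2-b^2 \le 25.
\]
I expect this reduction to be the real heart of the proof, since the resulting inequality \emph{fails} for general real equable parallelograms — an equable rhombus of side $6$ gives $3ab-a^2-b^2 = 36$ — so it cannot be a consequence of equability alone and must draw on the lattice classification. This is where I would invoke Theorem~\ref{T:345}, in the form established in Section~\ref{S:345}: every LEP satisfies $a+b = kmn$ and $ab = k(m^2+n^2)$ for positive integers $m,n$ and $k\in\{9,8,5\}$ according as $\gcd(a,b)=3,4,5$.

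With these relations the inequality becomes transparent. Substituting gives
\[
3ab-a^2-b^2 = 5ab-(a+b)^2 = 5k(m^2+n^2)-k^2m^2n^2 = 25 - (km^2-5)(kn^2-5),
\]
so $3ab-a^2-b^2\le 25$ reduces to the single assertion $(km^2-5)(kn^2-5)\ge 0$. Since $k\ge 5$ and $m,n\ge 1$, each factor is nonnegative: for $k=9,8$ one has $km^2-5 \ge k-5 > 0$, while for $k=5$ it is $5(m^2-1)\ge 0$. Thus the product is nonnegative and the bound follows, with equality exactly when some factor vanishes, i.e.\ along the $k=5$ family with $m=1$, namely $(5,5),(5,10),(10,25),\dots$ in $\mathcal{T}_5$. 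The only genuine obstacle is therefore conceptual rather than computational: recognising that the $\eta_l$ bound is not metric but arithmetic, and that it is precisely the constraint $k\ge 5$ coming from Rosenberger's classification that makes both factors nonnegative.
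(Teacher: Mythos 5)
Your proof is correct, and it reaches the theorem by a route that genuinely differs from the paper's in how the work is distributed. The paper proves all four inequalities inside the arithmetic parametrization of Section~\ref{S:345}: writing $ab=k(m^2+n^2)$, $a+b=kmn$, it computes $d_l^2=m^2(k^2n^2-4k)$ and $d_s^2=n^2(k^2m^2-4k)$, inverts to obtain $km^2=\frac{4\eta_l^2}{\eta_l^2-4}$ and $kn^2=\frac{4\eta_s^2}{\eta_s^2-4}$, and then uses $km^2\ge 5$ for the $2\sqrt5$ bound and the fact that $kn^2\ge 8$ over all fundamental solutions for the $2\sqrt2$ bound. You instead keep everything symmetric in $a,b$ and thereby isolate exactly which bounds are arithmetic: your derivation of $\eta_s\le 2\sqrt2$ from $d_l^2\ge a^2+b^2$ and $(a+b)^2\le 2(a^2+b^2)$, like your two lower bounds, is purely metric and so holds for \emph{every} equable parallelogram, lattice or not --- strictly more general than the paper's argument, which spends lattice information ($kn^2\ge 8$) on a bound that needs none. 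Your reduction of the one genuinely arithmetic bound is also sound: both sides of $5D\le 2a^2-ab+2b^2$ are nonnegative, and the identity
\[
\bigl(2a^2-ab+2b^2\bigr)^2-25D^2=4(a+b)^2\bigl(a^2+b^2-3ab+25\bigr)
\]
confirms your collapse to $3ab-a^2-b^2\le 25$; the further identity $5ab-(a+b)^2=25-(km^2-5)(kn^2-5)$ checks out, and $k\ge 5$ makes both factors nonnegative. So the ultimate input is the same as the paper's (the constraint $k\ge5$ from Rosenberger's classification), but packaged differently: the paper's version uses only the single condition $km^2\ge5$, while yours uses nonnegativity of the product, and your equality analysis ($k=5$, $m=1$, giving $(5,5),(5,10),(10,25),\dots$) is consistent with the paper's remark that the side-$5$ rhombus attains $\eta_l=2\sqrt5$ and the $4\times4$ square attains $\eta_s=2\sqrt2$. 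Your observation that the side-$6$ equable rhombus violates $3ab-a^2-b^2\le25$ is a nice touch the paper does not make explicit: it shows the $\eta_l$ bound cannot follow from equability alone.
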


\begin{proof}
Suppose a LEP $P$ has  sides $a,b$ with $a\le b$. 
With the notation used in Section \ref{S:345}, we have
$ab= k(m^2+n^2), a+b=kmn, \sqrt{a^2 b^2 -4(a+b)^2}= k(n^2-m^2)$,
for some positive integer $k$ and relatively prime integers $m,n$ with 
$m\le n$. 
From Lemma \ref{L:diag}, the diagonals are given by
\begin{align*}
d_l^2&=(a^2  +  b^2) +\sqrt{4a^2 b^2 -16(a+b)^2}\\
&=k^2m^2n^2-2k(m^2+n^2)+2k(n^2-m^2)=m^2(k^2n^2-4k),\\
d_s^2&=(a^2  +  b^2)- \sqrt{4a^2 b^2 -16(a+b)^2}\\
&=k^2m^2n^2-2k(m^2+n^2)-2k(n^2-m^2)=n^2(k^2m^2-4k).
\end{align*}
The long altitude is
\[
\eta_l=\frac{2(a+b)}{d_s}=\frac{2knm}{n\sqrt{k^2m^2-4k}}.
\]
Squaring and rearranging gives $ km^2 = \frac{4\eta_l^2}{\eta_l^2-4}$. As $m\ge 1$ and $k\ge 5$ by the proof of Theorem~\ref{T:345},
we have $\frac{4\eta_l^2}{\eta_l^2-4}\ge 5$ and so $20\ge \eta_l^2$, that is $\eta_l\le 2\sqrt5$.
Furthermore, 
\[
\eta_l=\frac{2knm}{n\sqrt{k^2m^2-4k}}>\frac{2knm}{n\sqrt{k^2m^2}}=2.
\]

 Arguing in the same manner one finds $\eta_s>2$ and $ kn^2 = \frac{4\eta_s^2}{\eta_s^2-4}$. 
As we saw in Section \ref{S:forest}, for $k=9$ (resp.~8, resp.~5), the fundamental solution has $(m,n)=(1,1)$ (resp.~(1,1), resp.~(1,2)). 
So the minimum value of $kn^2$ is 8. Rearranging $ 8 \le  \frac{4\eta_s^2}{\eta_s^2-4}$ gives $\eta_s^2 \le 8$, that is $\eta_s\le 2\sqrt2$.\end{proof}

\begin{remark}
The bound $\eta_l= 2\sqrt5$ is attained by the rhombus of Figure \ref{F:rhom}, and 
the bound $\eta_s= 2\sqrt2$ is attained by the $4\times 4$ square.
\end{remark}

We complete this section with some results that use either the above theorem, or ideas from its proof.

\begin{proposition}\label{P:irrat}
For every LEP, the two diagonals and the two altitudes are irrational.
\end{proposition}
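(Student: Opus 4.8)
The plan is to prove irrationality of the four quantities $d_l, d_s, \eta_l, \eta_s$ by exhibiting each as a square root of an integer that is provably not a perfect square. The key observation, already packaged for us by Lemma~\ref{L:diag} and by the computations inside the proof of Theorem~\ref{T:altbd}, is that all four quantities, when squared, land in $\Z$ in a controlled way. By Remark~\ref{L:int} it then suffices to show that none of the relevant integers is a perfect square (for the diagonals) and, for the altitudes, that the squared values are not rational. So the whole proposition reduces to four ``no square'' number-theoretic statements.

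First I would handle the diagonals. Using the factored forms from the proof of Theorem~\ref{T:altbd}, namely $d_l^2=m^2(k^2n^2-4k)$ and $d_s^2=n^2(k^2m^2-4k)$, with $\gcd(m,n)=1$ and $k\in\{5,8,9\}$, I observe that $d_l$ is rational iff $d_l^2$ is a perfect square iff $k^2n^2-4k=k(kn^2-4)$ is a perfect square, and similarly $d_s$ is rational iff $k(km^2-4)$ is a perfect square. I would dispose of these case by case in $k$. For $k=9$: $9(9n^2-4)$ is a square iff $9n^2-4$ is a square, but $9n^2-4=(3n)^2-4$ lies strictly between $(3n-1)^2$ and $(3n)^2$ for $n\ge 1$, so it is never a square. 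For $k=8$: $8(8n^2-4)=16(4n^2-2)$ is a square iff $4n^2-2$ is, but $4n^2-2\equiv 2\pmod 4$ can never be a square. For $k=5$: $5(5n^2-4)$ is a square iff $5\mid(5n^2-4)$, i.e. $5\mid 4$, which fails; so it is never a square. The identical arithmetic with $m$ in place of $n$ settles $d_s$. Hence both diagonals are irrational.

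For the altitudes I would feed back the relations derived in the proof of Theorem~\ref{T:altbd}, namely $km^2=\frac{4\eta_l^2}{\eta_l^2-4}$ and $kn^2=\frac{4\eta_s^2}{\eta_s^2-4}$. Solving each for the square of the altitude gives $\eta_l^2=\frac{4km^2}{km^2-4}$ and $\eta_s^2=\frac{4kn^2}{kn^2-4}$, so that $\eta_l^2=d_l^2\cdot(\text{rational})$ in a way that ties $\eta_l^2$ directly to $d_l^2$; concretely $\eta_l=\frac{2(a+b)}{d_s}$ and $\eta_s=\frac{2(a+b)}{d_l}$ by Lemma~\ref{L:widths}(b), with $2(a+b)=2kmn$ an integer. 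Since the numerator $2(a+b)$ is a nonzero integer and $d_s,d_l$ are irrational (indeed irrational square roots of integers), the quotient $\eta_l=2(a+b)/d_s$ is irrational: a nonzero rational divided by an irrational is irrational. The same argument applies to $\eta_s=2(a+b)/d_l$. Thus both altitudes are irrational as well.

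The main obstacle is purely bookkeeping rather than conceptual: I must make sure the ``never a perfect square'' arguments cover every admissible $(m,n,k)$ uniformly, and in particular that the $k=5$ divisibility argument is stated for both $d_l$ (using $n$) and $d_s$ (using $m$) without assuming any relation between $m$ and $n$ beyond coprimality. A subtle point worth flagging is that I am implicitly using that $d_s\neq 0$ and $d_l\neq 0$, which holds because $P$ is a genuine (non-degenerate) parallelogram, and that $2(a+b)\neq 0$. Once the four elementary congruence/betweenness arguments are in place, the deduction that the altitudes are irrational is immediate from the already-established irrationality of the diagonals via Lemma~\ref{L:widths}(b), so no further number theory is needed for the altitude cases.
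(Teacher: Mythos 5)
Your proof is correct and takes essentially the same route as the paper's: you use the factorisations $d_l^2=m^2(k^2n^2-4k)$ and $d_s^2=n^2(k^2m^2-4k)$ with $k\in\{5,8,9\}$ from the proof of Theorem~\ref{T:altbd}, show case by case that these integers are never perfect squares, conclude via Remark~\ref{L:int}, and then obtain the altitudes from Lemma~\ref{L:widths}(b) as nonzero integers divided by irrationals. The only deviations are cosmetic: your betweenness argument for $k=9$ and the mod-$4$ argument for $k=8$ replace the paper's divisibility counts, which are equivalent in substance.
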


\begin{proof}
As we saw in the above proof, $d_l^2=m^2(k^2n^2-4k)$. So the longer diagonal has integer length only when  $k^2n^2-4k$ is a square. But $k=5, 8$ or $9$, 
as we saw in the proof of Theorem~\ref{T:345}. For $k=5$ (resp.~8), the expression $k^2n^2-4k$ is divisible by $5$ (resp.~$4k=2^5$) but not by $5^2$ (resp.~$2^6$) and is hence never a square. For $k=9$, one has $k^2n^2-4k=3^2(9n^2-4)$, and $9n^2-4$ is not a square for any integer $n$. Thus the long diagonal is never an integer, and hence  by Remark~\ref{L:int}, the long diagonal is irrational for every LEP. Similarly, the short diagonal is always irrational as  $k^2m^2-4k$ is never a square.
Consequently, by Lemma \ref{L:widths}(b), the altitudes  $\eta_l,\eta_s$ are also  irrational for every LEP.
\end{proof}

The following immediate corollaries of Proposition \ref{P:irrat} are each generalisations of 
Proposition \ref{P:partit}. Recall that a triangle is \emph{Heronian} if it has integer side lengths and integer area.

\begin{corollary} No LEP can be partitioned into the union of two Heronian triangles. 
\end{corollary}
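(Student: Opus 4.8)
The plan is to reduce the corollary to Proposition~\ref{P:irrat}, exactly as suggested by the phrase \emph{immediate corollary}. The key observation is that any partition of a LEP into two triangles must be a cut along one of its two diagonals; once this is granted, the shared side is a diagonal, and diagonals are irrational.

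First I would record that a LEP, being a parallelogram, is a convex quadrilateral, and argue that the only way to dissect a convex quadrilateral $Q$ into two triangles with disjoint interiors whose union is $Q$ is along one of its two diagonals. Indeed, let $e$ be the segment shared by the two triangles. An endpoint of $e$ lying in the interior of $Q$ is impossible, since the full angle $2\pi$ around such a point would have to be covered by at most two triangle corners, whose interior angles each lie strictly below $\pi$ and hence sum to less than $2\pi$. An endpoint lying in the relative interior of a side of $Q$ is also impossible: it would split that side and place the two adjacent corners of $Q$ in different triangles, forcing the complementary region to be a quadrilateral rather than a triangle. Hence both endpoints of $e$ are vertices of $Q$, so $e$ is a diagonal.

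With this in hand the rest is a one-line appeal to Proposition~\ref{P:irrat}. Cutting a LEP along a diagonal of length $d$ yields two triangles, each having $d$ as one of its three sides. By Proposition~\ref{P:irrat} both diagonals of a LEP are irrational, so $d\notin\Z$; since a Heronian triangle has integer side lengths by definition, neither triangle can be Heronian. Therefore no LEP admits such a partition.

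The only genuine obstacle is the elementary dissection claim of the second paragraph; everything after it is immediate. Since the paper already establishes the special case of two congruent right triangles in Proposition~\ref{P:partit} by a self-contained computation, one could alternatively demote the dissection step to a passing remark, but the angle and complementary-region argument above makes it fully rigorous without introducing any new machinery.
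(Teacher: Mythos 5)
Your proof is correct and takes essentially the same route as the paper, which states the result as an immediate consequence of Proposition \ref{P:irrat}: any two-triangle partition of a convex quadrilateral is a cut along a diagonal, the diagonals of a LEP are irrational, and Heronian triangles have integer sides. Your dissection lemma merely makes explicit what the paper leaves implicit (for full rigour, note that an endpoint of $e$ interior to $Q$ could a priori lie in the relative interior of an \emph{edge} of each triangle, contributing angle $\pi$ rather than a corner angle strictly below $\pi$; but then both triangles would have edges through that point along a common line, and the shared segment $e$ would extend past the point, so this case is excluded as well).
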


\begin{corollary} There exists no LEP having a pair of opposite vertices with the same $x$-coordinate, or the same $y$-coordinate. 
\end{corollary}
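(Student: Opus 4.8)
The plan is to observe that a pair of opposite vertices of a parallelogram is precisely the pair of endpoints of one of its two diagonals, and then to invoke the irrationality of diagonals already established in Proposition~\ref{P:irrat}. Concretely, I would label the LEP so that the two opposite vertices under consideration are lattice points $P=(x_1,y_1)$ and $Q=(x_2,y_2)$; by definition of a parallelogram, the segment $PQ$ is one of its two diagonals.

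Next I would run the hypothesis through this observation. Suppose first that the two opposite vertices share an $x$-coordinate, say $x_1=x_2$. Then $PQ$ is a vertical segment, so its length is $d=|y_1-y_2|$, which is a non-negative integer because $y_1,y_2\in\Z$; in particular $d$ is rational (consistent with Remark~\ref{L:int}). The case $y_1=y_2$ is handled identically, yielding a horizontal diagonal of integer length $|x_1-x_2|$. The contradiction is then immediate: Proposition~\ref{P:irrat} asserts that every LEP has both of its diagonals irrational, so no diagonal can have rational, let alone integer, length. This contradicts the integrality of $d$ just derived, so no such LEP exists.

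There is no genuine obstacle in this argument, since it is a direct consequence of Proposition~\ref{P:irrat}; the only point that warrants a clear statement is the elementary fact that opposite vertices of a parallelogram are exactly the endpoints of a diagonal, as this is what licenses the application of the proposition. The essential mechanism is that a shared coordinate forces the connecting diagonal to be axis-aligned, and an axis-aligned segment between two integer lattice points necessarily has integer length, in flat contradiction with the irrationality of LEP diagonals.
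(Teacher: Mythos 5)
Your proof is correct and follows exactly the argument the paper intends: the corollary is stated as an immediate consequence of Proposition~\ref{P:irrat}, and your route (a shared coordinate makes the diagonal axis-aligned, hence of integer length between lattice points, contradicting its irrationality) is precisely that implicit argument, spelled out cleanly.
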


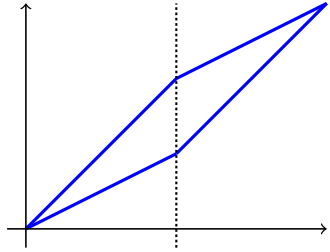
\begin{figure}[h]
\centering
\begin{tikzpicture}[scale=.5][line cap=round,line join=round,>=triangle 45,x=1cm,y=1cm]
\draw [line width=1.2pt,color=qqqqff] (4,2)-- (0,0);
\draw [line width=1.2pt,color=qqqqff] (4,2)-- (8,6);
\draw [line width=1.2pt,color=qqqqff] (0,0)-- (4,4);
\draw [line width=1.2pt,color=qqqqff] (4,4)-- (8,6);
\draw [line width=0.8pt,dash pattern=on 1pt off 1pt] (4,-.5)-- (4,6);
\draw [->,line width=0.6pt] (-.5,0) -- (8,0);
\draw [->,line width=0.6pt] (0,-.5,0) -- (0,6);
\end{tikzpicture}
\caption{A parallelogram that is not a LEP}\label{F:notlep}
\end{figure}

\begin{proposition}\label{P:ovaxis} If a LEP $P$ contains the origin $O$ and vertices $O, A, B, C$ in cyclic order with long diagonal $OB=d_l$ belonging to the 1st quadrant, then all four vertices belong to the first quadrant.
\end{proposition}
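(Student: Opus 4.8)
The plan is to work in coordinates and reduce everything to ruling out a single ``dip'' below an axis. Place $O=(0,0)$ and write the long diagonal as $\vec{OB}=(p,r)$. By hypothesis $B$ lies in the first quadrant, and since $d_l$ is irrational by Proposition~\ref{P:irrat}, $B$ cannot lie on either axis, so $p,r\ge 1$. Writing $A=(x,y)$ for one of the two remaining vertices, the parallelogram law gives $C=(p-x,r-y)$, and the claim ``all four vertices lie in the first quadrant'' is exactly $0\le x\le p$ and $0\le y\le r$ (equivalently $\vec{OA}$ and $\vec{OC}$ both lie in the closed first quadrant). Here I assume $P$ is not a rectangle, since otherwise $d_l=d_s$ and ``the long diagonal'' is not distinguished; in that degenerate case the statement is immediate.

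First I would extract the structural consequence of $OB$ being the \emph{long} diagonal. From Lemma~\ref{L:diag} one has $d_l^2-d_s^2=4\,\vec{OA}\cdot\vec{OC}$, so $OB$ being the long diagonal is equivalent to $\vec{OA}\cdot\vec{OC}\ge 0$, i.e.\ the angle $\theta$ at $O$ is acute. Suppose, for contradiction, that some vertex leaves the first quadrant. By reflecting in the line $y=x$ (an element of $E(\Z^2)$ fixing $O$ and preserving $B\in$~Q1) and by swapping the roles of the two side vectors $\vec{OA}\leftrightarrow\vec{OC}$ (which fixes $O$ and $B$), every such violation reduces to the single case $y\le -1$. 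Writing $\vec{OA}\cdot\vec{OC}=x(p-x)+y(r-y)\ge 0$ with $y<0<r-y$, the term $y(r-y)$ is negative, forcing $x(p-x)>0$; as $x+(p-x)=p>0$ this gives integers $x\ge 1$ and $p-x\ge 1$. In particular $C=(p-x,r-y)$ has both coordinates positive, so $C$ lies in the open first quadrant, and the whole failure is concentrated in the single coordinate $y\le -1$.

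Now I would bring in the thinness estimate. The vertex $A$ is the apex of the triangle $OAB$ whose base is $OB$; since $\cos\theta>0$ makes $d_l$ the strictly longest side of that triangle, the foot of the perpendicular from $A$ falls strictly inside $OB$, and the perpendicular distance from $A$ to the line $OB$ equals the short altitude $\eta_s=2(a+b)/d_l$ (Lemma~\ref{L:widths}(b)), which satisfies $2<\eta_s\le 2\sqrt5\cdots$, more precisely $\eta_s\le 2\sqrt2$ by Theorem~\ref{T:altbd}. Decomposing $\vec{OA}$ in the orthonormal frame along $OB$, with $\beta$ the angle of $OB$ (so $\cos\beta=p/d_l$, $\sin\beta=r/d_l$) and $s$ the projection of $\vec{OA}$ onto $OB$, one gets $s^2+\eta_s^2=\ell_A^2$ (where $\ell_A=|OA|\in\{a,b\}$ is an integer $\ge 3$) together with the identity $y=s\sin\beta-\eta_s\cos\beta$. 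Thus $y\le -1$ is equivalent to $\eta_s\cos\beta\ge 1+s\sin\beta$, with $s=\sqrt{\ell_A^2-\eta_s^2}$.

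The remaining task, which I expect to be the genuine obstacle, is to show that this last inequality is \emph{incompatible} with the arithmetic of the configuration. The difficulty is real: the inequality $\eta_s\cos\beta\ge 1+s\sin\beta$ can be satisfied by arbitrary real vectors (a short edge can dip just below the axis while the long edge keeps the diagonal in Q1), so no purely metric estimate suffices. The contradiction must use the full rigidity that $A$ and $C$ are \emph{lattice} points, that $\ell_A$ and $\ell_C=|OC|$ are both \emph{integers} with $\ell_A,\ell_C\ge 3$, and that $\tan\beta=r/p$ with $p,r\in\N$. My plan here is to pair the relation above with its counterpart for $C$ (which gives $\ell_C\sin\psi_2=\eta_s$ and $\psi_1+\psi_2=\theta$, whence $\psi_1\le\theta\le\arcsin\tfrac45$, using that $\sin\theta=2(a+b)/(ab)\le\tfrac45$ for every non-rectangular LEP) and to feed the resulting narrow angular window, together with $2<\eta_s\le2\sqrt2$ and $x,x',y'\ge1$, into the integer constraints $x^2+y^2=\ell_A^2$ and $(p-x)^2+(r-y)^2=\ell_C^2$. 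The expected outcome is that no integer point $(x,y)$ with $y\le-1$ can simultaneously realise an admissible side length and keep $B=A+C$ in the first quadrant, contradicting $y\le -1$ and completing the proof.
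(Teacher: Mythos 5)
Your proposal follows the paper's argument almost exactly up to the point where it stops: the same symmetry reduction to a single vertex $A=(x,y)$ dipping below the $x$-axis, the same observation that the acute angle at $O$ (equivalently $\vec{OA}\cdot\vec{OC}\ge 0$, since $OB$ is the long diagonal) puts the foot of the perpendicular from $A$ strictly inside the segment $OB$, and the same appeal to Theorem~\ref{T:altbd} for $2<\eta_s\le 2\sqrt2$. But then you declare the conclusion ``the genuine obstacle'' and substitute an uncompleted plan (angular windows via $\sin\theta\le\frac45$, paired relations for $A$ and $C$), so the proof is not finished. The gap is real, and your diagnosis of it is also mistaken: you claim ``no purely metric estimate suffices,'' yet the contradiction falls out of the very identity you wrote down. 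From $y=s\sin\beta-\eta_s\cos\beta$ with $s\ge 0$ (foot inside $OB$), $\sin\beta>0$ and $\cos\beta\le 1$, you get immediately
\[
|y|=\eta_s\cos\beta-s\sin\beta\le\eta_s\le 2\sqrt2<3,
\]
so the integer $y$ satisfies $|y|\in\{1,2\}$. (This is the paper's step: since the foot lies in the first quadrant, the distance from $A$ to $d_l$ exceeds $|y|$, whence $|y|<\eta_s\le 2\sqrt2$.) Now combine this with facts you already established: $x\ge 1$ (from $x(p-x)>0$) and $\ell_A=|OA|$ a positive integer. Then $\ell_A^2-x^2=y^2\in\{1,4\}$, i.e.\ $(\ell_A-x)(\ell_A+x)=1$ or $4$; the first forces $x=0$, and the second forces $\ell_A-x=\ell_A+x=2$ (the factorisation $1\times 4$ being ruled out by parity), again $x=0$ --- contradicting $x\ge 1$. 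That single Pythagorean-gap observation is the paper's entire punchline, and the only arithmetic input it needs is that $x$, $y$ and $\ell_A$ are integers; none of the extra machinery in your final paragraph is required.

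One further small point: your real-vector intuition that ``a short edge can dip just below the axis while the long edge keeps the diagonal in Q1'' is exactly what the lattice condition forbids here, but not through any delicate angular analysis --- a lattice point within vertical distance $2\sqrt2$ of the positive $x$-axis at integer distance from the origin must lie on the axis, which is the one idea your write-up is missing.
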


\begin{figure}[h]
\begin{tikzpicture}[line cap=round,line join=round,>=triangle 45,x=1.0cm,y=1.0cm]
\draw [line width=.1pt,dash pattern=on 3pt off 3pt, xstep=1.0cm,ystep=1.0cm] (-0.36,-1.24) grid (7.66,3.54);
\draw[->,color=black] (-0.36,0) -- (7.66,0);
\foreach \x in {,1,2,3,4,5,6,7}
\draw[shift={(\x,0)},color=black] (0pt,-2pt);
\draw[->,color=black] (0,-1.24) -- (0,3.54);
\foreach \y in {-1,1,2,3}
\draw[shift={(0,\y)},color=black] (2pt,0pt) -- (-2pt,0pt);
\draw [line width=1.2pt,color=qqqqff] (0,0)-- (3,-1);
\draw [line width=1.2pt,color=qqqqff] (3,-1)-- (7,2);
\draw [line width=1.2pt,color=qqqqff] (7,2)-- (4,3);
\draw [line width=1.2pt,color=qqqqff] (4,3)-- (0,0);
\draw [dash pattern=on 3pt off 3pt] (0,0)-- (7,2);
\draw [dash pattern=on 3pt off 3pt] (3,-1)-- (2.51,0.715);
\draw (4.38,1.5) node {$d_l$};
\draw (2.46,0.25) node {$\eta_s$};
\fill [color=black] (0,0) circle (1.5pt);
\draw[color=black] (-0.15,0.18) node {$O$};
\fill [color=black] (3,-1) circle (1.5pt);
\draw[color=black] (3.92,-1.02) node {$A(x,y)$};
\fill [color=black] (7,2) circle (1.5pt);
\draw[color=black] (7.2,2.2) node {$B$};
\fill [color=black] (4,3) circle (1.5pt);
\draw[color=black] (4.3,3.16) node {$C$};
\end{tikzpicture}
\caption{A LEP configuration that cannot exist}\label{F:1stq}
\end{figure}
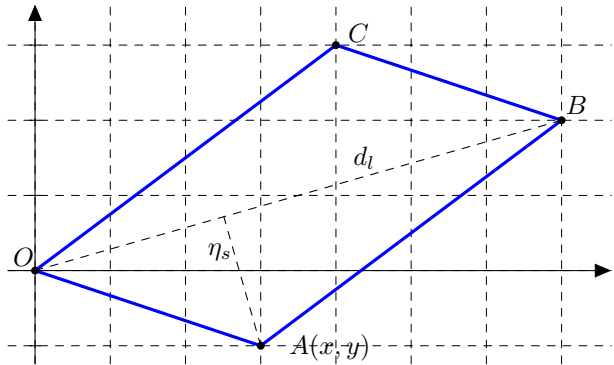

\begin{proof}
By applying if necessary a symmetry along the $x=y$  axis we suppose that $A$ lies in the 4th quadrant as in Figure \ref{F:1stq}. Since the diagonals are irrational the vertex $B$ cannot be on the $x$-axis. Moreover, as $P$ has an acute angle at $O$, the foot of the altitude from $A$ to $d_l$ also lies in the first quadrant. Hence, letting $A=(x,y)$, the distance from $A$ to $d_l$ is greater than $|y|$. Thus, by 
Theorem \ref{T:altbd}, $|y|< \eta_s\le  2\sqrt2$. Hence, as $y$ is an integer, $|y|$ is 1 or 2. Let $OA$ have length $a$. Then $a^2$ is $x^2+1$ or $x^2+2$. But this is impossible for $x>0$. 
\end{proof}

\begin{corollary} Every LEP can be moved by Euclidean motion to a LEP in the 1st quadrant with a vertex at the origin. 
\end{corollary}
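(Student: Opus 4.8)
The plan is to deduce the corollary from Proposition~\ref{P:ovaxis} after normalising the position of the long diagonal by a suitable lattice isometry. Note first that the moves I will use are all elements of $E(\Z^2)$ (integral translations together with the point-group symmetries), hence in particular Euclidean motions that carry lattice points to lattice points; thus each intermediate figure is again a LEP. Starting from an arbitrary LEP, I would first choose one of the two vertices lying on the long diagonal and translate it to the origin by an integral translation. Labelling the vertices $O,A,B,C$ in cyclic order with $O$ the chosen vertex, the opposite vertex $B$ is then the other endpoint of the long diagonal, so that $OB=d_l$.

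Next I would bring $B$ into a fundamental cone for the lattice point group. The point group of $E(\Z^2)$ is the dihedral group of order $8$, whose action partitions the plane into eight sectors of angle $\pi/4$; applying an appropriate composition of its generators (rotations by multiples of $\pi/2$ together with the reflection in $y=x$) moves $B$ into the closed cone $\{(x,y):0\le y\le x\}$. These are again lattice isometries, so the image is still a LEP and $OB$ remains its long diagonal.

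The one point requiring care is to confirm that $B$ then lies in the \emph{open} first quadrant, so that the segment $OB$ genuinely belongs to the first quadrant, as demanded by the hypothesis of Proposition~\ref{P:ovaxis}. Here I would invoke Proposition~\ref{P:irrat}: since $B$ is a lattice point, if $B$ lay on the $x$-axis then $d_l=|OB|$ would be a positive integer, contradicting the irrationality of the long diagonal. Hence the $y$-coordinate of $B$ is at least $1$, and $B=(p,q)$ with $p\ge q\ge 1$, so the whole segment $OB$ lies in the first quadrant.

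With $O$ at the origin and the long diagonal $OB$ lying in the first quadrant, Proposition~\ref{P:ovaxis} applies and shows that all four vertices $O,A,B,C$ lie in the first quadrant, which is the assertion of the corollary. I expect the only genuine obstacle to be the boundary issue just addressed—excluding $B$ from the axes—which is exactly where the irrationality of the diagonals (Proposition~\ref{P:irrat}) is indispensable; everything else is the routine reduction of a nonzero lattice vector to a fundamental domain of the point group.
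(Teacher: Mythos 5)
Your proof is correct and follows exactly the route the paper intends: the corollary is stated as an immediate consequence of Proposition~\ref{P:ovaxis}, with the normalisation by an integral translation and the point group of $E(\Z^2)$ left implicit, and your exclusion of $B$ from the $x$-axis via the irrationality of the diagonals (Proposition~\ref{P:irrat}) is the same device the paper itself uses inside the proof of Proposition~\ref{P:ovaxis}. You have merely written out the routine details the authors suppressed, and done so accurately.
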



\section{Pythagorean Equable Parallelograms}\label{S:PEPs} 

In general there are two ways in which parallelograms can be circumscribed by a rectangle so that the rectangle and parallelogram share a common diagonal; the rectangle may have sides extending the long sides of the parallelogram, or the short sides of the parallelogram; see Figure \ref{F:circums}.

\begin{figure}[H]
\centering
\begin{tikzpicture}[scale=.4][line cap=round,line join=round,>=triangle 45,x=1cm,y=1cm]
\draw [line width=1.2pt,color=qqqqff] (3,4)-- (0,0);
\draw [line width=1.2pt,color=qqqqff] (3,4)-- (11,4);
\draw [line width=1.2pt,color=qqqqff] (11,4)-- (8,0);
\draw [line width=1.2pt,color=qqqqff] (8,0)-- (0,0);
\draw [line width=0.8pt,dash pattern=on 1pt off 1pt] (0,0)-- (0,4);
\draw [line width=0.8pt,dash pattern=on 1pt off 1pt] (0,4)-- (3,4);
\draw [line width=0.8pt,dash pattern=on 1pt off 1pt] (8,0)-- (11,0);
\draw [line width=0.8pt,dash pattern=on 1pt off 1pt] (11,0)-- (11,4);
\draw [line width=0.8pt,dash pattern=on 1pt off 1pt] (3,4)-- (5.88,7.84);
\draw [line width=0.8pt,dash pattern=on 1pt off 1pt] (5.88,7.84)-- (11,4);
\draw [line width=0.8pt,dash pattern=on 1pt off 1pt] (0,0)-- (5.12,-3.84);
\draw [line width=0.8pt,dash pattern=on 1pt off 1pt] (5.12,-3.84)-- (8,0);
\draw [line width=0.8pt,dashed] (0,0)-- (11,4);
\end{tikzpicture}
\caption{Rectangles circumscribing a parallelogram}\label{F:circums}
\end{figure}
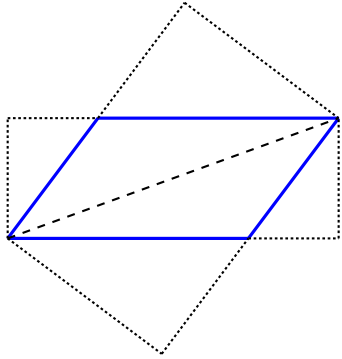

\begin{definition}\label{D:PEPs}
A LEP is said to be \emph{Pythagorean} if it is circumscribed by a rectangle having integer side lengths so that the rectangle and parallelogram share a common diagonal as in Figure \ref{F:circums}.
\end{definition}

This terminology is justified by the equivalent condition (b) in the following result.

\begin{proposition}\label{P:pythag} Consider a LEP $P$ having sides $a,b$ with $a\le b$. The following conditions are equivalent:
\begin{enumerate}
\item[\rm(a\rm)] $P$ is Pythagorean,
\item[\rm(b\rm)] $P$ is  circumscribed by a rectangle $R$ such that the complement of $P$ in $R$ is the union of two Pythagorean triangles,
\item[\rm(c\rm)] $a$ divides $2b$,
\item[\rm(d\rm)] $P$ can be drawn as a LEP with a horizontal pairs of sides.
\end{enumerate}
\end{proposition}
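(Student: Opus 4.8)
The plan is to fix coordinates for the drawing in which $P$ has a horizontal pair of sides and to read all four conditions off this single picture. Place one vertex at $O=(0,0)$, let the horizontal side have length $\ell\in\{a,b\}$ (so the slanted side has length $s$, the other of $a,b$), and write the remaining vertices as $(\ell,0)$, $(\ell+p,h)$ and $(p,h)$, where $h>0$ is the height and $p\ge 0$ the horizontal offset. Since the area of $P$ equals $2(a+b)=\ell h$ we get $h=2(a+b)/\ell$, and since $p^2+h^2=s^2$ a short computation gives $p=M/\ell$, where $M:=\sqrt{a^2b^2-4(a+b)^2}$ is a non-negative integer by Theorem~\ref{T:suff}. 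The circumscribing rectangle with corners $(0,0),(\ell+p,0),(\ell+p,h),(0,h)$ shares the diagonal $OB$ with $P$, and the complement of $P$ in it is exactly two congruent right triangles with legs $p,h$ and hypotenuse $s$. Thus the geometric objects of (a), (b) and (d) all live in this one picture (after a lattice-preserving rotation by a multiple of $90^\circ$, if $A$ happens to land on a vertical side of the rectangle).

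I would then prove the cycle (d)$\Rightarrow$(a)$\Rightarrow$(b)$\Rightarrow$(c)$\Rightarrow$(d). The implication (d)$\Rightarrow$(a) is immediate: a lattice drawing with horizontal sides has $\ell+p$ and $h$ integers, so its circumscribing rectangle has integer sides. For (a)$\Rightarrow$(b), assume WLOG that $A$ lies on the bottom side and $C$ on the top side of an integer rectangle $R$ with sides $W,H$; then $A=(x_A,0)$ with $x_A=|OA|$ a side length of $P$, hence an integer by Lemma~\ref{L:bcint}, and similarly for $C$. Each corner triangle then has one leg equal to $H$ and the other equal to $W$ minus a side length of $P$ (both integers), with hypotenuse a side of $P$; these are Pythagorean triangles, giving (b). For (b)$\Rightarrow$(c), the vertical leg of each such triangle equals $h=2(a+b)/\ell$, which must therefore be an integer, so $\ell\mid 2(a+b)$; that is, $a\mid 2b$ if $\ell=a$, and $b\mid 2a$ if $\ell=b$.

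The last implication (c)$\Rightarrow$(d) carries the real content. Assuming $a\mid 2b$, take $\ell=a$; then $h=2(a+b)/a$ is an integer. The delicate point is that $p=M/a$ is then automatically an integer as well: it is rational (being $M/a$ with $M\in\Z$), while its square $p^2=b^2-h^2$ is an integer, so by Remark~\ref{L:int} $p$ itself is an integer. Hence $(0,0),(a,0),(a+p,h),(p,h)$ is a lattice drawing of $P$ with horizontal sides, establishing (d). To close the cycle I must also reconcile the two possible values of $\ell$ in (b)$\Rightarrow$(c): if only $\ell=b$ arises I obtain $b\mid 2a$ rather than $a\mid 2b$, so I invoke the elementary fact that, since $a\le b$, writing $2a=bt$ forces $t\le 2$, whence $b\in\{2a,a\}$ and in either case $a\mid 2b$.

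I expect the main obstacle to be the bookkeeping around the two circumscribing configurations (short sides versus long sides horizontal) and ensuring condition (c) is symmetric enough to cover both; the number-theoretic step $b\mid 2a\Rightarrow a\mid 2b$ disposes of this cleanly. A secondary subtlety, deserving only a remark, is the degenerate case $M=0$, where $P$ is one of the equable rectangles ($4\times4$ or $3\times6$): here $p=0$ and the two Pythagorean triangles collapse, yet (c) still holds ($4\mid 8$, $3\mid 12$) and every condition is satisfied trivially, so these rectangles are consistent with the statement. The genuinely non-trivial ingredients throughout are the integrality of $M=\sqrt{a^2b^2-4(a+b)^2}$ from Theorem~\ref{T:suff} and the rationality-forces-integrality principle of Remark~\ref{L:int}, which together make the passage from $h\in\Z$ to $p\in\Z$ work.
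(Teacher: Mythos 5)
Your proposal is correct and rests on exactly the same ingredients as the paper's proof --- the height formula $h=2(a+b)/\ell$ (Lemma \ref{L:widths}), the integrality of the horizontal offset $\sqrt{a^2b^2-4(a+b)^2}/a$ obtained from Theorem \ref{T:suff} together with Remark \ref{L:int}, and the dichotomy $b\in\{a,2a\}$ when $b\mid 2a$ --- merely traversing the cycle in the opposite direction, (d)$\Rightarrow$(a)$\Rightarrow$(b)$\Rightarrow$(c)$\Rightarrow$(d), where the paper does (a)$\Rightarrow$(d)$\Rightarrow$(c)$\Rightarrow$(b)$\Rightarrow$(a), so the weight sits in your (c)$\Rightarrow$(d) instead of the paper's (c)$\Rightarrow$(b). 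Your explicit handling of the degenerate case $M=0$ (the $4\times4$ and $3\times6$ rectangles, where the corner triangles collapse) is a small point the paper leaves implicit, but otherwise the two arguments coincide.
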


\begin{figure}[H]
\begin{tikzpicture}[scale=.38][line cap=round,line join=round,>=triangle 45,x=1cm,y=1cm]
\draw [line width=1.2pt,color=qqqqff] (18,0)-- (22,0);
\draw [line width=1.2pt,color=qqqqff] (22,0)-- (30,8);
\draw [line width=1.2pt,color=qqqqff] (30,8)-- (26,8);
\draw [line width=1.2pt,color=qqqqff] (26,8)-- (18,0);
\draw [line width=0.8pt,dash pattern=on 1pt off 1pt] (22,0)-- (30,0);
\draw [line width=0.8pt,dash pattern=on 1pt off 1pt] (30,0)-- (30,8);
\draw [line width=0.8pt,dash pattern=on 1pt off 1pt] (18,0)-- (18,8);
\draw [line width=0.8pt,dash pattern=on 1pt off 1pt] (18,8)-- (26,8);
\draw [->,line width=0.8pt] (17.2,5) -- (17.2,8);
\draw [->,line width=0.8pt] (17.2,2.9) -- (17.2,0);
\draw [->,line width=0.8pt] (22,8.6) -- (18,8.6);
\draw [->,line width=0.8pt] (23,8.6) -- (26,8.6);
\draw (17.2,4) node {$h_l$};
\draw (22.5,8.6) node {$x$};
\draw (26.7,4) node {$b$};
\draw (20,-.6) node {$a$};
\draw [line width=0.8pt,dashed] (18,0)-- (30,8);
\end{tikzpicture}
\caption{Integer $h_l$ $\implies$ integer $x$ }\label{F:hl}
\end{figure}
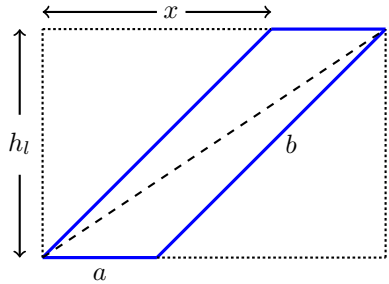

\begin{proof}[Proof of Proposition \ref{P:pythag}]
Suppose that $P$ is a LEP with vertices 
$O, A,B,C$,
in cyclic order, and that $P$ has  sides $a,b$ with $a\le b$.

(a) $\implies$ (d). Suppose that $P$ is Pythagorean and that the distance between sides $OA$ and $BC$ is an integer, $h$ say. Rotate and translate $P$ so that it has the side $OA$ along the positive $x$-axis, with the vertex  $O$ at the origin. We want to verify that moved in this way, $P$ is still a LEP. Since $P$ has integer side lengths, $A$ has integer coordinates. By assumption, $B,C$ have integer $y$-coordinate, equal to $h$. From the Pythagorean hypothesis, $B$ has integer $x$-coordinate, $z$ say. Then $x$-coordinate of $C$ is $z$ minus the $x$-coordinate of $A$. So $P$ is a LEP.

(d) $\implies$ (c). If a LEP $P$ has a horizontal pairs of sides, then the fact that the coordinates of the vertices are integers implies that the distance $h$ between the horizontal sides is an integer. So either $h_l$ or $h_s$ is an integer.  If $h_s$ is an integer, then as $h_s=2+\frac{2a}b$ by Lemma~\ref{L:widths}(a), and $a\le b$, we have either $b=a$ or $b=2a$. In either case, by Lemma~\ref{L:widths}(a) again, $h_l=2+\frac{2b}a$ is also an integer. Thus $a$ divides $2b$.

(c) $\implies$ (b). Since $h_l=2+\frac{2b}a$, hence $h_l$ is an integer. Consider the rectangle $R$ that circumscribes $P$, sharing a common diagonal with $P$ and having sides that are extensions of the sides of $P$ of length $a$, as in Figure \ref{F:hl} (but not necessarily with the sides $a$ horizontal). So $R$ has sides $h_l$ and $ a+x$, for some $x$, and we are required to show that $x$ is an integer. By Lemma~\ref{L:widths}(a), we have
\[
x^2=b^2-h_l^2=b^2-\frac{4(a+b)^2}{a^2}=\frac{a^2b^2-4(a+b)^2}{a^2},
\]
so $x=\frac{\sqrt{a^2b^2-4(a+b)^2}}a$, which is a rational by Theorem \ref{T:suff}. Since $h_l=2+\frac{2b}a$ is  an integer, $2b=ai$ for some integer $i$. Then 
\[
x^2=\frac{a^2b^2-4(a+b)^2}{a^2}=b^2-(2+i)^2,
\]
 which is an integer. So $x$ is rational and the square root of an integer. Hence $x$ is an integer.

(b) $\implies$ (a). This is immediate.
\end{proof}

\begin{remark}
The smallest non-Pythagorean LEP has area 180  and sides 25 and 65; it can be constructed in the first quadrant with vertices $(0,0),(25,60),(32,84),(7,24)$.
\end{remark}

\begin{proof}[Proof of Theorem \ref{T:PEPs}]
Consider a Pythagorean LEP $P$ with sides $a,b$. By Proposition \ref{P:pythag}, $a$ divides $2b$, and by Theorem \ref{T:345}, $\gcd(a,b)=3,4$ or $5$. If $a$ is odd, then $a$ divides $b$, so $a=\gcd(a,b)$ and $a$ is 3 or 5. If $a$ is even, say $a=2a'$, then $a'$ divides $b$ so $a'$ divides $\gcd(a,b)$. Thus $a'$ is $2,3,4$ or $5$ and thus $a$ is $4,6,8$ or $10$.
But as observed in Remark \ref{R:div}, if $\gcd(a,b)=4$  neither $a$ nor $b$ is divisible by $8$.  So $a$ is 3,4,5,6 or 10, as claimed in Theorem \ref{T:PEPs}. In remains to exhibit a family of such  Pythagorean LEPs in each case.

In the notation of previous sections, $ab= k(m^2+n^2), a+b=kmn$
for  positive integers $k,m,n$  
where $m,n$ are relatively prime with $m<n$. For convenience, let us restate \eqref{E:xb}:
\[
km^2+kn^2+a^2=kamn.
\]
Moreover, as we saw in the proof of Theorem~\ref{T:345}, $k$ is either 5, 8 or 9, corresponding to $\gcd(a,b)=3,4,5$ respectively.

Note that for $a=3$ we have $k=9$  and \eqref{E:xb} gives $m^2+n^2+1=3mn$. Setting $x=n-m,y=m+n$ we obtain the Pell-like equation $y^2-5x^2 =4$. Conversely, note that if $(x,y)$ is a solution to this equation, then $x,y$ necessarily have the same parity and we can set $m=\frac{y-x}2,n=\frac{y+x}2 $ to obtain an integer solution $(m,n)$ to $m^2+n^2+1=3mn$. Then  $b=\frac{k}a (m^2+n^2)=\frac{3(x^2+y^2)}{2}$.

For $a=4$ we have $k=8$  and \eqref{E:xb} gives $m^2+n^2+2=4mn$. Note then that $m$, $n$ must have same parity. Setting $2x=n-m$, $2y=n+m$ we obtain the Pell equation $y^2 -3x^2=1$.
Conversely, note that if $(x,y)$ is a solution to this equation, then  we can set $m=y-x,n=y+x $ to obtain an integer solution $(m,n)$ to $m^2+n^2+2=4mn$. 
Then  $b=\frac{k}a (m^2+n^2)=4(x^2+y^2)$.

For $a=5$ we have $k=5$  and \eqref{E:xb} gives $m^2+n^2+5=5mn$. Setting $x=n-m,y=m+n$ we obtain the Pell-like equation $3y^2-7x^2 =20$. Conversely, note that if $(x,y)$ is a solution to this equation, then $x,y$ necessarily have the same parity and we can set $m=\frac{y-x}2,n=\frac{y+x}2 $ to obtain an integer solution $(m,n)$ to $m^2+n^2+5=5mn$. 
Then  $b=\frac{k}a (m^2+n^2)=\frac{x^2+y^2}2$.

Similarly, for $a=6$ (resp.~$a=10)$, we have $k=9$ (resp.~$k=5$) and \eqref{E:xb} gives $m^2+n^2+4=6mn$ (resp.~$m^2+n^2+20=10mn$). Note that $m,n$ necessarily have the same parity and we can set $x=\frac{n-m}2,y=\frac{m+n}2 $. This gives the Pell (resp.~Pell-like) equation $y^2-2x^2 =1$ (resp.~$2y^2-3x^2 =5$). Conversely, note that if $(x,y)$ is a solution to this equation, then we can set $m=y-x,n=y+x $ to obtain an integer solution $(m,n)$ to $m^2+n^2+4=6mn$ (resp.~$m^2+n^2+20=10mn$).
Using $b=\frac{k}a (m^2+n^2)$, for $a=6$ we have $b=3(x^2+y^2)$ and for $a=10$ we have $b=x^2+y^2$.
\end{proof}


We complete this section by connecting the above solution families F1-F5 to the material in the earlier Sections. In particular, we locate the Pythagorean equable parallelograms in the trees of LEPs of Figure \ref{F:tree35} and  \ref{F:tree4}.

{\bf F1: \boldmath$a=3$}. The solutions $(x,y)$ to the Pell-like equation $y^2 - 5x^2=4$ are well known to be $(L_{2i},F_{2i})$, starting at $(F_0,L_0)=(0,2),(F_2,L_2)=(1,3)$, where $F$ stands for the Fibonacci numbers and $L$ for the Lucas numbers. Both $L_{2i}$ and $F_{2i}$ satisfy the recurrence relation  $z_{i}=3z_{i-1}-z_{i-2}$, with different initial conditions. So the first few solutions of  equation $y^2-5x^2 =4$ are: $(0,2), (1,3), (3,7), (8,18)$, $(21,47)$. The resulting $b$-values, given by $b=\frac{3(x^2+y^2)}{2}$, are: 6, 15, 87, 582, 3975.
We saw the corresponding LEPs $(3,b)$ previously in Example \ref{Eg:3}; in Figure \ref{F:tree35} they lie on the central horizontal branch to the right of the fundamental solution $(3,6)$. As we saw,  these $b$-values are generated recursively by the formula $f(b)=\frac{7}2b-6+\sqrt{\frac{45}4 b^2-54b-81}$, or alternatively, using the recurrence relation  $b_i=8 b_{i-1} - 8b_{i-2} + b_{i-3}$.
Notice that by Theorem \ref{T:suff}, these $b$ values are precisely those numbers $b=3i$ for which $5i^2-8i-4$ is a square.

{\bf F2: \boldmath$a=4$}. The solutions $(x,y)$ to  Pell's equation $y^2 -3x^2=1$ are well known to be given by the recurrence relation  $z_i = 4z_{i-1} - z_{i-2}$. See OEIS entries   A001075 and A001353 \cite{OEIS}. The first few solutions of $y^2 -3x^2=1$ are: $(0,1),(1,2),(4,7),(15,26),(56,97)$. The resulting $b$-values, given by $b=4(x^2+y^2)$, are: 4, 20, 260, 3604, 50180. We saw the corresponding LEPs $(4,b)$ in Example \ref{Eg:4}; in Figure \ref{F:tree4} they lie on the central horizontal branch to the right of the fundamental solution $(4,4)$.
The $b$-values can be generated recursively by the formula $f(b)=7b-8+4\sqrt{3b^2-8b-16}$, or by  the recurrence relation  $b_i=15 b_{i-1} - 15b_{i-2} + b_{i-3}$.
Notice that by Theorem \ref{T:suff}, these $b$ values are precisely those numbers $b=4i$ for which $3i^2-2i-1$ is a square.

{\bf F3: \boldmath$a=5$}. The Pell-like equation $3y^2-7x^2 =20$ is less common. Its solutions $(x,y)$ are given by the recurrence relation  $z_i = 5z_{i-2} - z_{i-4}$.
The first few solutions are: $(1,3),(2,4),(7,11),(11,17),(34,52),(53,81),(163,249)$. The resulting $b$-values, given by $b=\frac{x^2+y^2}2$, are: 5, 10, 85, 205, 1930, 4685, 44285.
The position of the corresponding LEPs $(5,b)$ is more complicated than what we saw for $a=3$ and $a=4$. Every second solution, starting at $(5,5)$, appeared  in Example \ref{Eg:5}; in Figure \ref{F:tree35} they lie on the central horizontal branch to the right of the fundamental solution $(5,5)$. These $b$-values can be generated recursively by the formula 
\[
f(b)=\frac{23}2b-10 +\sqrt{\frac{21\cdot 25}4b^2-250b-625},
\]
or by  the recurrence relation  $b_i=24 b_{i-1} - 24b_{i-2} + b_{i-3}$. The other solutions are on the central horizontal branch of Figure \ref{F:tree35} to the left of the fundamental solution $(5,5)$. These $b$-values are generated recursively by the same formula (and recurrence relation) but starting at $b=10$. 

Note that in the proof of Theorem \ref{T:PEPs} the equation $3y^2-7x^2 =20$ was derived from  $m^2+n^2+5=5mn$. This latter equation is well known; indeed, it is easy to see that for the solutions $m,n$, the first component comprise those numbers $m$ for which $21m^2-20$ is a square. See entry A237254 in \cite{OEIS}.
By Theorem \ref{T:suff}, the $b$ values for $a=5$ are precisely those numbers $b=5i$ for which $21i^2-8i-4$ is a square.

{\bf F4: \boldmath$a=6$}. The  solutions $(x,y)$ to  Pell's equation $y^2 -2x^2=1$ are well known to be given by the recurrence relation   $z_i = 6z_{i-1} - z_{i-2}$. 
See OEIS entries  A001541, A001542 \cite{OEIS}. The first few solutions are: $(0,1),(2,3),(12,17),(70,99),(408,577)$. The resulting $b$-values, given by $b=3(x^2+y^2)$, are: 3, 39, 1299, 44103, 1498179.
The corresponding LEPs $(6,b)$  occur on the central horizontal branch of Figure \ref{F:tree35}, to the left of the fundamental solution $(3,6)$.
To generate the $b$-values of the solutions on this branch we can use Equation \ref{E:c} with $a=6$. These $b$-values can be generated recursively by the formula 
\[f(b)
= 17b-12+6\sqrt{9b^2-(6+b)^2},
\]
and by the recurrence relation  $b_i=35 b_{i-1} - 35b_{i-2} + b_{i-3}$.
By Theorem \ref{T:suff}, these $b$ values are precisely those numbers $b=3i$ for which $2i^2-i-1$ is a square.

{\bf F5: \boldmath$a=10$}. Like the $a=5$ case, the Pell-like equation $2y^2-3x^2 =5$ is not very common. Its solutions $(x,y)$ are given by the recurrence relation  $z_i = 10z_{i-2} - z_{i-4}$.
The first few solutions are: $(1,2),(3,4),(13,16),(31,38),(129,158),(307,376)$. The resulting $b$-values, given by $b=x^2+y^2$, are: 5, 25, 425, 2405, 41605, 235625.
The corresponding LEPs $(10,b)$ occur on the first vertical  branch  to the left of the fundamental solution $(5,5)$ in Figure \ref{F:tree35}. Every second solution, starting at $(10,5)$ appears above the central horizontal axis. To generate these $b$-values we can use Equation \ref{E:c} with $a=10$. This gives the equation
\[f(b)
=  49b-20+10\sqrt{25b^2-(10+b)^2},
\]
and by \eqref{E:crel}, the recurrence relation  $b_i=99 b_{i-1} - 99b_{i-2} + b_{i-3}$.
The other solutions appears below the central horizontal axis. Their $b$-values  are generated recursively by the same formula (and recurrence relation) but starting at $b=25$.

Note that in the proof of Theorem \ref{T:PEPs} the equation $2y^2-3x^2 =5$ was derived from  $m^2+n^2+20=10mn$. This latter equation is well known; indeed, it is easy to see that for the solutions $m,n$, the first component comprise those numbers $m$ for which $6m^2-5$ is a square. See entry  A080806 in \cite{OEIS}.
By Theorem \ref{T:suff}, the $b$ values for $a=10$ are precisely those numbers $b=5i$ for which $6i^2-i-1$ is a square.

\begin{remark}
The Pell-like equations in families F1 -- F5 can all be put in the following form:
\begin{equation}\label{E:eq}
cy^2-(c+4)x^2=d,
\end{equation}
 where $c,d$ are integers and $d$ is even if $c$ is odd. Table~\ref{table} gives the values of $c,d$ for the five families.

A  direct calculation shows that if $(x,y)$ is a solution to \eqref{E:eq}, then another solution is given by
\[
\begin{pmatrix}
x'\\
y'
\end{pmatrix}=\frac12\begin{pmatrix}
c+2&c\\
c+4&c+2
\end{pmatrix}\begin{pmatrix}
x\\
y
\end{pmatrix}=\begin{pmatrix}
x+\frac{c(x+y)}2\\
2x+y+\frac{c(x+y)}2
\end{pmatrix}.
\]
This was proved by R\'ealis for $d=\pm 4$  \cite{Re} (see also \cite[p.~407]{Di}), but it holds in the general case. (Note that if $c$ is odd, then from  \eqref{E:eq}, as $d$ is even, the  numbers $x,y$ must have the same parity, so $x',y'$ are integers).  Furthermore, provided \eqref{E:eq} has one solution, $(x_0,y_0)$ say, this process gives a sequence of solutions $(x_{i},y_{i})$, and it is easy to verify that it satisfies the following recurrence relation:
\[
(x_{i},y_{i})= (c+2) (x_{i-1},y_{i-1})-(x_{i-2},y_{i-2}).
\]
Note that \eqref{E:eq} does not  have a solution for all $c$ and $d$. For example, for $c=3, d=4$, the equation is $3y^2-7x^2=4$. Modulo 3 this is $-x^2\equiv 1$, which has no solution.
 
\begin{table}[h]
\begin{center}
\begin{tabular}{c|ccccc}
  \hline
  Family & F1 & F2  & F3& F4& F5 \\
  $c$ & 1 & 2  & 3& 4& 8 \\
  $d$ & 4 & 2  & 20& 4& 20 \\
  \hline
\end{tabular}
\end{center}
\caption{\ }
\label{table}
\end{table}

\end{remark}


\section*{Appendix: The Equable Triangles Theorem} 

The Equable Triangles Theorem says that there are only 5 equable triangles with integer side lengths. For the early history of the theorem, Dickson \cite[pp.~195, 199]{Di} cites Whitworth and Briddle in Math. Quest. Educational Times 5, 1904, 54--56, 62--63, but we have been unable to locate this/these works. Equable triangles have been investigated in several works \cite{Kil,Ma,Sm,Wie}. Proofs of the Equable Triangles Theorem are given in  \cite{Fo} and \cite[pp.15--16]{Br}. In this appendix, the proof we supply follows the argument in \cite{Br}, but pushes it a little further so that the conclusion can be readily done without resort to a computer.

For equable triangles with sides $a,b,c$, Heron's formula gives
\[
(a+b+c)(-a+b+c)(a-b+c)(a+b-c)=16(a+b+c)^2.
\]
Let $u=-a+b+c, v=a-b+c,w=a+b-c$, so that $a=\frac{v+w}2,b=\frac{u+w}2,c=\frac{u+v}2$.
Then our equation is 
\begin{equation}\label{E:1}
uvw=16(u+v+w),
\end{equation}
and we look for solutions $u,v,w$, all of the same parity. Further, we may assume $u\le v\le w$. 
Note that, since $u,v,w$, have the same parity, so  from \eqref{E:1}, $u,v,w$ are necessarily even. Let $u=2x,v=2y,w=2z$, so $a=y+z,b=x+z,c=x+y$. Then
$xyz=4(x+y+z)$.
Thus
\[
y\le z=\frac{4(x+y)}{xy-4},
\]
so
$xy^2-8y-4x\le 0$.
Hence 
\[
x\le y\le \frac{4+2\sqrt{4+x^2}}x\]
so
$x^2\le 4+2\sqrt{4+x^2}$.
Hence
$(x^2-4)^2\le 4(4+x^2)$.
Thus
$x^4-12x^2\le 0$,
which gives $x\le 3$. Then 
\[
y\le \frac{4+2\sqrt{4+x^2}}x\le 4+2\sqrt{4+1},
\]
 since the function $\frac{4+2\sqrt{4+x^2}}x$ is decreasing for positive $x$. So, as $y$ is an integer,  $y\le 8$. Then, considering the values $x\le 3$, $y\le 8$ and 
$z=\frac{4(x+y)}{xy-4}$, we find the following integer values for $x,y,z$:
\[
1,5,24\qquad
1,6,14\qquad
1,8,9\qquad
2,3,10\qquad
2,4,6,
\]
which give the values for $a,b,c$:
\[
6,25,29\qquad
7,15,20\qquad
9,10,17\qquad
5,12,13\qquad
6,8,10.
\]

\begin{figure}
\definecolor{qqqqff}{rgb}{0,0,1}
\definecolor{ttzzqq}{rgb}{0.2,0.6,0}
\begin{tikzpicture}[scale=.75][line cap=round,line join=round,>=triangle 45,x=1.0cm,y=1.0cm]
\clip(-0.68,-0.66) rectangle (6.92,4.22);
\draw[color=ttzzqq,fill=ttzzqq,fill opacity=0.1] (6,0.42) -- (5.58,0.42) -- (5.58,0) -- (6,0) -- cycle; 
\fill[color=qqqqff,fill=qqqqff,fill opacity=0.1] (0,0) -- (3,0) -- (6,4) -- cycle;
\draw (0,0)-- (3,0);
\draw (3,0) -- (6,4) node [midway,below, sloped] (3,0) {$b\,=\,5k$};
    \draw (6,4)-- (0,0);
\draw [dotted] (3,0)-- (6,0);
\draw [dotted] (6,0)-- (6,4);
\draw (4.32,0.0) node[anchor=north west] {$3k$};
\draw (6.14,2.42) node[anchor=north west] {$4k$};
\draw (1.34,0.0) node[anchor=north west] {$a$};
\draw (2.64,2.6) node[anchor=north west] {$c$};
\draw [dotted,color=qqqqff] (0,0)-- (3,0);
\draw [dotted,color=qqqqff] (3,0)-- (6,4);
\draw [dotted,color=qqqqff] (6,4)-- (0,0);
\begin{scriptsize}
\fill [color=black] (0,0) circle (1.5pt);
\draw[color=black] (-0.24,-0.46) node {$B$};
\fill [color=black] (3,0) circle (1.5pt);
\draw[color=black] (3.42,-0.46) node {$C$};
\fill [color=black] (6,4) circle (1.5pt);
\draw[color=black] (6.32,3.76) node {$A$};
\fill [color=black] (6,0) circle (1.5pt);
\draw[color=black] (6.32,0.26) node {$D$};
\end{scriptsize}
\end{tikzpicture}\caption{Equable triangles}\label{F:tri}
\end{figure}
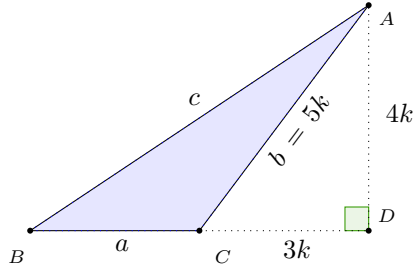

This completes the proof. Note that the last two  are Pythagorean triples, while intriguingly, for the other three cases, the corresponding triangle is the complement of a Pythagorean triangle of the form $3k,4k,5k$ in a larger Pythagorean triangle; see Figure \ref{F:tri}. The three cases correspond to the values $k=2,3,5$.


\begin{thks} The authors are very grateful to the referee whose thoughtful suggestions improved the presentation of the paper. The first author would like to warmly thank John Steinig for introducing him to number theory and revealing its intriguing splendour many years ago.
\end{thks}


\bibliographystyle{amsplain}
{}

\end{document}